\newtheorem{example-numbered}{Example}[section]
\newcommand{\Hol}{\mathrm{Hol}}
\newcommand{\hol}{\mathrm{hol}}
\newcommand{\Hom}{\mathrm{Hom}}
\newcommand{\GL}{\mathrm{GL}}
\newcommand{\Aut}{\mathrm{Aut}}
\newcommand{\PSL}{\mathrm{PSL}}
\newcommand{\SO}{\mathrm{SO}}
\newcommand{\SU}{\mathrm{SU}}
\newcommand{\SL}{\mathrm{SL}}
\newcommand{\rank}{\mathrm{rank}}
\DeclareMathOperator{\im}{im}
\begin{document}

\title{The Geometry of Synchronization Problems and Learning Group Actions\thanks{TG gratefully acknowledges partial support from Simons Math+X Investigators Award 400837, DARPA D15AP00109, NSF IIS 1546413, and an AMS-Simons Travel Grant; JB would like to acknowledge the support for this work by the EPSRC grants EP/I016945/1 and EP/N014189/1; SM would like to acknowledge support from NSF DMS 16-13261,  NSF IIS 1546331, NSF DMS-1418261, NSF IIS-1320357, NSF DMS-1045153, and HFSP RGP0051/2017.}
}


\author{Tingran Gao         \and
  Jacek Brodzki       \and
  Sayan Mukherjee
}

\authorrunning{Tingran Gao,  Jacek Brodzki, and Sayan Mukherjee} 

\institute{Tingran Gao \at
              Committee on Computational and Applied Mathematics,\\
              Department of Statistics,\\
              University of Chicago, Chicago, IL 60637, USA\\
              \email{tingrangao@galton.uchicago.edu}           
           \and
           Jacek Brodzki \at
           Department of Mathematical Sciences,\\
           University of Southampton, Southampton SO17 1BJ, \\
           \email{j.brodzki@soton.ac.uk}
           \and
           Sayan Mukherjee \at
           Departments of Statistical Science, Mathematics, Computer Science, and Bioinformatics \& Biostatistics, \\
           Duke University, Durham, NC 27708, USA\\
           \email{sayan@stat.duke.edu}
}

\date{Received: 16 April 2018 / Accepted: 29 April 2019}

\maketitle

\begin{abstract}
We develop a geometric framework, based on the classical theory of fibre bundles, to characterize the cohomological nature of a large class of \emph{synchronization-type problems} in the context of graph inference and combinatorial optimization.
We identify each synchronization problem in topological group $G$ on connected graph $\Gamma$ with a flat principal $G$-bundle over $\Gamma$, thus establishing a classification result for synchronization problems using the representation variety of the fundamental group of $\Gamma$ into $G$. We then develop a twisted Hodge theory on flat vector bundles associated with these flat principal $G$-bundles, and provide a geometric realization of the \emph{graph connection Laplacian} as the lowest-degree Hodge Laplacian in the twisted de Rham-Hodge cochain complex. Motivated by these geometric intuitions, we propose to study the problem of \emph{learning group actions} --- partitioning a collection of objects based on the local synchronizability of pairwise correspondence relations --- and provide a heuristic synchronization-based algorithm for solving this type of problems. We demonstrate the efficacy of this algorithm on simulated and real datasets.
\keywords{synchronization problem \and fibre bundle \and holonomy \and Hodge theory \and graph connection Laplacian}
\subclass{05C50 \and 62-07 \and 57R22 \and 58A14}
\end{abstract}

\section{Introduction}
\label{sec:introduction}

Over the past century, concepts from differential geometry have had a strong impact on probability theory, statistical inference, and machine learning \cite{Chentsov66,Cramer46,Fisher1922,Malhanobis36,Rao45}. Two central geometric concepts used in these fields have been differential operators (e.g. the Laplace--Beltrami operator \cite{BelkinNiyogi2004ML}) and Riemannian metrics (e.g. Fisher information \cite{Fisher1922}). In particular, the research program of manifold learning studies dimension reduction through the lens of differential-geometric quantities and invariants, and designs data compression algorithms that preserve intrinsic geometric information such as geodesic distances \cite{isomap}, affine connections \cite{lle}, second fundamental forms \cite{donoho03}, and heat kernels \cite{LapEigMaps2003,CoifmanLafonLMNWZ2005PNAS1,CoifmanLafonLMNWZ2005PNAS2}. The underlying hypothesis of these techniques is that the data lie approximately on a smooth manifold (often embedded in an ambient Euclidean space), a scenario facilitating inference due to smoothly controllable transitions between observed and unseen data. For practical purposes, discrete analogues of the inherently smooth theory of differential geometry have also been explored in fields ranging from geometry processing \cite{DDG2008,Crane:2013:DGP}, finite element methods \cite{DRR2006FEEC}, to spectral graph theory \cite{Chung1997} and diffusion geometry \cite{CoifmanLafon2006,SingerWu2012VDM}.

Beyond the manifold assumption, geometric objects can be handled with ``softer'' tools such as topology: topological data analysis techniques \cite{CZCG2005,EH2010Book} have been developed to study datasets based on their persistent homology. For smooth manifolds, it is well known that the singular cohomology and de Rham cohomology are isomorphic, indicating that some topological information can be read off from the differential structure of geometric objects. Carrying the de Rham theory beyond the manifold setting has attracted the interest of geometers and physicists: synthetic differential geometry \cite{Kock1982,Kock2006SDG} defines group-valued differential forms on ``formal manifolds'' (generalized notion of smooth spaces for which infinitesimal neighborhoods are specified axiomatically), based on which an analog of the classical de Rham theory can be established \cite{FelixLavendhomme1990}; noncommutative differential geometry \cite{Connes1985,Connes2000NG,Madore1999Book} builds upon the observation that much of differential geometry can be formulated in terms of the algebra of smooth functions defined on smooth manifolds, and replaces this algebra with noncommutative ones --- differential forms can then be extended to ``noncommutative spaces'' along with homology and cohomology of much more general objects. Discrete analogs of the Hodge Laplacian, a second order differential operator closely related to de Rham theory, have been proposed for simplicial complexes and graphs \cite{JLYY2011,Lim2015,PR2016,PRT2015,SKM2014}; its non-commutative counterpart for $1$-forms on graphs have recently been explored in \cite{Majid2013NRGG}.

Bridging recent developments applying differential geometry and topology in probability and statistical sciences, the problem of \emph{synchronization} \cite{BSS2013,WangSinger2013} arise in a variety of fields in computer science (e.g. computer vision \cite{BCSZ2014} and geometry processing \cite{KKSBL2015}), signal processing (e.g. sensor network localization \cite{CLS2012}), combinatorial optimization (e.g. non-commutative Grothendieck inequality \cite{BKS2016}), and natural sciences (e.g. cryo-electron microscopy \cite{BCS2015NUG,SS2012,SZSH2011} and geometric morphometrics \cite{Gao2015Thesis}). The data given in a synchronization problem include a connected graph that encodes similarity relations within a collection of objects, and pairwise correspondences --- often realized as elements of a transformation group $G$ --- characterizing the nature of the similarity between a pair of objects linked directly by an edge in the relation graph. The general goal of the problem is to adjust the pairwise correspondences, which often suffer from noisy or incomplete measurements, to obtain a globally consistent characterization of the pairwise relations for the entire dataset, in the sense that unveiling the transformation between a pair of objects far-apart in the relation graph can be done by composing transformations along consecutive edges on a path connecting the two objects, and the resulting composed transformation is independent of the choice of the path. (A precise definition of a synchronization problem will be provided below; see Section~\ref{sec:fibre-bundle-interpr}.) This paper stems from our attempt to gain a deeper understanding of the geometry underlying synchronization problems.
Whereas previous works \cite{SingerWu2012VDM,SingerWu2013} in this direction build upon manifold assumptions, the point of view we adopt here is synthetic and noncommutative: we will see that inference is possible due to rigidity rather than smoothness.

The remainder of this section gives a formal definition of synchronization problems, as well as a geometric interpretation in the language of fibre bundles. The fibre bundle interpretation is elementary but has not been presented in the literature of synchronization problems, to our knowledge. We then state the main results, discuss related works, and describe the organization of the paper.

\begin{table}[htbp]
\caption{Notations used throughout this paper}
\begin{center}
\begin{tabular}{c c p{12cm} }
\toprule
$\Gamma$ & & graph\\
$V$ & & vertex set of $\Gamma$\\
$E$ & & edge set of $\Gamma$\\
$n$ or $\left| V \right|$ & & number of vertices of the graph $\Gamma$\\
$m$ or $\left| E \right|$ & & number of edges of the graph $\Gamma$\\
$w_{ij}$ & & weight on edge $\left( i,j \right)\in E$\\
$d_i$ & & weighted degree on vertex $i\in V$, defined as $d_i=\sum_{j:\left( i,j \right)\in E}w_{ij}$\\
$G$ & & topological group\\
$e$ & & identity element of $G$\\
$G_{\delta}$ & & group $G$ equipped with discrete topology\\
$\mathbb{K}$ & & scalar field $\mathbb{R}$ or $\mathbb{C}$\\
$F$ & & vector space on $\mathbb{K}$ that is a representation space of $G$\\
$d$ or $\dim F$ & & dimension of the vector space $F$\\
$\left\langle \cdot,\cdot \right\rangle_F$ & & inner product on $F$\\
$\mathfrak{U}=\left\{ U_i\mid 1\leq i\leq \left| V \right| \right\}$ & & open cover of $\Gamma$ in which $U_i$ is the star of vertex $i\in V$\\
$C^0 \left( \Gamma; G \right)$ & & $G$-valued $0$-cochain on $\Gamma$, or the set of all vertex potentials on $\Gamma$\\
$C^1 \left( \Gamma; G \right)$ & & $G$-valued $1$-cochain on $\Gamma$, or the set of all edge potentials on $\Gamma$\\
$C^0 \left( \Gamma; F \right)$ & & $F$-valued $0$-cochain on $\Gamma$\\
$\mathscr{B}_{\rho}$ & & synchronization principal bundle (a flat principal $G$-bundle on $\Gamma$) associated with $\rho\in C^1 \left( \Gamma; G \right)$\\
$\mathscr{B}_{\rho}\left[ F \right]$ & & flat associated $F$-bundle of $\mathscr{B}_{\rho}$\\
$\hol_{\rho}$ & & holonomy homomorphism on $\mathscr{B}_{\rho}$, from $\pi_1 \left( \Gamma \right)$ to $G$\\
$\Hol_{\rho} \left( \Gamma \right)$ & & holonomy of the synchronization principal bundle $\mathscr{B}_{\rho}$\\
$\Omega_i^0 \left( \Gamma;\mathscr{B}_{\rho}\left[ F \right] \right)$ & & constant twisted local $0$-forms of $\mathscr{B}_{\rho}\left[ F \right]$ on $U_i$, i.e. constant local sections of $\mathscr{B}_{\rho}\left[ F \right]$ on $U_i$\\
$\Omega^0 \left( \Gamma;\mathscr{B}_{\rho}\left[ F \right] \right)$ & & locally constant twisted global $0$-forms of $\mathscr{B}_{\rho}\left[ F \right]$, i.e. locally constant global sections of $\mathscr{B}_{\rho}\left[ F \right]$\\
$\Omega_i^1 \left( \Gamma;\mathscr{B}_{\rho}\left[ F \right] \right)$ & & constant twisted local $1$-forms of $\mathscr{B}_{\rho}\left[ F \right]$ on $U_i$\\
$\Omega^1 \left( \Gamma;\mathscr{B}_{\rho}\left[ F \right] \right)$ & & locally constant twisted global $1$-forms of $\mathscr{B}_{\rho}\left[ F \right]$\\
$\left[ f \right]$ & & vector in $\mathbb{K}^{nd}$ representing $f\in C^0 \left( \Gamma; F \right)$ \\
$d_{\rho}$ & & $\rho$-twisted differential where $\rho\in C^1 \left( \Gamma;G \right)$, from $C^0 \left( \Gamma;F \right)$ to $\Omega^1 \left( \Gamma;\mathscr{B}_{\rho}\left[ F \right] \right)$\\
$\delta_{\rho}$ & & $\rho$-twisted codifferential where $\rho\in C^1 \left( \Gamma;G \right)$, from $\Omega^1 \left( \Gamma;\mathscr{B}_{\rho}\left[ F \right] \right)$ to $C^0 \left( \Gamma;F \right)$\\
$\Delta_{\rho}^{\left( 0 \right)}$& & $\rho$-twisted Hodge Laplacian of degree $0$\\
$\Delta_{\rho}^{\left( 1 \right)}$& & $\rho$-twisted Hodge Laplacian of degree $1$\\
$H_{\rho}^0 \left( \Gamma; \mathscr{B}_{\rho}\left[ F \right] \right)$ & & The $0$th twisted cohomology group for $\mathscr{B}_{\rho}\left[ F \right]$, where $\rho\in C^1 \left( \Gamma; G \right)$\\
$\nu \left( S \right)$ & & frustration of the subgraph of $\Gamma$ spanned by the vertex subset $S\subset V$\\
\bottomrule
\end{tabular}
\end{center}
\label{tab: notation}
\end{table}

\subsection{A Fibre Bundle Interpretation of Synchronization Problems}
\label{sec:fibre-bundle-interpr}

We begin with a standard formulation of the synchronization problem originated in a series of works by A. Singer and collaborators \cite{Singer2011a,SS2012,BSS2013,WangSinger2013,BSAB2014,BCS2015NUG}. Let $\Gamma= \left( V,E,w \right)$ be an undirected weighted graph with vertex set $V$, edge set $E$, and weights $w_{ij}$ for each $\left( i,j \right)\in E$. Assume $G$ is a topological group acting on a normed vector space $F$. Given a map $\rho:E\rightarrow G$ from the edges of $\Gamma$ to the group $G$ satisfying $\rho_{ij}=\rho_{ji}^{-1}$, the objective of a \emph{$F$-synchronization problem over $\Gamma$ with respect to $\rho$} is to find a map $f:V\rightarrow F$ satisfying the constraints
\begin{equation}
  \label{eq:vertex-pot-satisfies-edge-pot}
  f_i=\rho_{ij}f_j\qquad \forall \left( i,j \right)\in E.
\end{equation}
If no such map $f$ exists, the synchronization problem  consists of finding a map $f$ from $V$ to $F$ that satisfies the constraints as much as possible, in the sense of minimizing the \emph{frustration}
\begin{equation}
  \label{eq:frustration}
  \eta \left( f \right) = \frac{1}{2}\frac{\displaystyle\sum_{i,j\in V}w_{ij}\left\| f_i-\rho_{ij}f_j \right\|_F^2}{\displaystyle\sum_{i\in V} d_i \left\| f_i \right\|_F^2},
\end{equation}
where $\left\| \cdot \right\|_F$ is a norm defined on $F$, and $d_i=\sum_{j:\left( i,j \right)\in E}w_{ij}$ is the weighted degree at vertex $i$. In the terminology of \cite{BSS2013}, $\rho$ is an \emph{edge potential} and $f$ is a \emph{vertex potential}; a vertex potential is said to \emph{satisfy} a given edge potential if all equalities in \eqref{eq:vertex-pot-satisfies-edge-pot} hold. Varying the choice of group $G$ and field $F$ results in different realizations of the synchronization problem \cite{SingerWu2011ODM,SZSH2011,TSR2011,SS2012,WangSinger2013,BSS2013,BSAB2014}, as will be elaborated in Section~\ref{sec:other-related-work}.

Since we will frequently refer to the set of all edge and vertex potentials on a graph, let us introduce the following notations to ease our exposition: let $C^0 \left( \Gamma; G \right)$, $C^1 \left( \Gamma; G \right)$ denote respectively the set of all $G$-valued vertex and edge potentials on $\Gamma$, i.e.
\begin{equation}
\label{eq:group_valued_vert_edge_potentials}
  C^0 \left( \Gamma; G \right) := \left\{ f:V\rightarrow G \right\},\qquad C^1 \left( \Gamma; G \right) := \left\{ \rho:E\rightarrow G\mid \rho_{ij}=\rho_{ji}^{-1},\forall \left( i,j \right)\in E \right\}.
\end{equation}
For cohomological reasons that will become clear in Section~\ref{sec:geom-coho-sync}, we will also call $C^0 \left( \Gamma; G \right)$ and $C^1 \left( \Gamma; G \right)$ the \emph{$G$-valued $0$-} and \emph{$1$-cochains} on $\Gamma$, respectively. Similarly, let
\begin{equation}
  \label{eq:F-valued-vertex-potential}
  C^0 \left( \Gamma; F \right):=\left\{ f:V\rightarrow F \right\}
\end{equation}
denote the set of all $F$-valued vertex potentials on $\Gamma$. Throughout this paper, a $G$-valued edge potential $\rho\in\Omega^0 \left( \Gamma;G \right)$ is said to be \emph{synchronizable} if there exists a $G$-valued vertex potential $f\in\Omega^0 \left( \Gamma; G \right)$ satisfying $f_i=\rho_{ij}f_j,\,\,\forall \left( i,j \right)\in E$, i.e. \eqref{eq:vertex-pot-satisfies-edge-pot} is satisfied with $F=G$. Generally, an $F$-valued vertex potential satisfying \eqref{eq:vertex-pot-satisfies-edge-pot} will be referred to as a \emph{solution to the $F$-synchronizable problem over $\Gamma$ with respect to $\rho$}, or simply \emph{$F$-valued synchronization solution}. Clearly, $\rho$ is synchronizable if and only if a $G$-valued synchronization solution exists.

When $F=G$, i.e. when we consider the action of $G$ on itself, a synchronizable edge potential can be realized geometrically as a \emph{flat}\footnote{Recall (see, e.g. \cite[\S 2]{Tu1983Hodge}) that a fibre bundle $\pi:\mathfrak{B}\rightarrow X$, with total space $\mathfrak{B}$ and base space $X$, is said to be flat if it admits a system of local trivializations with locally constant bundle coordinate transformations.} principal bundle that is isomorphic to a product space in its entirety, i.e. a \emph{trivial}\footnote{Note that a flat bundle is not necessarily trivial (i.e. isomorphic to a product space) --- the fundamental group of the base space plays a central role in this development (see e.g. \cite[Chapter 2]{Morita2001GCC}).} flat principal bundle, as will be explained in Proposition~\ref{prop:synchronizability_flat_bundle} and Proposition~\ref{prop:synchronizability_flat_bundle_one_skelenton} below; this observation forms the backbone of the entire geometric framework we develop in this paper. When the fibre bundle is differentiable, this notion of flatness is equivalent to the existence of a flat connection on the bundle, which is essentially a special case of the Riemann-Hilbert correspondence \cite{Esnault1988}. The main results of this paper build upon extending further and deeper the analogy between the geometry of synchronization problems and fibre bundles.

Proposition~\ref{prop:synchronizability_flat_bundle} and Proposition~\ref{prop:synchronizability_flat_bundle_one_skelenton} characterize the basic building block for the geometric formulation of synchronization problems. We will develop the principal bundle in the generality of topological spaces that includes smooth structures as particular cases. Following Steenrod~\cite{Steenrod1951}, a fibre bundle is a quintuple $\mathscr{E}=\left( E,M,F,\pi,G \right)$ where $E$, $M$, $F$ are topological spaces, referred to as the \emph{total space}, \emph{base space}, and \emph{fibre space}, respectively; $\pi:E\rightarrow M$ is a continuous surjective map, called the \emph{bundle projection}, and $M$ adopts an open cover $\left\{ U_i \right\}$ with homeomorphisms $\phi_i:U_i\times F\rightarrow\pi^{-1}\left( U_i \right)$ between each $\pi^{-1}\left( U_i \right)\subset E$ and the product space $U_i\times F$, such that $\pi\big|_{\pi^{-1}\left( U_i \right)}$ is the composition of $\phi_i$ with $\mathrm{proj}_1:U_i\times F\rightarrow U_i$, the canonical projection onto the first factor of the product space. In other words, the following diagram is commutative:
\begin{center}
\begin{tikzcd}[column sep=small]
\pi^{-1}\left( U_i \right) \arrow{rr}{\phi_i} \arrow[swap]{dr}{\pi}& &U_{i}\times F \arrow{dl}{\mathrm{Proj}_1}\\
& U_i & 
\end{tikzcd} 
\end{center}
The open cover $\left\{ U_i \right\}$ and the homeomorphisms $\left\{\phi_i\right\}$ together provides a system of \emph{local trivializations} for the fibre bundle $\mathscr{E}$. Moreover, $G$ is a topological transformation group on $F$ encoding the compatibility of ``change-of-coordinates'' on $M$, with respect to the provided local trivializations, in the following sense: at every $x\in U_i\cap U_j\neq\emptyset$, the restriction of the composed map $\phi_i^{-1}\circ\phi_j:U_j\times F\rightarrow U_i\times F$ on $\left\{ x \right\}\times F$, which necessarily gives rise to a homeomorphism from $\left\{ x \right\}\times F$ to itself by definition, is canonically identified with a group element $g_{ij} \left( x \right)\in G$, and the map $g_{ij}:U_i\cap U_j\rightarrow G$ is continuous. The topological group $G$ is called the \emph{structure group} of the fibre bundle $\mathscr{E}$. The notation $F_x$ is often used to denote $\pi^{-1}\left( x \right)$ for $x\in M$, and referred to as the \emph{fibre over $x\in M$}. It is straightforward to check from these definitions that
\begin{align}
  g_{ii} \left( x \right)=e\quad&\forall x\in U_i\label{eq:bundle-coordinate-transformation-condition-1}\\
  g_{ij} \left( x \right)=g_{ji}^{-1}\left( x \right)\quad&\forall x\in U_i\cap U_j\label{eq:bundle-coordinate-transformation-condition-2}\\
  g_{ij}\left( x \right)g_{jk}\left( x \right)=g_{ik}\left( x \right)\quad&\forall x\in U_i\cap U_j\cap U_k\label{eq:bundle-coordinate-transformation-condition-3}
\end{align}
where $e$ is the identity element of the structural group $G$. The family of continuous maps $\left\{ g_{ij}:U_i\cap U_j\rightarrow G \right\}$ is called a system of \emph{coordinate transformations for the fibre bundle} $\mathscr{E}$. Interestingly, essentially all information for determining the fibre bundle $\mathscr{E}$ is encoded in the coordinate transformations, as the following theorem indicates:
\begin{theorem}[Steenrod \cite{Steenrod1951} \S 3.2]
  If $G$ is a topological transformation group of $F$, $U_j$ is an open cover of $M$, $\left\{ g_{ij} \right\}$ is a family of continuous maps from each non-empty intersection $U_i\cap U_j$ to $G$ satisfying \eqref{eq:bundle-coordinate-transformation-condition-1}, \eqref{eq:bundle-coordinate-transformation-condition-2}, \eqref{eq:bundle-coordinate-transformation-condition-3}, then there exists a fibre bundle $\mathscr{E}$ with base space $M$, fibre $F$, structural group $G$, and coordinate transformations $\left\{ g_{ij} \right\}$. Any two such fibre bundles are equivalent to each other.
\end{theorem}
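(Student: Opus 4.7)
The plan is to construct the total space by gluing local product pieces along the overlaps, using the family $\{g_{ij}\}$ as the transition data, and then to establish uniqueness up to bundle equivalence in a direct manner.

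First I would form the disjoint union $\widetilde{E} = \bigsqcup_{i} (U_i \times F)$ with the disjoint-union topology; I write a typical point as $(x, f, i)$ with $x \in U_i$, $f \in F$. On $\widetilde{E}$ I would introduce the relation
\[
(x, f, j) \sim (y, h, i) \iff x = y \in U_i \cap U_j \text{ and } h = g_{ij}(x) \cdot f.
\]
The three cocycle identities yield, respectively, reflexivity (from $g_{ii}(x)=e$), symmetry (from $g_{ij}(x)=g_{ji}^{-1}(x)$), and transitivity (from $g_{ij}(x)g_{jk}(x)=g_{ik}(x)$), so $\sim$ is an equivalence relation. Set $E := \widetilde{E}/\sim$ with the quotient topology and let $q:\widetilde{E}\to E$ be the quotient map. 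The map $\pi:E\to M$ defined by $\pi([x,f,i])=x$ is well defined and continuous, and $M$ inherits the originally prescribed open cover $\{U_i\}$.

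Next I would build the local trivializations. For each $i$, the composition of the inclusion $U_i \times F \hookrightarrow \widetilde{E}$ with $q$ yields a continuous map $\phi_i : U_i \times F \to E$ whose image is $\pi^{-1}(U_i)$. Bijectivity onto $\pi^{-1}(U_i)$ is immediate: if $(x,f)$ and $(x,h)$ lie in $U_i\times F$ and are $\sim$-equivalent, then $h = g_{ii}(x)f = f$. To show $\phi_i$ is a homeomorphism I would verify it is an open map, which reduces to checking that the $\sim$-saturation of an open set $V\subset U_i\times F$ is open in $\widetilde{E}$; its intersection with $U_j\times F$ is the image of $V\cap((U_i\cap U_j)\times F)$ under $(x,f)\mapsto(x,g_{ji}(x)\cdot f)$, which is open by continuity of $g_{ji}$ together with continuity of the $G$-action on $F$. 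A direct computation then shows that $\phi_i^{-1}\circ\phi_j$ on the overlap sends $(x,f)\in(U_i\cap U_j)\times F$ to $(x,g_{ij}(x)\cdot f)$, so the coordinate transformations of $\mathscr{E}=(E,M,F,\pi,G)$ are precisely $\{g_{ij}\}$.

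For uniqueness, given two fibre bundles $\mathscr{E}=(E,M,F,\pi,G)$ and $\mathscr{E}'=(E',M,F,\pi',G)$ with the same cover and the same coordinate transformations, I would define $\Phi:E\to E'$ locally by $\Phi|_{\pi^{-1}(U_i)} = \phi'_i\circ\phi_i^{-1}$. On an overlap $\pi^{-1}(U_i\cap U_j)$ the two candidates agree because
\[
\phi'_i\circ\phi_i^{-1} = \phi'_i\circ(\phi_i^{-1}\circ\phi_j)\circ\phi_j^{-1} = \phi'_i\circ((\phi'_i)^{-1}\circ\phi'_j)\circ\phi_j^{-1} = \phi'_j\circ\phi_j^{-1},
\]
where the middle equality uses that both bundles share the transition $g_{ij}$. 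Hence $\Phi$ is a globally well-defined, fibre-preserving homeomorphism, i.e.\ a bundle equivalence. The main obstacle I anticipate is purely topological: showing that the quotient space $E$ is genuinely locally trivial, which amounts to proving that $\phi_i$ is an open map. This is precisely the step where continuity of each $g_{ij}$ and of the action $G\times F\to F$ is essentially needed; the rest is formal bookkeeping with the cocycle identities.
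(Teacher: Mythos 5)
The paper does not actually prove this theorem; it is quoted directly from Steenrod \cite{Steenrod1951} \S 3.2, so there is no ``paper proof'' to compare against. Your proposal is correct and is in fact essentially Steenrod's own construction: glue the local products $U_i\times F$ along overlaps via the cocycle, check that conditions \eqref{eq:bundle-coordinate-transformation-condition-1}--\eqref{eq:bundle-coordinate-transformation-condition-3} make the gluing relation an equivalence relation, verify local triviality by showing the induced maps $\phi_i$ are open (which uses joint continuity of the $G$-action on $F$ together with continuity of the $g_{ij}$), and establish uniqueness by patching the fibrewise identifications $\phi_i'\circ\phi_i^{-1}$, whose agreement on overlaps is exactly the statement that the two bundles share the same transition cocycle. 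One small point worth spelling out in a polished write-up: in the uniqueness step you should note explicitly that, in each chart, $(\phi_i')^{-1}\circ\Phi\circ\phi_i$ is the identity on $U_i\times F$, hence the fibrewise maps induced by $\Phi$ lie in $G$ and depend continuously on the base point, so $\Phi$ is a bundle map in Steenrod's sense (\cite[\S 2.5]{Steenrod1951}), not merely a fibre-preserving homeomorphism.
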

The precise definition for two fibre bundles with the same base space, fibre space, and structural group to be equivalent can be found in \cite[\S 2.4]{Steenrod1951}, but we will also cover it in Section~\ref{sec:synchr-probl}. Notice that the conditions \eqref{eq:bundle-coordinate-transformation-condition-1}, \eqref{eq:bundle-coordinate-transformation-condition-2}, \eqref{eq:bundle-coordinate-transformation-condition-3} are reminiscent of the characterization for the synchronizability \eqref{eq:vertex-pot-satisfies-edge-pot} of a $G$-valued edge potential on a connected graph: if $\rho$ satisfies \eqref{eq:vertex-pot-satisfies-edge-pot} for a map $f:V\rightarrow G$, then $\rho_{ij}=f_if_j^{-1}$ on each edge $\left( i,j \right)\in E$, which certainly satisfies
\begin{equation}
\label{eq:synchronizability-characterization}
  \rho_{ii}=e\,\,\forall i\in V,\quad\rho_{ij}=\rho_{ji}^{-1}\,\,\forall \left( i,j \right)\in E,\quad\rho_{ij}\rho_{jk}=\rho_{ik}\,\,\forall \left( i,j \right),\left( j,k \right),\left( i,k \right)\in E.
\end{equation}
As the following Proposition~\ref{prop:synchronizability_flat_bundle} establishes, viewing the graph $\Gamma$ as a topological space, an appropriate open cover of $\Gamma$ can be found such that any synchronizable edge potential can be realized as coordinate transformations of a fibre bundle with base space $\Gamma$ and the topological group $G$ serving both as the fibre space and the structure group. A fibre bundle with its structural group as fibre type is called a \emph{principal bundle}. Moreover, any such principal bundle must also be flat, as the bundle coordinate transformations take constant values on every non-empty intersection of sets in the open cover. The following simple concepts from combinatorial graph theory and algebraic topology (see, e.g. \cite{BC2008,BottTu1982}) will be needed for the statement and proof of Proposition~\ref{prop:synchronizability_flat_bundle}:
\begin{enumerate}[1)]
\item The \emph{$n$-skeleton} of a simplicial complex $\mathcal{K}$ is the subcomplex of $\mathcal{K}$ consisting of all $j$-dimensional faces for $0\leq j\leq n$;
\item The \emph{support} of a simplicial complex $\mathcal{K}$ is the underlying topological space of $\mathcal{K}$;
\item The \emph{star neighborhood} of a vertex $v$ in a simplicial complex $\mathcal{K}$ is the union of all closed simplices in $\mathcal{K}$ containing $v$ as a vertex;
\item A \emph{clique complex} of a graph $\Gamma= \left( V,E \right)$ is the simplicial complex with all complete subgraphs of $\Gamma$ as its faces.
\end{enumerate}

\begin{proposition}
\label{prop:synchronizability_flat_bundle}
  Let $G$ be a topological group, $\Gamma=\left( V,E \right)$ a connected undirected graph, and $\rho:E\rightarrow G$ a map satisfying $\rho_{ij}=\rho_{ji}^{-1}$ for all $\left( i,j \right)\in E$. Denote $X$ for the $2$-skeleton of the clique complex of the graph $\Gamma$, $\mathcal{X}$ the support of $X$, and $\mathfrak{U}=\left\{ U_i\,\mid\,1\leq i\leq \left| V \right| \right\}$ for an open cover of $\mathcal{X}$ in which $U_i$ is the interior of the star of vertex $i$. Then $\rho$ is synchronizable over $G$ if and only if there exists a flat trivial principal fibre bundle $\pi:\mathscr{P}_{\rho}\rightarrow \mathcal{X}$ with structure group $G$ and a system of local trivializations defined on the open sets in $\mathfrak{U}$ with constant bundle transition functions $\rho_{ij}$ on non-empty $U_i\cap U_j$.
\end{proposition}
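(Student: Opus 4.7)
The plan is to align the combinatorial identities \eqref{eq:synchronizability-characterization} with Steenrod's bundle-theoretic conditions \eqref{eq:bundle-coordinate-transformation-condition-1}--\eqref{eq:bundle-coordinate-transformation-condition-3} by first analyzing the intersection pattern of the open cover $\mathfrak{U}$. The open star $U_i$ consists of the vertex $i$ together with the interiors of all edges incident to $i$ and of every triangle of $\Gamma$ containing $i$ (viewed as a $2$-simplex of the clique complex), so $U_i\cap U_j\ne\emptyset$ exactly when $i=j$ or $(i,j)\in E$, and $U_i\cap U_j\cap U_k\ne\emptyset$ exactly when $\{i,j,k\}$ spans a triangle in $\Gamma$. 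Under this dictionary, the cocycle axioms \eqref{eq:bundle-coordinate-transformation-condition-1}--\eqref{eq:bundle-coordinate-transformation-condition-3} demanded by Steenrod's theorem for a constant family $\{\rho_{ij}\}$ on $\mathfrak{U}$ coincide exactly with \eqref{eq:synchronizability-characterization}.

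For the forward direction, I would begin with a synchronization solution $f\in C^0(\Gamma;G)$, write $\rho_{ij}=f_if_j^{-1}$, and check \eqref{eq:synchronizability-characterization} directly; the constant maps $\{\rho_{ij}\}$ then form a continuous system of coordinate transformations to which Steenrod's theorem applies, producing a principal $G$-bundle $\mathscr{P}_\rho\to\mathcal{X}$ with precisely these transition functions. Flatness is automatic from the constancy of the $\rho_{ij}$. For triviality I would use $f$ itself: the constant local maps $s_i:U_i\to G$ defined by $s_i(x)\equiv f_i$ satisfy $s_i=\rho_{ij}s_j$ on every nonempty $U_i\cap U_j$ because that identity is literally the synchronization condition \eqref{eq:vertex-pot-satisfies-edge-pot}, so the $s_i$ glue to a global section of $\mathscr{P}_\rho$ and trivialize it.

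In the reverse direction I would read a global section $s$ of the given flat trivial bundle through the local trivializations $\phi_i$ to obtain maps $s_i:U_i\to G$ with $s_i(x)=\rho_{ij}\,s_j(x)$ on every overlap, then pass to a locally constant (horizontal) section and set $f_i$ equal to the constant value of $s_i$ on $U_i$; the identity $f_i=\rho_{ij}f_j$ is then immediate and $f$ is a vertex potential satisfying $\rho$. The main obstacle I anticipate is exactly this horizontalization step: one has to justify that the hypothesis of triviality, combined with the locally constant transition cocycle, produces not merely a continuous global section but a flat one. My preferred argument is to exploit the contractibility of each star $U_i$, observe that the flat structure makes ``locally constant section'' a well-defined notion, and then propagate a choice of fibre element at vertex $1$ along the connected graph $\Gamma$, using the constancy of the $\rho_{ij}$ to verify that this extension is independent of the path chosen (which is exactly triviality of the holonomy on the $2$-skeleton). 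The remainder is bookkeeping once the cover combinatorics are in place.
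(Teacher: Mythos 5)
Your forward direction matches the paper's proof essentially verbatim: both check the intersection pattern of the open star cover, read off the cocycle axioms \eqref{eq:bundle-coordinate-transformation-condition-1}--\eqref{eq:bundle-coordinate-transformation-condition-3} from \eqref{eq:synchronizability-characterization}, invoke Steenrod's construction theorem, and then exhibit the constant-on-each-$U_i$ global section $s(x)=\phi_i(x,f_i)$ to trivialize $\mathscr{P}_\rho$. That part is fine.

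The reverse direction is where I want to push back. You correctly flag the ``horizontalization'' step as the obstacle, but your proposed resolution does not actually close it. You propose propagating a fibre element from vertex $1$ along the connected graph and claim path-independence follows from ``the constancy of the $\rho_{ij}$.'' It does not. Path-independence is equivalent to trivial holonomy on $\mathcal{X}$, and constancy of the transition functions is compatible with arbitrary nontrivial holonomy. The real question is: why would triviality of $\mathscr{P}_\rho$ force the holonomy to vanish? If ``trivial'' means topologically trivial as a $G$-bundle, the implication is actually false in general: take $G=SO(2)$ and $\Gamma$ a $4$-cycle (so $\mathcal{X}\simeq S^1$) with $\rho_{ij}=R_{\pi/4}$ on each consecutive edge. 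Every $SO(2)$-bundle over $S^1$ is topologically trivial since $H^2(S^1;\mathbb{Z})=0$, yet the holonomy is $R_\pi\ne e$ and $\rho$ is not synchronizable. So either the proposition must be read with ``trivial'' meaning trivial \emph{as a flat bundle} (equivalently, admitting a \emph{locally constant} global section), or the structure group must be taken to be $G_\delta$ (same group, discrete topology), which is what the paper implicitly does in its treatment of $\mathscr{B}_\rho$ around Proposition~\ref{prop:synchronizability_flat_bundle_one_skelenton} and Lemma~\ref{lem:equiv-class-characterization}.

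Once you make that reading explicit, the reverse direction becomes short and avoids path-propagation entirely. Triviality as a $G_\delta$-bundle means (Steenrod, Lemma 2.10) there are continuous maps $\lambda_i:U_i\rightarrow G_\delta$ with $\rho_{ij}=\lambda_i(x)\lambda_j(x)^{-1}$ on every nonempty $U_i\cap U_j$. Each $U_i$ (an open star, hence connected) maps continuously into a discrete space, so each $\lambda_i$ is constant; set $f_i:=\lambda_i$, and for every edge $(i,j)$ the relation $\rho_{ij}=f_if_j^{-1}$ holds because $U_i\cap U_j\ne\emptyset$. This produces the synchronizing vertex potential directly, and the holonomy observation you were reaching for (``triviality of the holonomy on the $2$-skeleton'') falls out as a corollary rather than something you have to establish by hand.
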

A proof of Proposition~\ref{prop:synchronizability_flat_bundle} can be found in \ref{sec:appendix-proof-fibre-bundle-sync}. The key idea is to view $\Gamma$ as the $1$-skeleton of its associated clique complex, and use the open cover consisting of star neighborhoods of each vertex. A similar construction of ``Cryo-EM complex'' has been used in \cite{YeLim2017} to classify data input to Cryo-EM problems, an important application of synchronization techniques.

However, it is important to notice that the converse to Proposition~\eqref{prop:synchronizability_flat_bundle} is not true in general; more precisely, an edge potential satisfying \eqref{eq:synchronizability-characterization}, which necessarily specifies a flat principal bundle over $\Gamma$, need not be synchronizable. For a simple example, consider a square graph $\Gamma$ consisting of a four vertices $1$, $2$, $3$, $4$ and four edges $\left( 1,2 \right)$, $\left( 2,3 \right)$, $\left( 3,4 \right)$, $\left( 4,1 \right)$, forming a closed simple loop but without any triangles enclosed by three edges. An edge potential satisfying $\rho_{ij}=\rho_{ji}^{-1}$ on all edges clearly satisfies all equalities in \eqref{eq:synchronizability-characterization} since no consistency needs to be checked on edge triplets, but it is easy to find $\rho$ violating the equality $\rho_{12}\rho_{23}\rho_{34}\rho_{41}=e$ which must be obeyed by any synchronizable edge potential, provided that the group $G$ is not trivial. The lesson is that the compatibility conditions \eqref{eq:synchronizability-characterization} are of a local nature, in the sense that the \emph{cycle-consistency} (borrowing a term from geometry processing of shape collections \cite{NBWYG2011,HuangGuibas2013} that describes a compatibility constraint analogous to the last equality in \eqref{eq:synchronizability-characterization}) is imposed only on triangles composed of edge triplets; in contrast, synchronizability requires a stronger notion of ``global'' cycle-consistency for the operation of composing group elements along loops of arbitrary length and topology on he graph. In a certain sense, fibre bundles are the geometric models realizing edge potentials that are ``locally synchronizable.''

Proposition~\ref{prop:synchronizability_flat_bundle} is our first attempt at understanding the geometric mechanism of synchronization problems. The assumption of the synchronizability of $\rho$ significantly restricts the range of applicability of this geometric analogy: in most scenarios of interest, the synchronizability of an edge potential is the goal rather than the starting point for a synchronization problem. Fortunately, it is possible to extend the fibre bundle analogy beyond the synchronizability assumption in Proposition~\ref{prop:synchronizability_flat_bundle}, by restricting the model base space from the $2$-skeleton of the clique complex of the graph to the $1$-skeleton, and adjust the open cover $\mathfrak{U}$ accordingly: if we define an open cover $\mathfrak{U}$ on the graph $\Gamma$ (which as a topological space is canonically identified with the $1$-skeleton of its clique complex) in which each open set $U_i$ covers only vertex $i$ and the interior of all edges adjacent to it, then $U_i\cap U_j\neq \emptyset$ if and only if $\left( i,j \right)\in E$, and any triple intersection of open sets in $\mathfrak{U}$ is empty. In consequence, any system of bundle coordinate transformations defined on $\mathfrak{U}$ by a $G$-valued edge potential $\rho$ automatically satisfies \eqref{eq:bundle-coordinate-transformation-condition-1}, \eqref{eq:bundle-coordinate-transformation-condition-2}, \eqref{eq:bundle-coordinate-transformation-condition-3}, and specifies a flat principal $G$-bundle over $\Gamma$, denoted as $\mathscr{B}_{\rho}$, regardless of synchronizability. This is also consistent with the definition of vector bundles on graphs in \cite{Kenyon2011}. Clearly, when $\rho$ is synchronizable, $\mathscr{B}_{\rho}$ is the restriction of the principal $G$-bundle $\mathscr{P}_{\rho}$ in Proposition~\ref{prop:synchronizability_flat_bundle} to the $1$-skeleton of the base space $\Gamma$, therefore trivial as well. Conversely, if $\mathscr{B}_{\rho}$ is trivial, by \cite[\S2.10 or \S4.3]{Steenrod1951}, there exists a map $f:\Gamma\rightarrow G$ assigning a constant value $f_i$ for all points $x\in U_i$ such that $\rho_{ij}=f_if_j^{-1}$ for all $U_i\cap U_j\neq \emptyset$, which gives rise to a map $f:V\rightarrow G$ by restriction to the vertex set $V$ of $\Gamma$; this verifies all constraints in \eqref{eq:vertex-pot-satisfies-edge-pot} and establishes the synchronizability of the edge potential $\rho$. Consequently, the triviality of $\mathscr{B}_{\rho}$ and $\mathscr{P}_{\rho}$ implies each other, both are equivalent to the synchronizability of $\rho$. We summarize these observations in Proposition~\ref{prop:synchronizability_flat_bundle_one_skelenton} and formally define the \emph{synchronization principal bundle} $\mathscr{B}_{\rho}$, which will be of central importance for the geometric framework we develop in the rest of this paper.

\begin{proposition}
\label{prop:synchronizability_flat_bundle_one_skelenton}
  Let $G$ be a topological group, $\Gamma=\left( V,E \right)$ a connected undirected graph, and $\rho:E\rightarrow G$ a map satisfying $\rho_{ij}=\rho_{ji}^{-1}$ for all $\left( i,j \right)\in E$. Write $\mathfrak{U}=\left\{ U_i\,\mid\,1\leq i\leq \left| V \right| \right\}$ for an open cover of $\Gamma$ in which $U_i$ is the union of the single vertex set $\left\{ i \right\}$ with the interior of all edges adjacent to the vertex $i$. Then $\rho$ defines a flat principal $G$-bundle $\mathscr{B}_{\rho}$ over $\Gamma$ with a system of local trivializations defined on the open sets in $\mathfrak{U}$ with constant bundle transition functions $\rho_{ij}$ on non-empty $U_i\cap U_j$. Furthermore, $\rho$ is synchronizable if and only if $\mathscr{B}_{\rho}$ is trivial.
\end{proposition}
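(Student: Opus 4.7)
My plan is to split the proof into two parts: constructing the flat principal bundle $\mathscr{B}_\rho$ from $\rho$, and then characterizing its triviality in terms of synchronizability.

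For the construction, I would first record the key combinatorial-topological features of the cover $\mathfrak{U}$. Each $U_i$ is open in $\Gamma$ and path-connected (the vertex $i$ is a limit point of every incident open edge interior). Two distinct stars $U_i, U_j$ intersect exactly when $(i,j) \in E$, in which case $U_i \cap U_j$ is the open interior of the edge $(i,j)$; every triple intersection among three distinct vertex stars is empty, because an open edge interior is contained in the stars of exactly its two endpoints. Setting $g_{ij} := \rho_{ij}$ on each non-empty $U_i \cap U_j$ (together with $g_{ii} := e$) makes the cocycle conditions \eqref{eq:bundle-coordinate-transformation-condition-1}--\eqref{eq:bundle-coordinate-transformation-condition-3} essentially automatic: \eqref{eq:bundle-coordinate-transformation-condition-1} is by definition, \eqref{eq:bundle-coordinate-transformation-condition-2} is the standing hypothesis $\rho_{ij} = \rho_{ji}^{-1}$, and \eqref{eq:bundle-coordinate-transformation-condition-3} is vacuous in the absence of triple intersections. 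Steenrod's reconstruction theorem quoted above then produces a fibre bundle $\mathscr{B}_\rho$ with base $\Gamma$, fibre $G$, and structure group $G$ acting on itself by left translation --- by definition, a principal $G$-bundle. Flatness is immediate, since every transition function $\rho_{ij}$ is literally constant on its connected domain.

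For the forward direction of the equivalence, given a vertex potential $f: V \to G$ satisfying $f_i = \rho_{ij} f_j$, I would define the constant local sections $\lambda_i \equiv f_i$ on each $U_i$ and verify $\lambda_i(x)\lambda_j(x)^{-1} = f_i f_j^{-1} = \rho_{ij}$ on every non-empty $U_i \cap U_j$; these patch into a global section of $\mathscr{B}_\rho$, so $\mathscr{B}_\rho$ is trivial. For the converse, assuming $\mathscr{B}_\rho$ is trivial I would appeal to \cite[\S2.10 or \S4.3]{Steenrod1951}, using crucially the flatness of $\mathscr{B}_\rho$, to extract \emph{locally constant} trivializing maps $\lambda_i: U_i \to G$ with $\rho_{ij} = \lambda_i \lambda_j^{-1}$. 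Connectedness of $U_i$ then promotes ``locally constant'' to ``constant'', so $\lambda_i \equiv f_i$ for some $f_i \in G$, and restricting to the vertex set yields the desired synchronization solution.

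I expect the only genuine subtlety to be in the converse direction. Triviality of $\mathscr{B}_\rho$ purely as a continuous principal bundle a priori yields only continuous trivializing maps $\lambda_i$, and a non-constant $\lambda_i$ cannot be evaluated at the vertex to recover the constant $\rho_{ij}$ along the open edge $(i,j)$. The flat structure on $\mathscr{B}_\rho$ is essential for refining triviality to locally constant trivializations, which is exactly what the Steenrod citation supplies. An alternative route that avoids invoking Steenrod directly is via the holonomy homomorphism $\pi_1(\Gamma) \to G$ attached to the flat bundle $\mathscr{B}_\rho$: the bundle is trivial (in the flat sense) iff this holonomy is trivial iff the product of $\rho_{ij}$'s along every closed loop in $\Gamma$ equals $e$, which is the classical combinatorial reformulation of synchronizability.
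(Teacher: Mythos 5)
Your proposal is correct and follows the paper's structure closely, so I will comment mainly on the two places where you diverge or add something.

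For the forward direction the paper simply restricts the $2$-skeleton bundle $\mathscr{P}_{\rho}$ from Proposition~\ref{prop:synchronizability_flat_bundle} to the $1$-skeleton and observes that triviality is inherited; you instead directly exhibit a global section from the constant local sections $\lambda_i \equiv f_i$. Your route is more self-contained (it does not require $\rho$ to satisfy the cocycle condition on $2$-simplices, so it does not invoke Proposition~\ref{prop:synchronizability_flat_bundle} at all), and in fact it is just the analogue, on the $1$-skeleton, of the global-section construction the paper uses in the Appendix to prove Proposition~\ref{prop:synchronizability_flat_bundle}. Either works.

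The subtlety you flag in the converse direction is genuine and worth articulating even more precisely than you do. Triviality of $\mathscr{B}_{\rho}$ as a \emph{topological} principal $G$-bundle would \emph{not} suffice: if $G$ is connected (e.g.\ $G = \SO(2)$), then every $G$-bundle over a $1$-complex is topologically trivial, yet $\rho$ can easily be non-synchronizable (put $\rho \equiv e$ on a cycle graph except on one edge). The resolution is not that ``flatness refines a continuous trivialization to a locally constant one'' --- you cannot in general upgrade a continuous trivialization --- but rather that ``trivial'' in the statement means trivial \emph{as a flat bundle}, equivalently as a $G_{\delta}$-bundle. This is how the paper implicitly reads it (see the passage introducing Lemma~\ref{lem:equiv-class-characterization}, where equivalence of $\mathscr{B}_{\rho}$'s is taken in the $G_{\delta}$ category), and it is what turns the continuous trivializing maps $\lambda_i : U_i \to G_{\delta}$ from the Steenrod citation into locally constant, hence constant, functions on each connected $U_i$. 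Your alternative argument via the holonomy homomorphism states this cleanly (``trivial in the flat sense iff holonomy trivial'') and is exactly the route the paper later formalizes in Corollary~\ref{cor:cor-trivial-hol}; it is arguably the tidiest way to avoid the ambiguity altogether.
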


\begin{definition}[Synchronization Principal Bundle]
\label{defn:sync-bundle}
  The fibre bundle $\mathscr{B}_{\rho}$ associated with the connected graph $\Gamma$ and edge potential $\rho$ as characterized in Proposition~\ref{prop:synchronizability_flat_bundle_one_skelenton} is called a \emph{synchronization principal bundle of edge potential $\rho$ over $\Gamma$}, or a \emph{synchronization principal bundle} for short.
\end{definition}

In practice, it is often more convenient to work with $\mathscr{B}_{\rho}$ rather than $\mathscr{P}_{\rho}$, not only since non-synchronizable edge potentials are much more prevalent, but also because noisy or incomplete measurements almost always cause the observed group elements $\rho_{ij}\in G$ to be non-synchronizable. Solving for a $G$-valued synchronization solution can thus be viewed as an approach to ``denoising'' or ``filtering'' those observed transformations $\rho_{ij}$, as was already implicit in many applications \cite{BSS2013,BKS2016}. In the sense of Proposition~\ref{prop:synchronizability_flat_bundle_one_skelenton}, these problems can be interpreted as inference on the structure of flat principal bundles.

 
 
Most synchronization problems in practice \cite{SingerWu2011ODM,SZSH2011,SS2012,TSR2011} considers vertex potentials valued in $G$, the same topological group in which the prescribed edge potential takes value, pertaining to the principal bundle picture (i.e. $F=G$) discussed in Proposition~\ref{prop:synchronizability_flat_bundle} and Proposition~\ref{prop:synchronizability_flat_bundle_one_skelenton}. Our fibre bundle interpretation naturally includes more general synchronization problems in which the vertex potentials takes values in $F\neq G$ as well, by relating the synchronizability of an edge potential to the existence of global sections on an \emph{associated $F$-bundle} $\mathscr{P}_{\rho}\times_\eta F$ or $\mathscr{B}_{\rho}\times_\eta F$, where $\eta: G\times F\rightarrow F$ denotes the action of $G$ on $F$. Essentially, an associated $F$-bundle $\mathscr{P}_{\rho}\times_\eta F$ (or $\mathscr{B}_{\rho}\times_\eta F$) is constructed using the same procedure as the principal bundle $\mathscr{P}_{\rho}$ (or $\mathscr{B}_{\rho}$), but with fibre $F$ instead of $G$. The associated bundles $\mathscr{P}_{\rho}\times_\eta F$, $\mathscr{B}_{\rho}\times_\eta F$ are thus also flat (but not necessarily trivial), since their bundle coordinate transformations are equivalent to those of their principal bundle, up to the representation induced by the action $\eta$.

 A major difference between working with an associated bundle and the principal bundle is that the cocycle condition $\rho_{kj}\rho_{ji}=\rho_{ki}$ may still not be satisfied in the presence of a vertex potential $f:V\rightarrow F$ satisfying \eqref{eq:vertex-pot-satisfies-edge-pot}, as elements in $F$ can not be ``inverted'' in general; another important difference is the relation between triviality and global sections: whereas a principal bundle $\mathscr{P}_{\rho}$ or $\mathscr{B}_{\rho}$ is trivial if and only if \emph{one} global section exists, which amounts to finding \emph{one} solution to the synchronization problem over $\Gamma$ with respect to $\rho$, an associated bundle may admit one or more global sections yet still be non-trivial. For instance, a vector bundle always admits the zero global section, regardless of its triviality. It turns out that establishing the synchronizability of an edge potential through the triviality of an associated bundle requires finding ``sufficiently many'' global sections of an associated bundle, or in terms of synchronization problems, ``sufficiently many'' solutions satisfying \eqref{eq:vertex-pot-satisfies-edge-pot}. This is also reflected in the twisted Hodge theory we develop in Section~\ref{sec:geom-synchr-probl}. Even though finding enough global sections seems to be more work, in practice it could be much easier to find a set of global sections on the associated bundle than to find even only one global section on the principal bundle, as the action of $G$ on the space $F$ introduces additional information from both geometric and practical points of view. Since the flat $F$-bundle associated with $\mathscr{B}_{\rho}$ will be essential for Section~\ref{sec:geom-synchr-probl}, we introduce the following definition:

 \begin{definition}[Synchronization Associated Bundle]
   The flat $F$-bundle on $\Gamma$ associated with the flat principal bundle $\mathscr{B}_{\rho}$ by the action of $G$ on $F$  is called a \emph{synchronization associated $F$-bundle of edge potential $\rho$ over $\Gamma$}, or \emph{synchronization associated bundle} for short, denoted as $\mathscr{B}_{\rho}\left[ F \right]$.
 \end{definition}

We close this preliminary section drawing analogy between synchronization problems and fibre bundles by clarifying the relation among, and the geometric implications of, some variants of the optimization formulation of synchronization problems. Given a graph $\Gamma= \left( V,E \right)$ and a $G$-valued edge potential $\rho$, a direct translation of the goal of finding an $F$-valued vertex potential $f$ satisfying \eqref{eq:vertex-pot-satisfies-edge-pot} as much as possible is to solve
\begin{equation}
  \label{eq:sync_prob_gen_type_1_special}
  \min_{f:V\rightarrow F} \sum_{\left( i,j \right)\in E} \textrm{Cost}_F\left( \rho_{ij}f_j, f_i \right),
\end{equation}
where $\mathrm{Cost}_F: F\times F\rightarrow \left[ 0,\infty \right)$ is a cost function on $F$ (e.g., derived from a distance or a norm). When we seek multiple solutions to an $F$-synchronization problem over $\Gamma$ with prescribed edge potential $\rho$, it is natural to impose the additional constraints that the solutions should be sufficiently different from each other; in the presence of a Hilbert space structure on $F$, it is convenient to impose orthogonality constraints between pairs of solutions to obtain linearly independence. With additional normalization constraints to fix the issue of identifiability, this exactly translates into the spectral relaxation algorithm in \cite{BSS2013}. If the prescribed edge potential is synchronizable, its synchronizability will be confirmed once a sufficient number of synchronization solutions can be collected, where the actual number depends on the property of the group $G$ as well as its action on $F$; if not, a synchronizable edge potential can be constructed from sufficiently many $F$-valued ``approximate solutions'' that minimize the objective function in \eqref{eq:sync_prob_gen_type_1_special} as much as possible. The case $G=F$ corresponds to the optimization problem
\begin{equation}
\label{eq:sync_prob_gen_type_1}
  \min_{f:V\rightarrow G} \sum_{\left( i,j \right)\in E} \textrm{Cost}_G\left( \rho_{ij}f_j, f_i \right).
\end{equation}
which, in the case the $\mathrm{Cost}_G$ is $G$-invariant, is equivalent to
\begin{equation}
  \label{eq:sync_prob_gen_type_2_simplified}
    \min_{\substack{f:V\rightarrow G}}\sum_{\left( i,j \right)\in E} \mathrm{Cost}_G \left( \rho_{ij}, f_if_j^{-1}\right).
\end{equation}
If $\rho$ is synchronizable, a minimizer of \eqref{eq:sync_prob_gen_type_1} (resp. \eqref{eq:sync_prob_gen_type_1_special}) attaining zero objective value can be geometrically realized as a global section of the synchronization principal bundle $\mathscr{B}_{\rho}$ (resp. $\mathscr{P}_{\rho}$); such a minimizer implies the triviality of the principal bundle $\mathscr{B}_{\rho}$ (resp. $\mathscr{P}_{\rho}$), but not necessarily so in general for the associated bundle $\mathscr{B}_{\rho}\times_{\eta}F$ (resp. $\mathscr{P}_{\rho}\times_{\eta}F$). If $\rho$ is not synchronizable, the minimum values of \eqref{eq:sync_prob_gen_type_1}, \eqref{eq:sync_prob_gen_type_2_simplified}, and \eqref{eq:sync_prob_gen_type_1_special} are all greater than zero, and  minimizer of \eqref{eq:sync_prob_gen_type_1} or \eqref{eq:sync_prob_gen_type_2_simplified} can be viewed as a ``denoised'' or ``filtered'' version of a trivial flat principal bundle underlying the dataset.

\subsection{Main Contributions}
\label{sec:main-results}

In this section we give a brief overview of our main contribution. We will motivate the two ingredients of the geometric framework developed in Section~\ref{sec:geom-coho-sync}, namely, holonomy representation and Hodge theory, by demonstrating preliminary versions of our formulation that lead to weaker conclusions or incomplete geometric pictures, then sketch the full approaches adopted in Section~\ref{sec:geom-coho-sync}. Finally, we draw the link between the geometric framework and the proposal of the \emph{learning group actions} (LGA) problem.

\subsubsection{Holonomy of Synchronization Principal Bundles}
\label{sec:holon-synchr-bundl}
Consider $\mathscr{B}_{\rho}$, the synchronization principal bundle arising from a $G$-synchronization problem over a connected graph $\Gamma=\left( V,E \right)$ with respect to $\rho\in C^1 \left( \Gamma;G \right)$. Fix an arbitrary vertex $v\in V$, and denote the set of all $v$-based loops in $\Gamma$ (loops with $v$ as both the starting and ending vertex) as $\Omega_v$; $\Omega_v$ carries a natural group structure by the composition of $v$-based loops. Now the procedure of taking the product of the values of $\rho$ along the consecutive edges in the loop specifies a group homomorphism from $\Omega_v$ into $G$. Denote the image of this group homomorphism by $H_v$, which is necessarily a finitely generated subgroup of $G$ since $\Gamma$ is a finite graph. The simple but important observation here is that $H_v$ is the trivial subgroup of $G$ if and only if $\rho$ is synchronizable. The group $H_v$ is the analogy of the \emph{holonomy group based at $v$} in differential geometry, if we view $\rho_{ij}$ on edge $\left( i,j \right)$ as the \emph{parallel-transport} between fibres of $\mathscr{B}_{\rho}$ at $i,j\in V$.

Section~\ref{sec:synchr-probl} is devoted to a deeper and more systematic treatment of the group homomorphism from loops in $\Gamma$ to the structure group $G$. We will define $\Hol_{\rho}\left( \Gamma \right)$, the holonomy of the synchronization principal bundle $\mathscr{B}_{\rho}$ (independent of the choice of the base vertex $v$), as well as an equivalence relation on $C^1 \left( \Gamma; G \right)$ induced by a right action of $C^0 \left( \Gamma; G \right)$ (which is treated implicitly when solving synchronization problems in practice), and establish a correspondence between $\Hol_{\rho}\left( \Gamma \right)$ and the equivalence class in $C^1 \left( \Gamma; G \right)/C^0 \left( \Gamma; G \right)$ to which $\rho \in C^1 \left( \Gamma; G \right)$ belongs. In particular, trivial holonomy $\Hol_{\rho}\left( \Gamma \right)$ corresponds to the orbit in $C^1 \left( \Gamma; G \right)/C^0 \left( \Gamma; G \right)$ consisting precisely of all synchronizable edge potentials. This correspondence will be formulated in Theorem~\ref{lem:holonomy_homomorphism_bijectivity} as between $C^1 \left( \Gamma;G \right)/C^0 \left( \Gamma;G \right)$ and $\Hom \left( \pi_1 \left( \Gamma \right), G \right)/G$, the \emph{representation variety} of the fundamental group of $\Gamma$ into $G$.

\subsubsection{Twisted De Rham Cohomology of Synchronization Associated Vector Bundles}
\label{sec:twisted-de-rham}

The \emph{graph connection Laplacian} (GCL) for an $F$-synchronization problem over graph $\Gamma$ with respect to $\rho\in\Omega^1 \left( \Gamma; G \right)$ is a linear operator on $\Omega^0 \left( \Gamma; F \right)$ defined as
\begin{equation*}
  \left(L_1f\right)_i:=\frac{1}{d_i}\sum_{j:\left( i,j \right)\in E}w_{ij} \left( f_i-\rho_{ij}f_j \right),\quad\forall i\in V,\,\,\forall f\in C^0 \left( \Gamma; F \right).
\end{equation*}
If $F$ is a vector space and $G$ has a matrix representation on $F$, GCL can be written as a block matrix in which the $\left( i,j \right)$-th block is the matrix representation of $\rho_{ij}$, if $\left( i,j \right)\in E$. GCL essentially carries all information of a synchronization problem and is of central importance to the spectral and SDP relaxation algorithms for synchronization. Our motivation for Section~\ref{sec:geom-synchr-probl} was to provide a cohomological interpretation for GCL, in the hope of realizing it geometrically as a Hodge Laplacian in a cochain complex, inspired by a similar geometric realization of the \emph{graph Laplacian} (in the context of algebraic and spectral graph theory) as a Hodge Laplacian on degree-zero forms in discrete Hodge theory (see \ref{sec:graph-lapl-discr}). Note that GCL reduces to the graph Laplacian if the group $G$ is a scalar field.

In the literature of differential geometry, \emph{twisted differential forms} on a flat vector bundle $\mathscr{E}$ can be intuitively thought as bundle-valued differential forms on the base manifold. The twisted Hodge theory we develop in Section~\ref{sec:geom-synchr-probl} defines two discrete differential operators that are formal adjoints of each other between \emph{constant twisted local $0$-forms} and \emph{constant twisted local $1$-forms} on the synchronization associated $F$-bundle $\mathscr{B}_{\rho}\left[ F \right]$ , namely the \emph{$\rho$-twisted differential $d_{\rho}$} and the \emph{$\rho$-twisted codifferential $\delta_{\rho}$}, such that GCL can be written as the composition $\delta_{\rho}d_{\rho}$. Provisionally, by identifying each $f\in C^0 \left( \Gamma; F \right)$ naturally with a collection of constant twisted local $0$-forms --- one for each open set $U_i\in\mathfrak{U}$ --- a coarse approximation of our construction can be written as
\begin{equation*}
  \begin{aligned}
    &\left(d_{\rho}f\right)_{ij}\sim f_i-\rho_{ij}f_j,\quad\forall f\in C^0 \left( \Gamma; F \right),\\
    &\left(\delta_{\rho}\omega\right)_i\sim\frac{1}{d_i}\sum_{j:\left( i,j \right)\in E}w_{ij}\omega_{ij},\quad\forall\omega\in C^1 \left( \Gamma; F \right):=\left\{ \omega:E\rightarrow F \mid \omega_{ij}=-\omega_{ji}\,\,\forall \left( i,j \right)\in E\right\},
  \end{aligned}
\end{equation*}
from which it can be easily checked that $L_1=\delta_{\rho}d_{\rho}$ on $C^0 \left( \Gamma; F \right)$. The main conceptual difficulty with this natural formulation is that ``$d_{\rho}f$'' defined as such does not possess the skew-symmetry desired for $1$-forms, since in general
\begin{equation}
\label{eq:violation-skew-symmetry}
  f_j-\rho_{ji}f_j=-\rho_{ji}\left( f_i-\rho_{ij}f_j \right)\neq -\left( f_i-\rho_{ij}f_j \right).
\end{equation}

The framework we develop in Section~\ref{sec:geom-synchr-probl} circumvent this skew-symmetry issue with $1$-forms by defining $f_i-\rho_{ij}f_j$ as the representation of $d_{\rho}f$, a \emph{twisted global $1$-form} defined over the entire graph $\Gamma$, in the system of local trivializations of $\mathscr{B}_{\rho}\left[ F \right]$ over the open cover $\mathfrak{U}$. We then define the \emph{$\rho$-twisted codifferential $\delta_{\rho}$} that is the formal adjoint of $d_{\rho}$ with respect to inner products naturally specified on the space of twisted local $0$- and $1$-forms, and realize the graph connection Laplacian $L_1$ as the degree-zero Hodge Laplacian $\delta_{\rho}d_{\rho}:C^0 \left( \Gamma; G \right)\rightarrow C^0 \left( \Gamma; G \right)$ in the twisted de Rham cochain complex \eqref{eq:twisted_chain_complex_with_adjoint}. These constructions lead to two different characterizations of the synchronizability of $\rho$, one in Proposition~\ref{prop:twisted-diff-ker-global-sections} with a twisted de Rham cohomology group, and the other through a Hodge-type decomposition of $C^0 \left( \Gamma; F \right)$ following Theorem~\ref{thm:hodge_decomp}. This twisted Hodge theory also provides geometric insights for the GCL-based spectral relaxation algorithm and Cheeger-type inequalities in \cite{BSS2013}, as we will elaborate in Section~\ref{sec:cheeg-ineq-twist}.

\subsubsection{Learning Group Actions via Synchronizability}
\label{sec:learn-group-acti-1}

Fibre bundles are topological spaces that are product spaces locally but not necessarily globally. However, we can still look for maximal open subsets of the base space on which the fibre bundle is trivializable, and seek a decomposition of the base space into the union of such ``maximal trivializable subsets.'' This intuition motivated us to consider applying synchronization techniques to partition a graph into connected components, based on the synchronizability of a prescribed edge potential in addition to the connectivity of the graph. In Section~\ref{sec:motiv-gener-form}, we define the general problem of \emph{learning group actions} (LGA) for a set $X$, equipped with an action by group $G$, as searching for a partition of $X$ into a specified number of subsets and learning a new group action on $X$ that is cycle-consistent within each partition; the cycle-consistency need not be maintained for a cycle of actions across multiple partitions.
The LGA problem is then specialized to the setting of synchronization problems (\emph{learning group actions by synchronization}, or LGAS), for which we define a quantity that measures the performance of graph partitions based on the synchronizability of a fixed edge potential on the entire graph $\Gamma$, motivated by the classical normalized graph cut algorithm. Finally, we propose in Section~\ref{sec:heur-algor-learn} a heuristic algorithm for LGAS, building upon iteratively applying existing synchronization techniques hierarchically and performing spectral clustering on the edge-wise frustration.

\subsection{Broader Context and Related Work}
\label{sec:other-related-work}

The synchronization problem has been studied for a variety of choices of topological groups $G$ and spaces $F$. Typically these formulations fall into our principal bundle setting and require an underlying  manifold structure. We give a brief summary for the correspondences between choices of $G$ and $F$ in our framework and the practical instances in the synchronization literature: In \cite{BSS2013} $G=F=O \left( d \right)$ and $G=O \left( d \right)$, $F=\mathbb{S}^{d-1}$ are studied; the case $G=F=\SO \left( d \right)$ is examined in \cite{BSAB2014,WangSinger2013}; orientation detection or $G=F=O \left( 1 \right)$ is considered in \cite{SingerWu2011ODM}; cryo-electron microscopy concerns $G=F=\SO \left( 2 \right)$ \cite{SS2012,SZSH2011}; globally aligning three-dimensional scans is the case where $G=F=\SO \left( 3 \right)$, and so is \cite{TSR2011}. 

Our formulation of the synchronization problem considers a broader class of geometric structure than what has been proposed in the literature. Specifically, we do not require a manifold assumption (as the problem is modeled on topological spaces), or a principal bundle structure (as we can work with any associated bundle), or compact and/or commutative structure groups. For comparison, the vector and principal bundle framework developed in \cite{SingerWu2012VDM,SingerWu2013} relies on manifold assumptions for the base, fibre, and total space, as well as an (extrinsic) isometric embedding into an ambient Euclidean space for locally estimating tangent spaces and parallel-transports; similarly for recent work \cite{HDM2016} extending this geometric framework to smooth bundles with general fibre types. Both Vector Diffusion Maps (VDM) \cite{SingerWu2012VDM} and Horizontal Diffusion Maps (HDM) \cite{HDM2016} can be viewed as attempts at combining the idea of synchronization with diffusion geometry \cite{CoifmanLafon2006,CoifmanLafonLMNWZ2005PNAS1,CoifmanLafonLMNWZ2005PNAS2}. The geometry underlying the synchronization problem related to cryo-electron microscopy \cite{SS2012,SZSH2011} can be described using the language of $\SO \left( 2 \right)$-principal bundles, as recently demonstrated in \cite{YeLim2017}, with a \v{C}ech cohomology approach through Leray's Theorem which depends essentially on the commutativity of the structure group $\SO \left( 2 \right)$, whereas most synchronization problems of practical interest involve noncommutative structure groups. The Non-Unique Games (NUG) and SDP relaxation framework established in \cite{BCS2015NUG,BCSZ2014} assumes the compactness of the structure group $G$, and resorts to a compactification procedure that maps a subset of $G$ to another compact group for synchronization problems over non-compact groups such as the Euclidean group in the motion estimation problem in computer vision \cite{MP2007,HTDL2013}.

The graph twisted Hodge theory we develop in Section \ref{sec:geom-synchr-probl} also has ties to recent developments in discrete Hodge theory \cite{JLYY2011,Lim2015,MS2016,PR2016,PRT2015,SKM2014}. In \cite{Kenyon2011} Laplacians on one- and two-dimensional vector bundles on graphs were used to understand the relation between graphs embedded on surfaces and cycle rooted spanning forests, generalizing the relation between spanning trees and graph Laplacians. Variants of the graph Laplacian have been used to relate ranking problems to synchronization problems in \cite{Cucuringu,Fanuel1}; a combinatorial Laplacians based on a discrete Hodge theory on directed graphs has been successfully applied to decompose ranking problems and games into ``gradient-like'' versus ``cyclic'' components in \cite{JLYY2011,Candogan}, and to visualize directed networks in \cite{Fanueletal}. Discrete Laplacians on simplicial complexes have been proposed, and spectral properties such as Cheeger inequalities and stationary distributions of random walks have been examined in a series of papers \cite{JLYY2011,Lim2015,MS2016,PR2016,PRT2015,SKM2014}. The cocycle conditions \eqref{eq:synchronizability-characterization} are also characterized in geometry processing and computer vision recently for analysis of shape or image collections \cite{NBWYG2011,HuangGuibas2013,WangHuangGuibas2013,HWG2014}, where they are also known as \emph{cycle-consistency conditions}.


The geometric and topological tools we utilize in this paper, namely those involving the topology and geometry of fibre bundles, are covered in most standard textbooks, e.g. \cite{Steenrod1951,Taubes2011DG,BottTu1982}. After Milnor's seminal work on flat connections on a Riemannian manifold \cite{Milnor1958}, the relation between flat bundles and their holonomy homomorphisms became widely known \cite{KamberTondeur1967,Lue1976,Goldman1982,Vassiliou1983,Corlette1988} and is still attracting interests of modern mathematical physicists (e.g. Higgs bundles and representation of the fundamental group \cite{Hitchin1987,Simpson1992}). In a completely topological setup, flat bundles can be characterized as fibrations with a homotopy-invariant lifting property (a topological analogue of parallel-transport in differential geometry); essentially the same correspondence between flat bundles and holonomy homomorphisms is already known to Steenrod \cite{Steenrod1951} and referred to as characteristic classes of flat bundles \cite{Dupont1976,Lue1976,Goldman1982,Esnault1988}. In a broader context, the correspondence between flat bundles (integrable connections) and local systems (locally constant sheaves) is a special case of the \emph{Riemann-Hilbert correspondence}, a higher-dimensional generalization of Hilbert's twenty-first problem \cite{Bolibrukh1990,AnosovBolibruch1994}. This correspondence fostered important developments in algebraic geometry, including $D$-modules \cite{Kashiwara1979,Kashiwara1984,Mebkhout1980,Mebkhout1984} and Deligne's work on integrable algebraic connections \cite{Deligne1970}. Understanding the representation varieties of the fundamental groups of Riemann surfaces into Lie groups has been of interest to algebraic geometers, geometric topologists, and representation theorists in the past decades \cite{Labourie2013,Goldman2006}.

The rest of this paper is organized as follows. Section~\ref{sec:geom-coho-sync} establishes the geometric framework for synchronization problems, relating the synchronizability of an edge potential $\rho$ to (a) the triviality of the holonomy of the flat principal bundle $\mathscr{B}_{\rho}$, in Section~\ref{sec:synchr-probl}; (b) the dimension of the zero-th degree twisted cohomology group of a $\rho$-twisted de Rham cochain complex, as well as the dimension of the kernel of the zero-th degree twisted Hodge Laplacian, in Section~\ref{sec:geom-synchr-probl}. Section~\ref{sec:learn-group-acti} defines the problem of learning group actions, and proposes SynCut, a heuristic algorithm based on synchronization and graph spectral techniques. Numerical simulations indicating the effectiveness of SynCut is performed on synthetic datasets in Section~\ref{sec:numer-exper-synth} and on a real dataset of a collection of anatomical surfaces in Section~\ref{sec:example}. A few problems of potential interest are listed in Section~\ref{sec:concl-discu} for future exploration.

\section{Synchronization as a Cohomology Problem}
\label{sec:geom-coho-sync}

This section concerns two geometric aspects of the synchronization problem. Section~\ref{sec:synchr-probl} links the synchronizability of an edge potential to the triviality of the holonomy group of a flat principal bundle. Section~\ref{sec:geom-synchr-probl} establishes a discrete twisted Hodge theory that naturally realizes the graph connection Laplacian as the lowest-order Hodge Laplacian of a twisted de Rham cochain complex. The obstruction to synchronizability of an edge potential turns out to be a cohomology group in the twisted de Rham complex; the degeneracy of this cohomology group is reflected in the spectral information of the twisted Hodge Laplacian, which also provides a geometric interpretation for the relaxation techniques used in solving synchronization problems.

\subsection{Holonomy and Synchronizability}
\label{sec:synchr-probl}

The two main results in this section, Corollary~\ref{cor:cor-trivial-hol} and Theorem~\ref{lem:holonomy_homomorphism_bijectivity}, relate the synchronizability of an edge potential $\rho$ to the triviality of the holonomy group of the synchronization principal bundle $\mathscr{B}_{\rho}$. Our motivation is as follows. Recall from Proposition~\ref{prop:synchronizability_flat_bundle_one_skelenton} that $G$-synchronization problems on a fixed graph $\Gamma$ with different edge potentials are in one-to-one correspondence with flat principal $G$-bundles over $\Gamma$, and the synchronizability of an edge potential translates into the triviality of the bundle; whereas the one-to-one correspondence is stated at the level of local coordinates in Proposition~\ref{prop:synchronizability_flat_bundle_one_skelenton}, the triviality of the principal bundle is a property of the equivalence classes of flat principal G-bundles, which suggests the same level of abstraction for synchronizability. As will be precisely stated later in this subsection, (appropriately defined) equivalence classes of edge potentials form the \emph{moduli space} of flat $G$-bundles on $\Gamma$, and a given edge potential is synchronizable if and only if it belongs to the same equivalence class as the trivial edge potential that assigns each edge of $\Gamma$ the identity element $e\in G$. The \emph{holonomy group}, or the equivalence classes of \emph{holonomy homomorphisms} from the fundamental group of $\Gamma$ to the structure group $G$, is a faithful representation of the moduli space of flat principal $G$-bundles on $\Gamma$; we thus will be able to detect the synchronizability of an edge potential through the triviality of the associated holonomy group. This argument is reminiscent of classical classification theorems of (1) principal bundles with disconnected structure groups in topology (see e.g. \cite[\S13.9]{Steenrod1951}); (2) flat connections in differential geometry (see e.g. \cite[\S13.6]{Taubes2011DG}); and (3) holomorphic vector bundles of fixed rank and degree (see e.g. \cite[Appendix \S2.1]{Wells2007GTM65}) in complex geometry.

For $f\in C^0 \left( \Gamma; G \right)$, $\rho\in C^1 \left( \Gamma; G \right)$, we say that \emph{$f$ and $\rho$ are compatible on edge $\left( i,j \right)\in E$} if $f_i=\rho_{ij}f_j$, and that \emph{$f$ and $\rho$ are compatible on graph $\Gamma$} if they are compatible on every edge in $\Gamma$. Recall from Definition~\ref{defn:sync-bundle} that we write $\mathscr{B}_{\rho}$ for the synchronization principal bundle associated with $\rho\in C^1 \left( \Gamma;G \right)$, as described in Proposition~\ref{prop:synchronizability_flat_bundle_one_skelenton}. Equivalently, it is often convenient to view $\mathscr{B}_{\rho}$ as a $G_{\delta}$-bundle on $\Gamma$, where $G_{\delta}$ is the same group as $G$ but equipped with the discrete topology. For $\rho,\tilde{\rho}\in\Gamma$, the $G_{\delta}$-bundles $\mathscr{B}_{\rho}$, $\mathscr{B}_{\tilde{\rho}}$ are \emph{equivalent}, denoted as $\mathscr{B}_{\rho}\sim\mathscr{B}_{\tilde{\rho}}$, if a \emph{bundle map} (see \cite[\S2.5]{Steenrod1951}) exists between $\mathscr{B}_{\rho}$ and $\mathscr{B}_{\tilde{\rho}}$ that induces the identity map on the base space $\Gamma$. Since $\mathscr{B}_{\rho}, \mathscr{B}_{\tilde{\rho}}$ have the same base space, fibre, and structure group, recall from \cite[Lemma 2.10]{Steenrod1951} that they are equivalent if and only if there exist continuous functions $\lambda_i:U_i\rightarrow G_{\delta}$ defined on each $U_i\in\mathfrak{U}$ such that
\begin{equation*}
  \tilde{\rho}_{ij}=\lambda_i \left( x \right)^{-1}\rho_{ij}\,\lambda_j \left( x \right), \quad \forall x\in U_i\cap U_j\neq \emptyset.
\end{equation*}
Since the topology on $G_{\delta}$ is discrete and $U_i$ is connected, $\lambda_i$ is constant on $U_i$, and defines a vertex potential by setting $f_i:=\lambda_i \left( v_i \right)$, where $v_i\in U_i$ is the $i$th vertex of $\Gamma$. This proves the following lemma:
\begin{lemma}
\label{lem:equiv-class-characterization}
Two edge potentials $\rho,\tilde{\rho}\in C^1 \left( \Gamma;G \right)$ define equivalent flat principal $G$-bundles on $\Gamma$ if and only if there exists $f\in C^0 \left( \Gamma;G \right)$ such that
\begin{equation}
  \label{eq:bundle_equivalence}
  \tilde{\rho}_{ij}=f_i^{-1}\rho_{ij}\,f_j,\quad \forall \left( i,j \right)\in E.
\end{equation}
In other words, equivalence classes of flat principal $G$-bundles on $\Gamma$ determined by edge potentials (through Proposition~\ref{prop:synchronizability_flat_bundle_one_skelenton}) are in one-to-one correspondence with equivalence classes in the orbit space $C^1 \left( \Gamma;G \right)/C^0 \left( \Gamma;G \right)$, where the right action of $C^0 \left( \Gamma;G \right)$ on $C^1 \left( \Gamma;G \right)$ is defined as
\begin{equation}
  \label{eq:vertpot_action_edgepot}
  \left[f\left(\rho\right)\right]_{ij}:=f_i^{-1}\rho_{ij}\,f_j,\quad\forall \left( i,j \right)\in E.
\end{equation}
\end{lemma}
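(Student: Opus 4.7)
The plan is to formalize the construction already sketched in the paragraph preceding the statement. The starting point is Steenrod's equivalence criterion (cited above as \cite[Lemma 2.10]{Steenrod1951}) for principal bundles sharing the same base, fibre, and structure group: viewing $\mathscr{B}_\rho$ and $\mathscr{B}_{\tilde\rho}$ as $G_\delta$-bundles on $\Gamma$, they are equivalent if and only if there is a family of continuous maps $\lambda_i:U_i\to G_\delta$, indexed by the covering $\mathfrak{U}$, satisfying
\[
  \tilde{\rho}_{ij} \;=\; \lambda_i(x)^{-1}\,\rho_{ij}\,\lambda_j(x),\qquad \forall x\in U_i\cap U_j\neq\emptyset.
\]
So the proof reduces to translating this data into a vertex potential $f\in C^0(\Gamma;G)$ and back.

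The technical heart of the argument is showing that each $\lambda_i$ must in fact be constant on $U_i$. This is exactly where the switch to the discrete topology on $G_\delta$ is decisive: since $U_i$ is the open star of vertex $i$ in $\Gamma$, and thus connected (indeed contractible), any continuous map from $U_i$ to the discrete space $G_\delta$ is constant. One can therefore set $f_i:=\lambda_i(v_i)$ for the unique vertex $v_i\in U_i$, collect the values into $f\in C^0(\Gamma;G)$, and specialize the Steenrod relation at any $x\in U_i\cap U_j$ (such $x$ exists precisely when $(i,j)\in E$, since the intersection contains the interior of the edge joining $i$ and $j$) to obtain $\tilde{\rho}_{ij}=f_i^{-1}\rho_{ij}\,f_j$ on every edge, as required.

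For the converse, given $f\in C^0(\Gamma;G)$ satisfying \eqref{eq:bundle_equivalence}, I would promote $f$ to the trivialization data by taking $\lambda_i$ to be the constant map on $U_i$ with value $f_i$. Continuity into $G_\delta$ is automatic, and the Steenrod relation holds by construction, so $\mathscr{B}_\rho\sim \mathscr{B}_{\tilde\rho}$. Finally, to pin down the orbit-space statement, I would verify that \eqref{eq:vertpot_action_edgepot} genuinely defines a right action of $C^0(\Gamma;G)$ on $C^1(\Gamma;G)$: one checks associativity, that the identity vertex potential acts trivially, and that the skew-symmetry $\rho_{ij}=\rho_{ji}^{-1}$ is preserved by $f^{-1}_i\rho_{ij}f_j$. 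The first part of the lemma then says exactly that two edge potentials lie in the same orbit of this action if and only if they determine equivalent bundles.

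I do not expect any serious obstacle. The only delicate point is the constancy of $\lambda_i$, and it is immediate from connectedness of $U_i$ together with the discreteness of $G_\delta$; everything else is bookkeeping, including the verification that the action of $C^0(\Gamma;G)$ is well defined on the subset of $C^1(\Gamma;G)$ cut out by the skew-symmetry constraint.
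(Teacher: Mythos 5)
Your proposal is correct and follows essentially the same route as the paper: invoke Steenrod's Lemma 2.10 for equivalence of bundles with common base, fibre, and structure group, then deduce constancy of each $\lambda_i$ from connectedness of $U_i$ together with the discrete topology on $G_\delta$, and set $f_i := \lambda_i(v_i)$. The only difference is that you explicitly spell out the converse direction and the verification that \eqref{eq:vertpot_action_edgepot} is a well-defined right action, both of which the paper leaves implicit.
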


\begin{remark}
  The orbit space $C^1 \left( \Gamma;G \right)/C^0 \left( \Gamma;G \right)$ is exactly the \emph{first cohomology set} $\check{H}^1 \left( \left(\Gamma, \mathfrak{U}\right), \underline{G} \right)$ for the sheaf of germs of locally constant $G$-valued functions over $\Gamma$ with respect to the open cover $\mathfrak{U}$, where $G$ is possibly nonabelian. It is thus not surprising that the orbit space should identify naturally with isomorphism classes of flat principal $G$-bundles over $\Gamma$ (see e.g. \cite[Proposition 4.1.2]{Brylinski2007} or \cite[\S 8.1]{Michor2008}).
\end{remark}

A \emph{path} in $\Gamma$ is a collection of consecutive edges in $\Gamma$. If all edges in path $\gamma$ are oriented consistently, we say $\gamma$ is an oriented path. For any oriented path $\gamma$, define $\gamma^{-1}$ the \emph{reverse} of $\gamma$ as the path in $\Gamma$ consisting of the same consecutive edges in $\gamma$ listed in the opposite order and with all orientations reversed. For an oriented path $\gamma$ consisting of consecutive edges $\{\left( i_0,i_1 \right), \left( i_1, i_2 \right), \cdots, \left( i_{N-1}, i_N \right)\}$ set
\begin{equation}
\label{eq:holonomy_map_defn}
  \hol_{\rho} \left( \gamma \right) = \left(\rho_{i_N,i_{N-1}}\rho_{i_{N-1},i_{N-2}}\cdots\rho_{i_2,i_1}\rho_{i_1,i_0}\right)^{-1}=\rho_{i_0,_1}\rho_{i_1,i_2}\cdots\rho_{i_{N-2},i_{N-1}}\rho_{i_{N-1},i_N} \in G,
\end{equation}
then $\hol_{\rho}$ maps paths in $\Gamma$ to elements of group $G$. For two oriented paths $\gamma$, $\gamma'$ such that the ending vertex of $\gamma$ coincides with the starting vertex of $\gamma'$, define $\gamma\circ \gamma'$ as the oriented path constructed by concatenating $\gamma'$ with $\gamma$. It is then straightforward to verify by definition that
\begin{equation}
  \label{eq:path_group_op}
  \hol_{\rho} \left( \gamma^{-1} \right)= \hol_{\rho} \left( \gamma \right)^{-1},\qquad \hol_{\rho}\left( \gamma\circ\gamma' \right)= \hol_{\rho} \left( \gamma \right)\hol_{\rho} \left( \gamma' \right).
\end{equation}
If an oriented path starts and ends at the same vertex $v$, we call it an \emph{oriented loop based at vertex $v$}. Denote $\Omega_v$ for all loops based at $v\in V$ in $\Gamma$, including the single vertex set $\left\{ v \right\}$ viewed as the identity loop based at $v$. Clearly, $\Omega_v$ carries a group structure with the loop concatenation and reversion operations. The equalities in \eqref{eq:path_group_op} ensures $\hol_{\rho} \left( \left\{ v \right\} \right)=e$ and that $\hol_{\rho}:\Omega_v\rightarrow G$ is a group homomorphism. Moreover, since graph $\Gamma$ does not contain any $2$-simplices, two oriented loops based at $v$ are homotopic if and only if they differ by a collection of disconnected trees in $\Gamma$, in which every tree gets mapped to $e\in G$ under the map $\hol_{\rho}$; the map $\hol_{\rho}:\Omega_v\rightarrow G$ thus descends naturally to a map to $G$ from $\pi_1 \left( \Gamma, v \right)$, the \emph{fundamental group of $\Gamma$ based at $v$}. Unless confusions arise, we shall also denote the descended map as $\hol_{\rho}$ for simplicity of notation. Lemma~\ref{lem:holonomy_group} below summarizes these discussions.

\begin{lemma}
\label{lem:holonomy_group}
  The map $\hol_{\rho}:\pi_1 \left( \Gamma, v \right)\rightarrow G$ defined in \eqref{eq:holonomy_map_defn} is a group homomorphism. In particular, the image of this homomorphism is a subgroup of $G$. 
\end{lemma}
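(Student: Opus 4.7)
The plan is to decompose the claim into two parts: (i) show that $\hol_{\rho}$ defines a group homomorphism on the concatenation group $\Omega_v$, and (ii) verify that this map is constant on homotopy classes, so it descends to a homomorphism on $\pi_1(\Gamma,v)$. Since the image of any group homomorphism is a subgroup, the ``in particular'' clause is then immediate.

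For (i), I would verify the two identities in \eqref{eq:path_group_op} directly from the product formula \eqref{eq:holonomy_map_defn}, using only $\rho_{ij} = \rho_{ji}^{-1}$ and the associativity of $G$. Reversing a path $\gamma = \{(i_0,i_1),\ldots,(i_{N-1},i_N)\}$ swaps each factor $\rho_{i_k,i_{k+1}}$ with $\rho_{i_{k+1},i_k} = \rho_{i_k,i_{k+1}}^{-1}$ and inverts the order, yielding the group inverse; concatenating two compatible paths telescopes into a single product of consecutive $\rho$'s. The trivial loop $\{v\}$ corresponds to the empty product $e\in G$, so $\{v\}$ is the identity of $\Omega_v$ and is mapped to the identity of $G$. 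Together these facts establish that $\hol_{\rho}:\Omega_v\to G$ is a group homomorphism.

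For (ii), I would invoke the standard fact that, for the $1$-dimensional CW complex $\Gamma$, the fundamental group $\pi_1(\Gamma,v)$ is the free group on the edges of $\Gamma$ outside a spanning tree rooted at $v$. Equivalently, two $v$-based oriented loops represent the same class in $\pi_1(\Gamma,v)$ if and only if they differ by a finite sequence of elementary moves consisting of inserting or deleting a ``backtracking'' pair $(i,j)(j,i)$ along a single edge; these are exactly the ``disconnected trees'' alluded to in the paragraph preceding the lemma. Each such elementary move inserts or removes the factor $\rho_{ij}\rho_{ji} = \rho_{ij}\rho_{ij}^{-1} = e$ in the product \eqref{eq:holonomy_map_defn}, so $\hol_{\rho}$ is unaffected. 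Consequently $\hol_{\rho}$ factors through a well-defined map on $\pi_1(\Gamma,v)$, which remains a group homomorphism because the canonical surjection $\Omega_v \to \pi_1(\Gamma,v)$ is itself a group quotient.

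The only conceptual subtlety is step (ii): one must rigorously identify homotopy equivalence of edge-loops in $\Gamma$ with the combinatorial notion of reduction modulo backtracking. This can be handled either by appealing to the classical identification of $\Gamma$ (up to homotopy) with a wedge of circles obtained by contracting a spanning tree, or by a direct combinatorial argument on edge-paths in a $1$-complex. Once this point is settled, every remaining step is a formal computation, and the conclusion that $\hol_{\rho}\bigl(\pi_1(\Gamma,v)\bigr) \leq G$ follows from the general principle that homomorphic images are subgroups.
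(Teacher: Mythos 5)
Your proposal is correct and follows essentially the same two-step argument as the paper: first verify the multiplicativity identities in \eqref{eq:path_group_op} from the product formula \eqref{eq:holonomy_map_defn} to get a homomorphism $\Omega_v\to G$, then observe that the map is constant on homotopy classes of $v$-based loops (the paper's ``differ by a collection of disconnected trees'' is precisely the backtracking-insertion/deletion moves you describe) so that it descends to $\pi_1(\Gamma,v)$. You make the homotopy step slightly more precise by invoking the spanning-tree/reduced-word description of $\pi_1$ of a graph, but conceptually the argument is the same as the paper's.
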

We will refer to the group homomorphism $\hol_{\rho}:\pi_1 \left( \Gamma, v \right)\rightarrow G$ as the \emph{holonomy homomorphism at $v\in \Gamma$} for a $G$-synchronization problem with prescribed edge potential $\rho\in C^1 \left( \Gamma;G \right)$. Define the \emph{holonomy group at $v\in \Gamma$} of edge potential $\rho$ as the image
\begin{equation*}
  \Hol_{\rho}\left( v \right):=\hol_{\rho} \left( \pi_1 \left( \Gamma,v \right) \right).
\end{equation*}
From a different point of view, Lemma~\ref{lem:holonomy_group} assigns an element of $\Hom \left( \pi_1 \left( \Gamma,v \right), G \right)$ to each element of $C^1 \left( \Gamma;G \right)$, where $\Hom \left( \pi_1 \left( \Gamma,v \right), G \right)$ is the set of group homomorphisms from $\pi_1 \left( \Gamma,v \right)$ to $G$.

\begin{lemma}
\label{lem:lem-holonomy-iso}
If $\Gamma$ is connected, the holonomy groups $\Hol_{\rho}\left( v \right)$, $\Hol_{\rho}\left( w \right)$ at $v,w\in V$ are conjugate to each other as subgroups of $G$.
\end{lemma}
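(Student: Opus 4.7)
The plan is to mimic the classical basepoint-change argument from algebraic topology: in a connected space, two fundamental groups at different basepoints are identified via conjugation by a chosen path between them, and any multiplicative invariant defined on loops transports under the same conjugation. I will realize this translation at the level of $G$ by exhibiting an explicit conjugating element constructed from $\hol_\rho$ applied to a path connecting $v$ and $w$.

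The first step is to exploit connectedness of $\Gamma$ to pick an oriented path $\gamma$ in $\Gamma$ starting at $v$ and ending at $w$, and set $g := \hol_\rho(\gamma) \in G$. The key calculation is then as follows: for any oriented loop $\ell \in \Omega_w$ based at $w$, the concatenation $\gamma \circ \ell \circ \gamma^{-1}$ is an oriented loop in $\Omega_v$ based at $v$, and the multiplicative identities \eqref{eq:path_group_op} give
\begin{equation*}
  \hol_\rho\bigl(\gamma \circ \ell \circ \gamma^{-1}\bigr) = \hol_\rho(\gamma)\,\hol_\rho(\ell)\,\hol_\rho(\gamma^{-1}) = g\,\hol_\rho(\ell)\,g^{-1}.
\end{equation*}
Letting $\ell$ range over $\Omega_w$ (equivalently, over representatives of all classes in $\pi_1(\Gamma, w)$, using the descent observation stated just before Lemma~\ref{lem:holonomy_group}), this yields the inclusion $g\,\Hol_\rho(w)\,g^{-1} \subseteq \Hol_\rho(v)$.

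For the reverse inclusion I would repeat the same construction with the roles of $v$ and $w$ swapped and with $\gamma$ replaced by its reverse $\gamma^{-1}$, whose holonomy is $g^{-1}$ by \eqref{eq:path_group_op}. This gives $g^{-1}\,\Hol_\rho(v)\,g \subseteq \Hol_\rho(w)$, equivalent to $\Hol_\rho(v) \subseteq g\,\Hol_\rho(w)\,g^{-1}$. Combining the two containments yields the equality $\Hol_\rho(v) = g\,\Hol_\rho(w)\,g^{-1}$, which is the asserted conjugacy in $G$. No serious obstacle is expected here; the only point deserving a brief verification is that conjugation by $[\gamma]$ is well-defined on the descended fundamental group $\pi_1(\Gamma, w)$ rather than merely on $\Omega_w$, but this is immediate from the paper's remark that loops differing by tree components have identical holonomy. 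Note also that although the conjugating element $g$ depends on the choice of $\gamma$, the conjugacy class of the holonomy group does not, which is precisely the content of the lemma.
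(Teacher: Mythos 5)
Your proof is correct and follows essentially the same approach as the paper: pick a path $\gamma$ between $v$ and $w$, use the multiplicativity of $\hol_\rho$ along paths \eqref{eq:path_group_op} to show that conjugation by $\hol_\rho(\gamma)$ transports one holonomy group onto the other, which is exactly the base-point-change argument the paper invokes via $\pi_1(\Gamma,v)=\gamma^{-1}\pi_1(\Gamma,w)\gamma$. The only difference is cosmetic --- you verify the conjugation by two set inclusions, whereas the paper applies $\hol_\rho$ directly to the conjugated fundamental group in one line.
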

\begin{proof}
Let $\gamma$ be a path in $\Gamma$ connecting vertex $v$ to vertex $w$. The fundamental groups of $\Gamma$ based at $v,w$ are related by conjugation $\pi_1 \left( \Gamma,v \right)=\gamma^{-1}\pi_1 \left( \Gamma,w \right)\gamma$, thus
  \begin{equation*}
    \Hol_{\rho} \left( v \right) = \Hol_{\rho} \left( \gamma^{-1}\pi_1 \left( \Gamma,w \right)\gamma \right) = \Hol_{\rho} \left( \gamma^{-1}\right)\Hol_{\rho}\left(\pi_1 \left( \Gamma,w \right)\right)\Hol_{\rho}\left(\gamma \right) = \Hol_{\rho} \left( \gamma\right)^{-1}\Hol_{\rho}\left(w\right)\Hol_{\rho}\left(\gamma \right).
  \end{equation*}
\end{proof}
Define the \emph{holonomy} of $\rho\in C^1 \left( \Gamma; G \right)$ on a connected graph $\Gamma$ as the following conjugacy class (orbit of the action by conjugation) of subgroups of $G$:
\begin{equation}
  \label{eq:defn_holonomy_group_for_graph}
  \Hol_{\rho} \left( \Gamma \right):=\left\{ g^{-1}\Hol_{\rho} \left( v \right)g\,\big|\,\textrm{for all }g\in G, \textrm{and an arbitrarily chosen but fixed vertex }v\in V\right\}.
\end{equation}
By Lemma~\ref{lem:lem-holonomy-iso}, the definition of $\Hol_{\rho} \left( \Gamma \right)$ is independent of the choice of a fixed base $v\in V$. We say that the holonomy of $\rho\in C^1 \left( \Gamma;G \right)$ is \emph{trivial} on a connected graph $\Gamma$ if $\Hol_{\rho} \left( \Gamma \right)$ contains only the trivial subgroup $\left\{ e \right\}$ for all $g\in G$. Under the connectivity assumption of $\Gamma$, the triviality of the global invariant $\Hol_{\rho} \left( \Gamma \right)$ can be completely determined by its seemingly ``local'' counterparts; see Lemma~\ref{lem:triviality_global_holonomy} below. Of course, holonomy is not local in nature, as $\Hol_{\rho} \left( v \right)$ encodes the information of all oriented loops based at vertex $v$ and in principle ``touches'' the entire space $\Gamma$.
\begin{lemma}
\label{lem:triviality_global_holonomy}
  If $\Gamma$ is connected, the following statements are equivalent:
\begin{enumerate}[(i)]
\item $\Hol_{\rho} \left( \Gamma \right)$ is trivial;
\item $\Hol_{\rho} \left( v \right)= \left\{ e \right\}$ for some vertex $v\in V$;
\item $\Hol_{\rho} \left( v \right)= \left\{ e \right\}$ for all vertices $v\in V$.
\end{enumerate}
\end{lemma}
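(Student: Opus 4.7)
The plan is to establish a short cyclic chain (iii)$\Rightarrow$(ii)$\Rightarrow$(i)$\Rightarrow$(iii), with each link being an almost immediate consequence of the conjugation relation already recorded in Lemma~\ref{lem:lem-holonomy-iso} together with the bookkeeping in the definition \eqref{eq:defn_holonomy_group_for_graph}. The implication (iii)$\Rightarrow$(ii) is vacuous, as $V\neq\emptyset$, so the real content lies in the other two steps.

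For (ii)$\Rightarrow$(iii), I would fix a vertex $v\in V$ at which $\Hol_{\rho}(v)=\{e\}$ and let $w\in V$ be arbitrary. Since $\Gamma$ is connected, there exists an oriented path $\gamma$ from $v$ to $w$; Lemma~\ref{lem:lem-holonomy-iso} then yields the identity
\[
\Hol_{\rho}(v)=\hol_{\rho}(\gamma)^{-1}\,\Hol_{\rho}(w)\,\hol_{\rho}(\gamma).
\]
Substituting $\Hol_{\rho}(v)=\{e\}$ and conjugating by $\hol_{\rho}(\gamma)$ forces $\Hol_{\rho}(w)=\{e\}$. Because $w$ was arbitrary, (iii) follows. (Note: the existence of a path between any two vertices of a connected graph, and hence of an oriented path once edge orientations are chosen consistently along it, is the only place where connectivity is used.)

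For (iii)$\Rightarrow$(i), I would observe that (iii) gives $\Hol_{\rho}(v_{0})=\{e\}$ for the fixed base vertex $v_{0}$ appearing in \eqref{eq:defn_holonomy_group_for_graph}, and conjugation of the trivial subgroup by any $g\in G$ returns the trivial subgroup, so the conjugacy class $\Hol_{\rho}(\Gamma)$ degenerates to $\{\{e\}\}$. Conversely, for (i)$\Rightarrow$(iii), I would simply pick $g=e$ in the defining set \eqref{eq:defn_holonomy_group_for_graph} to extract $\Hol_{\rho}(v_{0})=\{e\}$ at the base vertex, and then invoke the implication (ii)$\Rightarrow$(iii) established above to propagate triviality to every vertex.

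There is no substantive obstacle: all three statements are really different packagings of the same piece of information (that every $\rho$-product along every closed loop in $\Gamma$ equals $e$), and the argument is essentially that the property ``being the trivial subgroup'' is preserved under conjugation and that connectivity allows us to transport holonomy data between basepoints. The mildest caution is to make sure the conjugation formula from Lemma~\ref{lem:lem-holonomy-iso} is applied on the correct side, but this is routine.
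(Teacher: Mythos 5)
Your proof is correct. The paper states this lemma without an explicit proof, treating it as an immediate consequence of Lemma~\ref{lem:lem-holonomy-iso} and the definition~\eqref{eq:defn_holonomy_group_for_graph}, which is precisely the route you take: (iii)$\Rightarrow$(ii) is vacuous, (ii)$\Rightarrow$(iii) transports triviality between basepoints via the conjugation relation $\Hol_{\rho}(v)=\hol_{\rho}(\gamma)^{-1}\Hol_{\rho}(w)\,\hol_{\rho}(\gamma)$, (iii)$\Rightarrow$(i) notes that conjugation preserves the trivial subgroup, and (i)$\Rightarrow$(iii) extracts $\Hol_{\rho}(v_0)=\{e\}$ at the base vertex by taking $g=e$ and then reuses (ii)$\Rightarrow$(iii). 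This is the intended argument.
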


Similar to the definition of $\Hol_{\rho}\left( \Gamma \right)$ in \eqref{eq:defn_holonomy_group_for_graph}, the fundamental group $\pi_1 \left( \Gamma \right)$ of a connected graph $\Gamma$ is also determined by the fundamental group $\pi_1 \left( \Gamma,v_0 \right)$ at any vertex $v_0\in V$ up to conjugacy classes. Therefore, Lemma~\ref{lem:holonomy_group} and Lemma~\ref{lem:lem-holonomy-iso} together assign to each $\rho\in C^1 \left( \Gamma;G \right)$ an equivalence class in $\Hom \left( \pi_1 \left( \Gamma \right), G \right)/G$, in which $G$ acts on $\Hom \left( \pi_1 \left( \Gamma \right), G \right)$ by the inner automorphisms of $G$
\begin{equation*}
  \phi\mapsto g^{-1}\phi g,\quad\forall \phi\in \Hom \left( \pi_1 \left( \Gamma \right), G \right), g\in G.
\end{equation*}
In other words, Lemma~\ref{lem:holonomy_group} and Lemma~\ref{lem:lem-holonomy-iso} guarantee a well-defined map $\Hol:\Omega^1 \left( \Gamma; G \right)\rightarrow\Hom \left( \pi_1 \left( \Gamma \right), G \right)/G$. Furthermore, note in equation \eqref{eq:holonomy_map_defn} that $\Hol$ is invariant under the right action \eqref{eq:vertpot_action_edgepot} of $ C^{0}\left(\Gamma;G\right)$ on $C^1 \left( \Gamma; G \right)$,
thus $\Hol$ naturally descends to a map from $C^{1} \left( \Gamma;G \right)/ C^{0}\left(\Gamma;G\right)$ to $\Hom \left( \pi_1 \left( \Gamma \right), G \right)/G$. The space $\Hom \left( \pi_1 \left( \Gamma \right), G \right)/G$ is known as the \emph{representation variety} of the fundamental group of $\Gamma$ (the free product of a finite number of copies of $\mathbb{Z}$) into $G$. To simplify the exposition, we shall use the same notation $\Hol$ to denote its quotient map induced by the canonical projection $C^{1} \left( \Gamma;G \right)\rightarrow C^{1} \left( \Gamma;G \right)/ C^{0}\left(\Gamma;G\right)$. Theorem~\ref{lem:holonomy_homomorphism_bijectivity} below establishes the bijectivity of the quotient map.

\begin{theorem}
\label{lem:holonomy_homomorphism_bijectivity}
If $\Gamma$ is connected, the map $\Hol:C^{1} \left( \Gamma;G \right)/ C^{0}\left(\Gamma;G\right)\rightarrow \Hom \left( \pi_1 \left( \Gamma \right), G \right)/G$ defined as $\Hol \left( \left[ \rho \right] \right)=\left[ \hol_{\rho} \right]$ is bijective. Moreover, $\Hom \left( \pi_1 \left( \Gamma \right), G \right)/G$ is in one-to-one correspondence with equivalence classes of flat principal $G$-bundles $\mathscr{B}_{\rho}$ with $\rho\in C^{1} \left( \Gamma;G \right)$.
\end{theorem}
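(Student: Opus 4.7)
The plan is to prove bijectivity by exploiting a spanning tree gauge of $\Gamma$: any $\rho\in C^1(\Gamma;G)$ can be normalized, under the right action \eqref{eq:vertpot_action_edgepot}, to an edge potential supported only on non-tree edges, at which point the holonomy homomorphism becomes essentially trivial to read off. First I would check well-definedness of $\Hol$ on $C^1(\Gamma;G)/C^0(\Gamma;G)$: given $f\in C^0(\Gamma;G)$ and any oriented loop $\gamma=(i_0,i_1,\dots,i_N=i_0)$, a direct telescoping computation with \eqref{eq:holonomy_map_defn} shows $\hol_{f(\rho)}(\gamma)=f_{i_0}^{-1}\hol_\rho(\gamma)f_{i_0}$, so $[\hol_{f(\rho)}]=[\hol_\rho]$ in $\Hom(\pi_1(\Gamma),G)/G$.

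For injectivity, fix a spanning tree $T\subset\Gamma$ (which exists by connectivity) and a basepoint $v_0\in V$. For each $i\in V$ let $\gamma_i$ denote the unique oriented path in $T$ from $v_0$ to $i$, with $\gamma_{v_0}$ the constant loop. Given $\rho\in C^1(\Gamma;G)$, put $f_i:=\hol_\rho(\gamma_i)^{-1}$; then for every tree edge $(i,j)\in T$ one has $\hol_\rho(\gamma_j)=\hol_\rho(\gamma_i)\rho_{ij}$ (or its inverse, depending on orientation), which gives $[f(\rho)]_{ij}=f_i^{-1}\rho_{ij}f_j=e$. Since $f_{v_0}=e$, the identity $\hol_{f(\rho)}(\alpha)=f_{v_0}^{-1}\hol_\rho(\alpha)f_{v_0}=\hol_\rho(\alpha)$ holds for every $v_0$-based loop $\alpha$. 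Thus every class in $C^1(\Gamma;G)/C^0(\Gamma;G)$ has a representative $\rho'$ that is trivial on $T$ and whose holonomy homomorphism coincides with $\hol_\rho$. For any non-tree edge $(i,j)$ the loop $\alpha_{ij}:=\gamma_i\circ e_{ij}\circ\gamma_j^{-1}$ satisfies $\hol_{\rho'}(\alpha_{ij})=\rho'_{ij}$. Now, given $\rho,\tilde\rho$ with $[\hol_\rho]=[\hol_{\tilde\rho}]$, normalize each to $\rho',\tilde\rho'$ trivial on $T$; there is some $g\in G$ with $\hol_{\tilde\rho'}=g^{-1}\hol_{\rho'}g$. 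Taking the constant vertex potential $h_i\equiv g$ yields $[h(\rho')]_{ij}=g^{-1}\rho'_{ij}g$, which equals $e$ on $T$ and equals $g^{-1}\hol_{\rho'}(\alpha_{ij})g=\hol_{\tilde\rho'}(\alpha_{ij})=\tilde\rho'_{ij}$ on the remaining edges. Composing gauge transformations gives $[\rho]=[\tilde\rho]$.

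For surjectivity, I would exploit the well-known fact that $\pi_1(\Gamma,v_0)$ is the free group on the family of loops $\{\alpha_{ij}\}$ indexed by the non-tree edges of $\Gamma$ (one generator for each independent cycle of $\Gamma$). Given any $\phi\in\Hom(\pi_1(\Gamma,v_0),G)$, define $\rho\in C^1(\Gamma;G)$ by $\rho_{ij}:=e$ for $(i,j)\in T$ and $\rho_{ij}:=\phi(\alpha_{ij})$, $\rho_{ji}:=\phi(\alpha_{ij})^{-1}$, for non-tree edges. Then $\hol_\rho(\alpha_{ij})=\phi(\alpha_{ij})$ on every free generator, so the two homomorphisms agree, and $\Hol([\rho])=[\phi]$. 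Combining with Lemma~\ref{lem:equiv-class-characterization}, which already identifies $C^1(\Gamma;G)/C^0(\Gamma;G)$ with the set of equivalence classes of flat principal $G$-bundles $\mathscr{B}_\rho$, the composition yields the claimed bijection with $\Hom(\pi_1(\Gamma),G)/G$.

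The main obstacle I anticipate is keeping the bookkeeping of the two successive gauge transformations in injectivity straight --- in particular, verifying that the outer conjugation by $g$ is itself realized as the action of the \emph{constant} vertex potential $h_i\equiv g$, and confirming that the holonomy-preserving normalization onto the spanning tree really produces $\hol_{\rho'}=\hol_{\rho}$ as maps (not merely up to conjugacy). The non-canonical choice of $T$ and $v_0$ is harmless because both quotients are defined up to the relevant $G$-action; nevertheless, it is worth spelling out explicitly that the construction is independent of these choices at the level of equivalence classes.
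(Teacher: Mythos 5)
Your proof is correct, and it takes a somewhat different route from the paper's, so it is worth comparing the two. The paper constructs a candidate inverse of $\Hol$ by fixing a basepoint $v_0$ and an arbitrary family of oriented paths $\gamma_{0i}$ from $v_0$ to each vertex $i$, setting $\rho_{ij}:=\chi(\gamma_{0i}\circ(i,j)\circ\gamma_{0j}^{-1})$, then verifying (a) the resulting class $[\rho]$ is independent of the choice of paths, and (b) $\hol_\rho=\chi$ exactly, so that $\Hol$ applied to the constructed class returns $[\chi]$. This establishes surjectivity and builds all the ingredients for injectivity, but the left-inverse verification (that applying the candidate inverse to $\Hol([\rho])$ returns $[\rho]$) is left implicit. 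You instead rigidify the path-choosing step to a spanning tree $T$, which yields a canonical \emph{gauge-fixed representative} $\rho'$ that is identically $e$ on all tree edges and satisfies $\hol_{\rho'}=\hol_\rho$ on the nose (not merely up to conjugacy, thanks to $f_{v_0}=e$). Once in this normal form, each non-tree edge value $\rho'_{ij}$ is literally the holonomy of the fundamental loop $\alpha_{ij}$, and $\pi_1(\Gamma,v_0)$ being free on $\{\alpha_{ij}\}$ makes both injectivity (via the further conjugation by the constant vertex potential $h\equiv g$) and surjectivity immediate. Your argument therefore proves injectivity explicitly, which the paper's proof glides over, at the cost of invoking the free-group description of $\pi_1(\Gamma)$; the paper's proof avoids a fixed spanning tree and is essentially a ``coordinate-free'' version of the same idea. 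One small point worth flagging in your writeup: when you write that for a tree edge $(i,j)$ one has $\hol_\rho(\gamma_j)=\hol_\rho(\gamma_i)\rho_{ij}$ ``or its inverse, depending on orientation,'' the reader would benefit from seeing the case $\gamma_i=\gamma_j\circ(j,i)$ worked out once, but the claim $[f(\rho)]_{ij}=e$ on $T$ is correct in both cases because $\rho_{ji}=\rho_{ij}^{-1}$.
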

\begin{proof}
  We construct an inverse of $\Hol$ from $\Hom \left( \pi_1 \left( \Gamma \right), G \right)/G$ back to $C^{1} \left( \Gamma;G \right)/ C^{0}\left(\Gamma;G\right)$. Fix an arbitrary vertex $v_0\in \Gamma$, and let $\chi:\pi_1 \left( \Gamma,v_0 \right)\rightarrow G$ be a group homomorphism. By the connectivity of $\Gamma$, each vertex $v_i\in V$ of $\Gamma$ is connected to $v_0$ through an oriented path $\gamma_{0i}$; we orient these paths so they all start at vertex $v_0$, and enforce $\gamma_{00}=\left\{ v_0 \right\}$. Assign to each edge $\left( i,j \right)\in E$ an element $\rho_{ij}$ of the group $G$ defined by
  \begin{equation}
  \label{eq:inverse_hol_homomorphism}
    \rho_{ij} := \chi \left( \gamma_{0i}\circ \left( i,j \right)\circ \gamma_{0j}^{-1} \right).
  \end{equation}
Clearly, $\rho_{ij}=\rho_{ji}^{-1}$ follows from the fact that $\chi$ is a group homomorphism; so does $\rho_{ii}=e$ for all vertices $v_i\in V$. Of course, an edge potential $\rho$ defined as in \eqref{eq:inverse_hol_homomorphism} depends on the choice of the oriented paths $\left\{ \gamma_{0i} \right\}$; this dependence is removed after passing to the orbit space $\left[ \rho \right]\in C^{1} \left( \Gamma;G \right)/ C^{0}\left(\Gamma;G\right)$. In fact, let $\left\{ \tilde{\gamma}_{0i} \right\}$ be an arbitrary choice of $\left| V \right|$ oriented paths connecting $v_0$ to each vertex of $\Gamma$ satisfying $\tilde{\gamma}_{00}=\left\{ v_0 \right\}$, then
\begin{equation*}
    \tilde{\rho}_{ij}=\chi \left( \tilde{\gamma}_{0i}\circ \left( i,j \right)\circ \tilde{\gamma}_{0j}^{-1} \right) =\chi \left( \tilde{\gamma}_{0i}\circ\gamma_{0i}^{-1} \right)\chi\left(\gamma_{0i}\circ \left( i,j \right)\circ \gamma_{0j}^{-1}\right)\chi \left( \gamma_{0j}\circ \tilde{\gamma}_{0j}^{-1}\right)=\chi \left( \gamma_{0i}\circ\tilde{\gamma}_{0i}^{-1} \right)\rho_{ij}\,\chi \left( \gamma_{0j}\circ \tilde{\gamma}_{0j}^{-1} \right),
\end{equation*}
i.e., as elements in $C^{1} \left( \Gamma;G \right)$, $\tilde{\rho}$ differs from $\rho$ by an action of the vertex potential $f\in C^{0}\left(\Gamma;G\right)$ defined as
\begin{equation*}
  f_i:=\chi\left( \gamma_{0i}\circ\tilde{\gamma}_{0i}^{-1} \right),\quad\forall v_i\in V.
\end{equation*}
Therefore, \eqref{eq:inverse_hol_homomorphism} uniquely specifies an element $\left[ \rho \right]$ in $C^{1} \left( \Gamma;G \right)/ C^{0}\left(\Gamma;G\right)$ for any $\chi\in\Hom \left( \pi_1 \left( \Gamma \right), G \right)$.

It remains to show that $\Hol \left(\left[ \rho \right]\right)$ differs from $\chi$ by an inner automorphism of $G$. To see this, let $\omega$ be an arbitrary oriented loop on $\Gamma$ based at $v_0$ consisting of consecutive edges $\left( v_0,v_{i_1} \right), \left( v_{i_1}, v_{i_2} \right), \cdots, \left( v_{i_{N}}, v_0 \right)$, where $N$ is some nonnegative integer. Using $\gamma_{00}=\left\{ 0 \right\}$ and $\gamma_{0i}^{-1}\circ \gamma_{0i}= \left\{ v_0 \right\}$ for any $v_i\in V$, we have
\begin{equation*}
  \begin{aligned}
    &\hol_{\rho}\left( \omega \right) = \left(\rho_{0,i_N}\rho_{i_N,i_{N-1}}\cdots\rho_{i_2,i_1}\rho_{i_1,0}\right)^{-1}=\rho_{0,i_1}\rho_{i_1,i_2}\cdots\rho_{i_{N-1},i_N}\rho_{i_N,0} \\
    & = \chi \left( \gamma_{00}\circ \left( v_0,v_{i_1} \right)\circ \gamma_{0,i_1}^{-1} \right)\chi \left( \gamma_{0,i_1}\circ \left( v_{i_1},v_{i_2} \right)\circ \gamma_{0,i_2}^{-1} \right)\cdots \chi \left( \gamma_{0,i_{N-1}}\circ \left( v_{i_{N-1}},v_{i_N} \right)\circ \gamma_{0,i_N}^{-1} \right)\chi \left( \gamma_{0,i_N}\circ \left( v_{i_N},v_{0} \right)\circ \gamma_{00}^{-1} \right) \\
    & = \chi \left( \left\{ v_0 \right\} \right)\chi \left( \left( v_0,v_{i_1} \right)\circ\left( v_{i_1}, v_{i_2} \right)\circ \cdots\circ\left( v_{i_{N}}, v_0 \right) \right)\chi \left( \left\{ v_0 \right\} \right)^{-1}=\chi \left( \omega \right).
  \end{aligned}
\end{equation*}
This calculation is clearly independent of the choices of oriented paths $\left\{ \tilde{\gamma}_{0i} \right\}$. Thus $\Hol$ maps $\left[ \rho \right]$ exactly to $\chi$, an element of $\Hom \left( \pi_1 \left( \Gamma,v_0 \right),G \right)$; the independence of $\Hol \left( \left[ \rho \right] \right)$ as an element of $\Hom \left( \pi_1 \left( \Gamma \right), G \right)/G$ with respect to the choice of the base point $v_0$ follows from an essentially identical argument as given in the proof of Lemma~\ref{lem:lem-holonomy-iso}. The last statement follows from Lemma~\ref{lem:equiv-class-characterization}.
\end{proof}

Theorem~\ref{lem:holonomy_homomorphism_bijectivity} is closely related to classification theorems of flat connections and principal bundles with disconnected structure group (see, e.g. \cite[\S13.6]{Taubes2011DG}) and \cite[\S13.9]{Steenrod1951}). The synchronizability of an edge potential $\rho$ on connected graph $\Gamma$, which is equivalent to the triviality of $\mathscr{B}_{\rho}$ (c.f. Proposition~\ref{prop:synchronizability_flat_bundle_one_skelenton}), can now be interpreted as the corresponding conjugacy class of $\Hom \left( \pi_1 \left( \Gamma \right),G \right)$. In fact, the conjugacy class corresponding to trivial bundles $\mathscr{B}_{\rho}$ is also trivial and reflects the triviality of the holonomy of $\Gamma$. The proof of Corollary~\ref{cor:cor-trivial-hol} further develops this observation.

\begin{corollary}
\label{cor:cor-trivial-hol}
  For a connected graph $\Gamma$ and topological group $G$, an edge potential $\rho\in C^{1} \left( \Gamma;G \right)$ is synchronizable if and only if $\Hol_{\rho} \left( \Gamma \right)$ is trivial.
\end{corollary}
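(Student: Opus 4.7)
The plan is to chain Proposition~\ref{prop:synchronizability_flat_bundle_one_skelenton}, Lemma~\ref{lem:equiv-class-characterization}, Theorem~\ref{lem:holonomy_homomorphism_bijectivity}, and Lemma~\ref{lem:triviality_global_holonomy} into a single equivalence. By Proposition~\ref{prop:synchronizability_flat_bundle_one_skelenton}, synchronizability of $\rho$ is the same as triviality of $\mathscr{B}_\rho$; and if $\rho_0\in C^1(\Gamma;G)$ denotes the constant edge potential with $(\rho_0)_{ij}=e$ for every $(i,j)\in E$, then $\mathscr{B}_{\rho_0}$ is the product bundle $\Gamma\times G$, so by Lemma~\ref{lem:equiv-class-characterization} the synchronizable edge potentials are precisely the orbit $[\rho_0]\in C^1(\Gamma;G)/C^0(\Gamma;G)$ (witnessed by the vertex potential $f\equiv e$). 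The task therefore reduces to proving $[\rho]=[\rho_0]$ if and only if $\Hol_\rho(\Gamma)$ is trivial.

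For that reduction I would invoke the bijection $\Hol:C^1(\Gamma;G)/C^0(\Gamma;G) \to \Hom(\pi_1(\Gamma),G)/G$ supplied by Theorem~\ref{lem:holonomy_homomorphism_bijectivity}. The image of $[\rho_0]$ under $\Hol$ is the conjugacy class of the trivial homomorphism $\mathbf{1}:\pi_1(\Gamma)\to\{e\}\subset G$, since $\hol_{\rho_0}$ on any loop is a product of copies of $e$. Consequently, by bijectivity, $[\rho]=[\rho_0]$ if and only if $\hol_\rho$ is $G$-conjugate to $\mathbf{1}$, and this in turn happens if and only if $\hol_\rho=\mathbf{1}$ at some (equivalently, any) chosen base vertex $v_0\in V$, i.e.\ $\Hol_\rho(v_0)=\{e\}$. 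Lemma~\ref{lem:triviality_global_holonomy} then upgrades this pointwise statement to the global triviality of the conjugacy class $\Hol_\rho(\Gamma)$, which closes the chain of equivalences.

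The only step requiring explicit care, and the one I would flag as the ``main obstacle,'' is the observation that the $G$-conjugacy class of $\mathbf{1}$ is a singleton, since $g^{-1}e\,g = e$ for every $g\in G$; without this remark, ``$\hol_\rho$ is $G$-conjugate to $\mathbf{1}$'' would be a priori weaker than ``$\hol_\rho=\mathbf{1}$,'' and the implication from triviality in $\Hom(\pi_1(\Gamma),G)/G$ to triviality of $\Hol_\rho(\Gamma)$ would not be immediate. Everything else in the argument is either a direct appeal to a previously established result or a routine unwinding of definitions, so no further technical work should be needed. As a sanity check, one could also run a direct constructive proof of the backward implication: fix $v_0\in V$, pick oriented paths $\gamma_{0i}$ from $v_0$ to each $v_i$, and set $f_i:=\hol_\rho(\gamma_{0i})^{-1}$; independence of $f_i$ on the choice of $\gamma_{0i}$ follows from $\Hol_\rho(v_0)=\{e\}$, and the identity $f_i=\rho_{ij}f_j$ is exactly the statement that $\hol_\rho$ vanishes on the loop $\gamma_{0i}\circ(i,j)\circ\gamma_{0j}^{-1}$, confirming the conclusion by the same underlying observation.
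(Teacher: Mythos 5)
Your proof is correct and follows essentially the same route as the paper's: reduce synchronizability to $[\rho]=[\mathfrak{e}]$ in $C^1(\Gamma;G)/C^0(\Gamma;G)$, apply the bijection from Theorem~\ref{lem:holonomy_homomorphism_bijectivity}, and close the loop with Lemma~\ref{lem:triviality_global_holonomy}. Your explicit remark that the $G$-conjugacy class of the trivial homomorphism is a singleton is a point the paper leaves implicit, and the constructive ``sanity check'' you append is exactly the elementary argument the paper sketches in the paragraph following the corollary.
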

\begin{proof}
  Note that $\rho\in C^{1} \left( \Gamma;G \right)$ is synchronizable (see \eqref{eq:vertex-pot-satisfies-edge-pot}) if and only if there exists $f\in C^0 \left( \Gamma;G \right)$ such that
  \begin{equation*}
    f_i^{-1}\rho_{ij}f_j=e\in G\quad\forall \left( i,j \right)\in E,
  \end{equation*}
where $e$ is the identify element of the structure group $G$. This is equivalent to saying that
  \begin{equation}
  \label{eq:syncable-equivalence}
    \left[ \rho \right]=\left[ \mathfrak{e} \right]\in C^{1} \left( \Gamma;G \right)/ C^{0}\left(\Gamma;G\right), \quad\textrm{where $\mathfrak{e}\in C^{1} \left( \Gamma;G \right)$ is defined as $\mathfrak{e}_{ij}=e\in G$ for all $\left( i,j \right)\in E$},
  \end{equation}
which by Theorem~\ref{lem:holonomy_homomorphism_bijectivity} implies
\begin{equation*}
  \Hol \left( \left[ \rho \right] \right)=\Hol \left( \left[ \mathfrak{e} \right] \right)=\mathrm{Id}_e\in \Hom \left( \pi_1 \left( \Gamma \right), G \right)/G,
\end{equation*}
where $\mathrm{Id}_e:\pi_1 \left( \Gamma \right)\rightarrow G$ is the constant map sending all oriented loops in $\Gamma$ to the identity element $e\in G$. The conclusion follows immediately by noting that
\begin{equation*}
  \begin{aligned}
    \Hol \left( \left[ \rho \right] \right)=\mathrm{Id}_e\quad&\Leftrightarrow\quad\textrm{for any $v\in V$,}\,\,\hol_{\rho} \left( \omega \right)=e\textrm{ for all oriented loops $\omega$ based at $v$}\\
    &\Leftrightarrow\quad \Hol_{\rho} \left( v \right)=\left\{ e \right\}\textrm{ for all vertices $v\in V$}\\
    &\Leftrightarrow\quad \Hol_{\rho} \left( \Gamma \right) \textrm{ is trivial}
  \end{aligned}
\end{equation*}
where for the last equivalence we invoked Lemma~\ref{lem:triviality_global_holonomy}.
\end{proof}

Corollary~\ref{cor:cor-trivial-hol} on its own can be derived from an elementary argument. In fact, without descending $\hol_{\rho}$ from $\Omega_v$ to $\pi_1 \left( \Gamma,v \right)$, we can still define $\Hol_{\rho} \left( v \right)$ as the image $\hol_{\rho} \left( \Omega_v \right)$, though $\hol_{\rho}$ is not injective as a group homomorphism from $\Omega_v$ to $G$. The triviality of $\Hol_{\rho}\left( v \right)$ still implies the existence of a vertex potential $f\in C^{0}\left(\Gamma;G\right)$ compatible with $\rho\in C^{1} \left( \Gamma;G \right)$ (simply by setting $f_i=e$ on an arbitrarily chosen $v_i\in V$ and progressively propagating values of $f$ to neighboring vertices), and \emph{vice versa}. The exposition in this section, centered around Theorem~\ref{lem:holonomy_homomorphism_bijectivity}, extends beyond this elementary argument and strives to unveil a complete geometric picture underlying the ``correspondence between trivialities'' discussed in Corollary~\ref{cor:cor-trivial-hol}. In future work we intend to pursue novel synchronization algorithms based on metric and symplectic structures on the moduli space of flat bundles (see, e.g. \cite{AtiyahBott1983,Weinstein1995,Hitchin1990}).

\subsection{A Twisted Hodge Theory for Synchronization Problems}
\label{sec:geom-synchr-probl}

In this section we relate synchronization to the first cohomology of a de Rham cochain complex on $\Gamma$ with coefficients twisted by a representation space $F$ of the structure group $G$. This can be interpreted as an instance of the standard de Rham cohomology of flat bundles (see e.g. \cite{Xia2012,GarciaRayan2015,Simpson1994,Simpson1995}). The fibre bundles considered in this section are vector bundles (with fibre type $F$) associated with the principal bundle studied in Section~\ref{sec:synchr-probl}. When the vector space $F$ is equipped with a metric, the vector bundle inherits a compatible metric, with which a twisted Hodge Laplacian can be constructed; special cases of this twisted Laplacian in the lowest degree include the connection Laplacian \cite{SingerWu2012VDM,BSS2013}. In this setting, synchronizability is realized as a condition on the dimension of the null space of the lowest degree twisted Hodge Laplacian, this is reminiscent of the classical Riemann-Hilbert correspondence between flat connections and locally constant sheaves. The spectral information of the twisted Hodge Laplacian serves as a quantification of 
the the level of obstruction to synchronizability.

\subsubsection{Flat Associated Bundles and Twisted Zero-Forms}
\label{sec:flat-assoc-bundles}

Let $\mathscr{B}_{\rho}$ be the synchronization principal bundle on $\Gamma$ associated with $\rho\in C^{1} \left( \Gamma;G \right)$, as in Proposition~\ref{prop:synchronizability_flat_bundle_one_skelenton}, and $F$ be a topological space on which $G$ acts on the left as a topological transformation group. Denote the action of $G$ on $F$ as $\tau:G\rightarrow \mathrm{Aut}\left( F \right)$. Consider the right action of $G$ on $\mathscr{B}_{\rho}\times F$ as
\begin{equation*}
  \left( p,v \right)\longmapsto \left( pg, \tau\left(g^{-1}\right)v \right).
\end{equation*}
The orbit space of this action, conventionally denoted as $\mathscr{B}_{\rho}\times_GF$ or $\mathscr{B}_{\rho}\left[ F \right]$, is referred to as the \emph{$F$-bundle associated with principal bundle $\mathscr{B}_{\rho}$}, or \emph{associated $F$-bundle} for short. We will denote the bundle projection as $\pi:\mathscr{B}_{\rho}\left[ F \right]\rightarrow \Gamma$, and denote $\mathscr{B}_{\rho}\left[ F \right]_x:=\pi^{-1}\left( x \right)$ for the \emph{fibre over $x\in \Gamma$}. Strictly speaking, the graph $\Gamma$ should be distinguished from its underlying topological space, but we use the same notation $\Gamma$ for both as long as the meaning is clear from the context.

The same open cover $\mathfrak{U}$ of $\Gamma$ that trivializes $\mathscr{B}_{\rho}$ also trivializes $\mathscr{B}_{\rho} \left[ F \right]$. In fact, the bundle transition function of $\mathscr{B}_{\rho} \left[ F \right]$ on any nonempty $U_i\cap U_j$ is the constant map $U_i\cap U_j\rightarrow \tau \left( \rho_{ij} \right)\in\mathrm{Aut}\left( F \right)$, where $U_i\cap U_j \rightarrow \rho_{ij}$ is the constant bundle transition function from $U_i$ to $U_j$ for $\mathscr{B}_{\rho}$. Consequently, the associated bundle $\mathscr{B}_{\rho} \left[ F \right]$ is also flat. Unless confusions arise, we shall refer to $\mathscr{B}_{\rho} \left[ F \right]$ as the \emph{flat associated $F$-bundle of $\mathscr{B}_{\rho}$}, and denote the local trivialization of the associated bundle over $U_i\in\mathfrak{U}$ using the same notation $\phi_i:U_i\times F\rightarrow \mathscr{B}_{\rho}\left[ F \right]$ as for the principal bundle $\mathscr{B}_{\rho}$.

In the context of synchronization problems, the most relevant associated bundles are those with fibre $F$ being a vector space and structure group $G$ being the general linear group $\GL \left( F \right)$. These types of fibre bundles are commonly referred to as \emph{vector bundles}. We will focus on flat associated vector bundles for the rest of the section, though the definition of \emph{fibre projections} and \emph{sections} extend literally to general fibre bundles. For simplicity of presentation, we will omit the notation $\tau$ and write the bundle transition functions again as $\rho_{ij}$ (instead of $\tau \left( \rho_{ij} \right)$), since its action on a vector space $F$ is simply matrix-vector multiplication.

We now focus on \emph{sections}, the analog of ``functions'' on smooth manifolds but with values in fibre bundles. For a general fibre bundle $\mathfrak{E}\rightarrow \mathfrak{B}$, a \emph{local section} $s:U\rightarrow \mathfrak{E}|_U$ of $\mathfrak{E}$ on an open set $U$ of the base space $\mathfrak{B}$ is a continuous map from $U$ to $\mathfrak{E}|_U$ such that $\pi\circ s$ is identified on $U$. A \emph{global section} of $\mathfrak{E}$ is a local section defined on the entire base space $\mathfrak{B}$. We shall encode the data of synchronization problems into the language of sections of flat associated bundles. The discrete nature of the problem naturally motivates us to consider special classes of local and global sections that are ``constant'' within each open set in $\mathfrak{U}$, in a sense to be made clear soon in local coordinates. The following notion of \emph{fibre projection} is introduced to simplify notations involving local coordinates.
\begin{definition}
For any $i\in V$, define the \emph{fibre projection} over $U_i\in\mathfrak{U}$, denoted as 
\begin{equation}
  \label{eq:trivialization_vertical_projection}
  p_i:\mathscr{B}_{\rho}\left[ F \right]\big|_{U_i}=\pi^{-1} \left( U_i \right)\longrightarrow F,
\end{equation}
as the composition of $\phi_i^{-1}:\mathscr{B}_{\rho}\left[ F \right]\big|_{U_i}\rightarrow U_i\times F$ with the canonical projection $U_i\times F\rightarrow F$. For any $x\in U_i$, the restriction of $p_i$ to the fibre $\mathscr{B}_{\rho}\left[ F \right]_x$, denoted as $p_{i,x}:\mathscr{B}_{\rho}\left[ F \right]_x\rightarrow F$, is (by definition) simultaneously a homeomorphism between topological spaces and an isomorphism between vector spaces.
\end{definition}

\begin{definition}[Constant Local Sections]
  A \emph{constant local section} $s:U_i\rightarrow \mathscr{B}_{\rho}\left[ F \right]\big|_{U_i}$ of the bundle $\mathscr{B}_{\rho}\left[ F \right]$ on open set $U_i\in\mathfrak{U}$ is a local section of $\mathscr{B}_{\rho}\left[ F \right]$ such that $p_{i,x} \left( s \left( x \right) \right)$ is a constant element of $F$ for all $x\in U_i$. We refer to the linear space of all constant local sections on $U_i\in\mathfrak{U}$ as \emph{constant twisted local $0$-forms on $U_i$}, denoted as $\Omega_i^0 \left( \Gamma; \mathscr{B}_{\rho}\left[ F \right] \right)$.
\end{definition}
Clearly, a constant local section $s\in\Omega_i^0 \left( \Gamma; \mathscr{B}_{\rho}\left[ F \right] \right)$ is unambiguously determined by evaluating $s$ at vertex $i$, or equivalently by reading off the fibre projection image $s_i:=p_i \left( s \left( i \right) \right)$. We denote this characterization of $s$ as



\begin{equation}
\label{eq:characterize-constant-local-section}
  p_i \left( s \left( x \right) \right)\equiv s_i,\quad\forall x\in U_i,\,\,s\in\Omega_i^0 \left( \Gamma; \mathscr{B}_{\rho}\left[ F \right] \right).
\end{equation}
When we consider $x\in U_i\cap U_j$ where $U_i\cap U_j\neq \emptyset$ (i.e. when $\left( i,j \right)\in E$), it will be convenient to note that the fibre projection $p_j$ evaluates $s \left( x \right)$ to $p_j \left( s \left( x \right) \right)=p_j\circ p_i^{-1} \left( p_i \left( s \left( x \right) \right) \right)\equiv \rho_{ji}s_i$. This can be understood as a ``change-of-coordinates'' formula for constant local sections.

Let $C^0 \left( \Gamma; F \right):=\left\{ f:V\rightarrow F \right\}$ denote the linear space of \emph{$F$-valued $0$-cochains} on graph $\Gamma$. Every element $f$ of $C^0 \left( \Gamma; F \right)$ defines a collection of constant local sections $\left\{ f^{\left( i \right)}:U_i\rightarrow \pi^{-1}\left( U_i \right) \,\mid\,U_i\in\mathfrak{U} \right\}$, one for each $U_i\in \mathfrak{U}$ with
\begin{equation}
\label{eq:local_sections_assoc_omega_zero}
  f^{\left( i \right)} \left( x \right):=p_{i,x}^{-1}\left( f_i \right),\quad\forall x\in U_i.
\end{equation}
We thus have the canonical identification
\begin{equation}
\label{eq:zero-cochain-identification}
  C^0 \left( \Gamma; F \right) = \prod_{i\in V}\Omega^0_i \left( \Gamma; \mathscr{B}_{\rho}\left[ F \right] \right).
\end{equation}
Of course, the constant local sections $\left\{f^{\left( i \right)}\right\}$ specified by $f\in C^0 \left( \Gamma; F \right)$ generally do not give rise to a global section of the bundle $\mathscr{B}_{\rho}\left[ F \right]$, unless they ``patch together'' seamlessly on every nonempty intersection $U_i\cap U_j$, satisfying the condition 
\begin{equation}
\label{eq:constant-local-sections-patch-together}
  p_{i,x}^{-1} \left( f_i \right)=f^{\left( i \right)}\left( x \right)=f^{\left( j \right)}\left( x \right)=p_{j,x}^{-1}\left( f_j \right),\,\,\forall x\in U_i\cap U_j \quad \Leftrightarrow\quad f_i = p_{i,x}\circ p_{j,x}^{-1}\left( f_j \right)=\rho_{ij}f_j,\,\,\forall x\in U_i\cap U_j.
\end{equation}
The right hand side of \eqref{eq:constant-local-sections-patch-together} is recognized as a solution to the synchronization problem with prescribed edge potential $\rho$. We have thus proved the following Lemma.
\begin{lemma}
\label{lem:local-compatibility-form-global}
  The constant local sections specified by $f\in C^0 \left( \Gamma; F \right)$ define a global section on $\mathscr{B}_{\rho} \left[ F \right]$ if and only if
\begin{equation}
\label{eq:global_section}
  f_i=\rho_{ij}f_j,\quad\forall \left( i,j \right)\in E,
\end{equation}
i.e., if and only if the vertex potential $f:V\rightarrow F$ is a solution to the $F$-synchronization problem over $\Gamma$ with respect to the edge potential $\rho\in C^1 \left(\Gamma; G\right)$.
\end{lemma}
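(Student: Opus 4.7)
The plan is to unpack what it means for the collection $\{f^{(i)}\}$ defined by \eqref{eq:local_sections_assoc_omega_zero} to assemble into a global section, and to identify the compatibility condition on pairwise overlaps with the synchronization equation \eqref{eq:global_section}. This is essentially a bookkeeping exercise using the local trivializations $\phi_i$ and the fibre projections $p_i$, since the hard geometric content (that the bundle transition functions act by $\rho_{ij}$ on $U_i \cap U_j$) has already been established when defining $\mathscr{B}_\rho[F]$.

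First I would recall that a global section $s$ of $\mathscr{B}_\rho[F]\to\Gamma$ is a continuous map $s:\Gamma\to\mathscr{B}_\rho[F]$ with $\pi\circ s=\mathrm{id}_\Gamma$, and that such an $s$ is equivalent to specifying a family of local sections $\{s^{(i)}:U_i\to\pi^{-1}(U_i)\}$ indexed by $U_i\in\mathfrak{U}$ that agree on every nonempty pairwise intersection, i.e.\ $s^{(i)}(x)=s^{(j)}(x)$ for all $x\in U_i\cap U_j$. The triple intersections are automatically empty for the open cover $\mathfrak{U}$ constructed in Proposition~\ref{prop:synchronizability_flat_bundle_one_skelenton}, and $U_i\cap U_j\neq\emptyset$ precisely when $(i,j)\in E$, so the gluing conditions are indexed exactly by the edges of $\Gamma$.

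Next I would translate the gluing condition into local coordinates via the fibre projections. For the particular constant local sections $f^{(i)}(x)=p_{i,x}^{-1}(f_i)$, the condition $f^{(i)}(x)=f^{(j)}(x)$ at a point $x\in U_i\cap U_j$ becomes
\begin{equation*}
  p_{i,x}^{-1}(f_i)=p_{j,x}^{-1}(f_j).
\end{equation*}
Applying $p_{i,x}$ to both sides and invoking the change-of-coordinates formula $p_{i,x}\circ p_{j,x}^{-1}=\rho_{ij}$ (which is exactly the statement that the associated bundle $\mathscr{B}_\rho[F]$ has bundle transition functions $\rho_{ij}$ on $U_i\cap U_j$, as noted just after Definition~\ref{defn:sync-bundle}) transforms this to $f_i=\rho_{ij}f_j$. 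Since both $f_i,f_j$ and the transition function are constant in $x$, verifying the equality at a single $x\in U_i\cap U_j$ suffices, and we obtain precisely \eqref{eq:global_section}.

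Finally, the converse is immediate: if $f$ satisfies $f_i=\rho_{ij}f_j$ for every edge $(i,j)\in E$, reversing the calculation above shows $f^{(i)}(x)=f^{(j)}(x)$ for all $x\in U_i\cap U_j$, so the local sections glue into a well-defined continuous map $\Gamma\to\mathscr{B}_\rho[F]$, which is a global section by construction. The only point requiring any care is the identification of $p_{i,x}\circ p_{j,x}^{-1}$ with the action of $\rho_{ij}$; once this is in place the lemma is essentially a tautology, so I expect no substantial obstacle.
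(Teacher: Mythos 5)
Your proof is correct and follows essentially the same route as the paper's: rewrite the gluing condition $f^{(i)}(x)=f^{(j)}(x)$ on each nonempty overlap $U_i\cap U_j$ via the fibre projections, and use $p_{i,x}\circ p_{j,x}^{-1}=\rho_{ij}$ to identify it with $f_i=\rho_{ij}f_j$. The paper carries out exactly this calculation in the displayed equivalence just before the lemma statement, so there is no substantive difference in approach.
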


When condition \eqref{eq:global_section} is satisfied, the resulting global section constructed from constant local sections is special among all global sections of $\mathscr{B}_{\rho}\left[ F \right]$ in that its restriction to each $U_i$ has constant image under fibre projection over $U_i$. This type of global section will be of major interest in the remainder of this section.

\begin{definition}[Locally Constant Global Section]
\label{defn:locally_constant_global_section}
  A global section $s:\Gamma\rightarrow\mathscr{B}_{\rho}\left[ F \right]$ is said to be \emph{locally constant} if
  \begin{equation}
    \label{eq:defn-locally-constant-global-section}
    p_{i} \left( s \left( x \right) \right) \equiv \mathrm{const.}\quad\forall x\in U_i.
  \end{equation}
The linear space of all locally constant global sections on $\mathscr{B}_{\rho}\left[ F \right]$ will be called \emph{locally constant twisted global $0$-forms} on $\Gamma$, denoted as $\Omega^0 \left( \Gamma; \mathscr{B}_{\rho}\left[ F \right] \right)$.
\end{definition}

Naturally, $\Omega^0 \left( \Gamma; \mathscr{B}_{\rho}\left[ F \right] \right)$ embeds into $C^0 \left( \Gamma; F \right)$ by
\begin{equation}
  \label{eq:lcwg-zero-forms-embed-vert-potentials}
  \begin{aligned}
    \Omega^0 \left( \Gamma; \mathscr{B}_{\rho}\left[ F \right] \right) &\hookrightarrow \prod_{i\in V}\Omega^0_i \left( \Gamma; \mathscr{B}_{\rho}\left[ F \right] \right) = C^0 \left( \Gamma; F \right)\\
     s &\longmapsto \left( s|_{U_1}, \cdots, s|_{U_n} \right)
  \end{aligned}
\end{equation}
where $n= \left| V \right|$ stands for the total number of vertices in $\Gamma$. The objective of a $F$-synchronization problem over $\Gamma$ with respect to $\rho\in C^1 \left( \Gamma; G \right)$ can be interpreted in this geometric framework as searching for an element of $\Omega^0 \left( \Gamma; \mathscr{B}_{\rho}\left[ F \right] \right)$ in the feasible domain $C^0 \left( \Gamma; F \right)$.

The existence of global sections is crucial information for the structure of a fibre bundle. For principal bundles $\mathscr{B}_{\rho}$ considered in Section~\ref{sec:synchr-probl}, a single global section dictates the triviality of the bundle. Though the triviality of a principal bundle is equivalent to its associated vector bundle (see Proposition~\ref{prop:triviality_of_principal_and_associated_bundles}), $\mathscr{B}_{\rho}\left[ F \right]$ is trivial if and only if it admits $d=\dim F$ global sections $s^{1},\cdots,s^{d}$ that are \emph{linearly independent} in the sense that $s_x^{1},\cdots,s_x^{d}$ on each fibre $F_{x}$ are linearly independent as vectors in $F$ (c.f. \cite[Theorem 2.2]{MilnorStasheff1974}). A collection of linearly independent global sections are said to form a \emph{global frame} (see e.g. \cite[Chapter 5]{Lee2003GTM218}) for the vector bundle, since they define a basis (frame) for each fibre. As will be established in Proposition~\ref{prop:triviality_of_principal_and_associated_bundles}, the fact that the bundle $\mathscr{B}_{\rho}\left[ F \right]$ is flat further reduces its triviality to finding $d$ linearly independent locally constant global sections, for which linear independence only needs to be checked at the vertices of $\Gamma$. More precisely, adopting notation $s_i:=p_i\left(s \left( i \right)\right),\forall i\in V$ for an arbitrary section $s$ of $\mathscr{B}_{\rho}\left[ F \right]$, we define the linear independence of locally constant global sections of $\mathscr{B}_{\rho}\left[ F \right]$ as follows.

\begin{definition}
\label{defn:linear_indpendence_as_global_sections}
  A collection of $k$ ($1\leq k\leq d=\dim F$) locally constant global sections $s^{1},\cdots,s^{k}\in\Omega^0 \left( \Gamma; \mathscr{B}_{\rho}\left[ F \right] \right)$ are said to be \emph{linearly independent} if $s^{1}_i,\cdots,s^{k}_i$ are linearly independent as vectors in $F$ at every vertex $i\in V$.
\end{definition}

By the embedding \eqref{eq:lcwg-zero-forms-embed-vert-potentials}, any $s\in\Omega^0 \left( \Gamma; \mathscr{B}_{\rho}\left[ F \right] \right)$ can be equivalently encoded into a vector of dimension $nd$, where $d=\dim F$ and $n=\left| V \right|$ stands for the number of vertices of $\Gamma$. In fact, just as for any vertex potential in $C^0 \left( \Gamma; F \right)$, one simply needs to vertically stack the column vectors $\left\{ s_i=p_i s \left( i \right) \mid i\in V \right\}$, the fibre projection images of $s$ at each vertex. We shall write $\left[ s \right]$ for such a vector of length $nd$ that encodes $s\in\Omega^0 \left( \Gamma; \mathscr{B}_{\rho}\left[ F \right] \right)$, and refer to the vector as the \emph{representative vector} of the section. The linear independence of locally constant global sections is easily seen to be equivalent to the linear independence of the representative vectors of length $nd$, as the following Lemma clarifies.

\begin{lemma}
\label{lem:equivalence-linear-independence}
  A collection of $k$ ($1\leq k\leq d=\dim F$) locally constant global sections $s^{1},\cdots,s^{k}$ are linearly independent if and only if $\left[ s^1 \right], \cdots, \left[ s^k \right]$ are linear independent as vectors of length $nd$.
\end{lemma}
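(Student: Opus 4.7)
The plan is to prove both implications directly from the definitions, relying on one nontrivial ingredient: the synchronization constraint \eqref{eq:global_section} satisfied by locally constant global sections (Lemma~\ref{lem:local-compatibility-form-global}), together with the connectedness of $\Gamma$.

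The forward direction is essentially bookkeeping. Suppose $s^{1}_i,\ldots,s^{k}_i$ are linearly independent in $F$ at every vertex $i\in V$, and assume scalars $c_1,\ldots,c_k\in\mathbb{K}$ satisfy $\sum_{\ell=1}^{k}c_\ell\,[s^\ell]=0$ in $\mathbb{K}^{nd}$. Reading off any single block of this equation (say, the block corresponding to some fixed vertex $i_0$) gives $\sum_{\ell=1}^k c_\ell s^{\ell}_{i_0}=0$ in $F$, whence $c_1=\cdots=c_k=0$ by the vertex-wise linear independence. Thus the $[s^\ell]$ are linearly independent vectors in $\mathbb{K}^{nd}$.

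For the backward direction --- which is the substantive part --- I would argue by contrapositive. Suppose the $s^{\ell}_i$ fail to be linearly independent at some vertex, i.e., there exist a vertex $i_0\in V$ and scalars $c_1,\ldots,c_k\in\mathbb{K}$ not all zero such that $\sum_{\ell=1}^k c_\ell s^{\ell}_{i_0}=0$. The key observation is that because each $s^\ell\in\Omega^0(\Gamma;\mathscr{B}_\rho[F])$ is a locally constant global section, Lemma~\ref{lem:local-compatibility-form-global} gives $s^{\ell}_{i}=\rho_{ij}s^{\ell}_{j}$ for every $(i,j)\in E$ and every $\ell$. Since $\rho_{ij}$ acts on $F$ by a linear automorphism (recall $G$ acts by $\tau:G\to\mathrm{Aut}(F)$, which is linear since $F$ is a vector space and we are working with the associated vector bundle), the linear dependence propagates across each edge: for any neighbour $j$ of $i_0$,
\begin{equation*}
\sum_{\ell=1}^k c_\ell s^{\ell}_{j}=\sum_{\ell=1}^k c_\ell\,\rho_{j i_0} s^{\ell}_{i_0}=\rho_{j i_0}\Bigl(\sum_{\ell=1}^k c_\ell s^{\ell}_{i_0}\Bigr)=0.
\end{equation*}

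Because $\Gamma$ is assumed connected, iterating this propagation along a spanning tree rooted at $i_0$ shows $\sum_{\ell=1}^k c_\ell s^{\ell}_{i}=0$ in $F$ for every vertex $i\in V$. Stacking these identities vertex by vertex yields $\sum_{\ell=1}^k c_\ell\,[s^\ell]=0$ in $\mathbb{K}^{nd}$ with not all $c_\ell$ zero, contradicting the linear independence of $[s^1],\ldots,[s^k]$. The main (and really only) obstacle is ensuring that the propagation step uses the right structure --- namely, linearity of the $G$-action on $F$ combined with connectedness of $\Gamma$ --- which together rule out the failure mode in which linear dependence at one vertex fails to spread to the whole graph.
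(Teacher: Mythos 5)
Your proof is correct and rests on the same central observation as the paper's: since each $\rho_{ij}$ is an invertible linear map on $F$ and $\Gamma$ is connected, a linear relation among the $s^\ell$ at one vertex propagates to every vertex, so linear independence at one vertex is equivalent to linear independence at all of them. The paper packages this as a one-shot rank computation, factoring the $nd\times k$ matrix $S$ whose columns are the $[s^j]$ as $S = P\left(e\otimes S_1\right)$ with $P$ an invertible block-diagonal transfer matrix, then reading off $\rank(S)=\rank(S_1)$ from the Kronecker-product rank identity. You instead prove the easy direction by projecting onto a single $d$-block and prove the converse by contrapositive, pushing a nontrivial dependence along a spanning tree. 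Both arguments are equivalent in substance; your element-wise version is a bit more explicit and only ever applies $\rho_{ij}$ on genuine edges of $\Gamma$, which sidesteps the paper's informal use of the symbol $\rho_{j1}$ for vertices $j$ not adjacent to vertex $1$.
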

\begin{proof}
Since $\mathscr{B}_{\rho}\left[ F \right]$ is a flat bundle and the graph $\Gamma$ is assumed connected, the linear independence of locally constant global sections $s^{1},\cdots,s^{k}$ is equivalent to the linear independence of vectors $s^1_i,\cdots,s^k_i$ at any vertex $i$: since $\rho_{ji}\in G$ are all invertible, vectors $s^1_i,\cdots,s^k_i$ are linearly independent if and only if $s^1_{j}=\rho_{ji}s^1_i,\cdots,s^k_j=\rho_{ji}s^k_i$ are linearly independent. For definiteness, let us fix $i=1$. Write $S$ for the $nd$-by-$k$ matrix with $\left[ s^j \right]$ as its $j$th column, $S_1$ for the $d\times k$ matrix with $s^j_1$ as its $j$th column, $e=\left[ 1,\cdots,1 \right]^{\top}$ for the column vector of length $d$ with all entries equal to one, and $P$ for the $nd$-by-$nd$ block diagonal matrix with $\rho_{j1}$ at its $j$th diagonal block (adopting the convention $\rho_{11}=I_{n\times n}$). The conclusion follows from the matrix identity
\begin{equation*}
  S = P \left( e\otimes S_1 \right)
\end{equation*}
and
\begin{equation*}
  \rank \left( S \right)=\rank \left( \left[ 1,\cdots,1 \right]^{\top} \right)\cdot\rank \left( S_1 \right)=\rank \left( S_1 \right).
\end{equation*}
\end{proof}

\begin{remark}
\label{rem:linear-indpendence-subtlety}
Note that the equivalence of two notions of linear independence only holds if we already know that $s^1,\cdots,s^k$ are global sections. For general $f^1,\cdots,f^k\in C^0 \left( \Gamma; F \right)$ that are linearly independent as $nd$-vectors, their corresponding representatives in $\prod_{i\in V}\Omega^0_i \left( \Gamma; \mathscr{B}_{\rho}\left[ F \right] \right)$ do not necessarily define global sections, nor are they in general linearly independent as constant local sections on each $U_i$. A simple example is to consider a graph $\Gamma$ consisting of two vertices $V = \left\{ v_1, v_2 \right\}$ and only one edge connecting them, $F=\mathbb{R}^2$, and $G$ is the trivial group consisting of only the $2\times 2$ identity matrix: vectors $\left[ f^1 \right]=\left( 1,0,1,0 \right)^{\top}$ and $\left[ f^2 \right]=\left( 1,0,0,1 \right)^{\top}$ are linearly independent as vectors in $\mathbb{R}^4$ but do not define linearly independent constant local sections on $U_1$.
\end{remark}

With all essential concepts presented, we are ready to establish our main observation in this subsection.
\begin{proposition}
\label{prop:triviality_of_principal_and_associated_bundles}
Let $G$ be a topological transformation group acting on a (real or complex) $d$-dimensional vector space $F$ on the left, $\Gamma= \left( V,E \right)$ be a connected undirected graph, and $\rho\in C^{1} \left( \Gamma;G \right)$ a $G$-valued edge potential. The following statements are equivalent:
\begin{enumerate}[(i)]
\item\label{item:I} $\mathscr{B}_{\rho}$ is trivial;
\item\label{item:II} $\mathscr{B}_{\rho}$ admits a global section;
\item\label{item:III} $\mathscr{B}_{\rho}\left[ F \right]$ is trivial;
\item\label{item:IV} $\mathscr{B}_{\rho}\left[ F \right]$ admits $d=\dim F$ linearly independent locally constant global sections.
\end{enumerate}
\end{proposition}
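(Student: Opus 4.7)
The plan is to establish the cycle of implications $(ii)\Rightarrow(i)\Rightarrow(iii)\Rightarrow(iv)\Rightarrow(ii)$, together with the immediate converse $(i)\Rightarrow(ii)$. The equivalence $(i)\Leftrightarrow(ii)$ is standard principal-bundle folklore: given a global section $s:\Gamma\to\mathscr{B}_{\rho}$, the map $(x,g)\mapsto s(x)\cdot g$ is a $G$-equivariant bundle isomorphism $\Gamma\times G\to\mathscr{B}_{\rho}$ thanks to the free and transitive right $G$-action on fibres; conversely $\Gamma\times G$ carries the tautological section $x\mapsto(x,e)$. The implication $(i)\Rightarrow(iii)$ is functorial: if $\mathscr{B}_{\rho}\cong\Gamma\times G$ as a flat $G$-bundle, then $\mathscr{B}_{\rho}[F]=\mathscr{B}_{\rho}\times_{G}F\cong(\Gamma\times G)\times_{G}F\cong\Gamma\times F$ via $[(x,g),v]\mapsto(x,\tau(g)v)$, and this trivialization respects the flat structure.

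For $(iii)\Rightarrow(iv)$ I would invoke the evident analogue of Lemma~\ref{lem:equiv-class-characterization} for the associated bundle (the same gauge-transformation argument carried out in $\GL(F)$): triviality of the flat bundle $\mathscr{B}_{\rho}[F]$ is equivalent to the existence of locally constant $g_{i}\in\GL(F)$ on each $U_{i}\in\mathfrak{U}$ with $\tau(\rho_{ij})=g_{i}\,g_{j}^{-1}$ on every edge. Picking any basis $v^{1},\dots,v^{d}$ of $F$ and setting $f^{k}_{i}:=g_{i}v^{k}$ produces $d$ vertex potentials satisfying $\rho_{ij}f^{k}_{j}=g_{i}v^{k}=f^{k}_{i}$. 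Lemma~\ref{lem:local-compatibility-form-global} then identifies them with locally constant global sections of $\mathscr{B}_{\rho}[F]$, and their pointwise linear independence is immediate from the invertibility of each $g_{i}$.

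The substantive step is $(iv)\Rightarrow(ii)$. Starting from $d$ linearly independent locally constant global sections, Lemma~\ref{lem:local-compatibility-form-global} again furnishes vertex potentials $f^{1},\dots,f^{d}\in C^{0}(\Gamma;F)$ with $f^{k}_{i}=\rho_{ij}f^{k}_{j}$ on every edge and with $(f^{1}_{i},\dots,f^{d}_{i})$ a basis of $F$ at each $i\in V$. Fix an arbitrary $v_{0}\in V$ and let $\omega=(v_{0},v_{1},\dots,v_{n}=v_{0})$ be an oriented loop based at $v_{0}$. Iterating the edge relations along $\omega$ gives, for every $k$,
\[
f^{k}_{v_{0}} \;=\; \rho_{v_{0}v_{n-1}}\rho_{v_{n-1}v_{n-2}}\cdots\rho_{v_{1}v_{0}}\,f^{k}_{v_{0}} \;=\; \hol_{\rho}(\omega^{-1})\cdot f^{k}_{v_{0}}.
\]
Thus the element $\hol_{\rho}(\omega^{-1})\in G$ fixes the entire basis $\{f^{k}_{v_{0}}\}$ of $F$, forcing $\tau(\hol_{\rho}(\omega^{-1}))=I_{F}$. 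Since $G$ acts faithfully on $F$ (as it is a topological transformation group of $F$ in the sense of Steenrod), this implies $\hol_{\rho}(\omega)=e\in G$. As $\omega$ was arbitrary, $\Hol_{\rho}(v_{0})=\{e\}$, hence $\Hol_{\rho}(\Gamma)$ is trivial, and Corollary~\ref{cor:cor-trivial-hol} yields the synchronizability of $\rho$ --- equivalently, a global section of $\mathscr{B}_{\rho}$.

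The main obstacle is precisely this last step: converting pointwise linear independence of the $f^{k}_{i}$ as vectors in $F$ into triviality of the holonomy of $\rho$ in $G$, not merely in $\GL(F)$. The faithfulness of the $G$-action on $F$ is exactly what bridges these two notions; without it, (iv) would only certify triviality of $\mathscr{B}_{\rho}[F]$ as a vector bundle and would not in general propagate back to $\mathscr{B}_{\rho}$ as a flat principal $G$-bundle.
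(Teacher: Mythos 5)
Your proof is correct, and it follows a genuinely different organization from the paper's. The paper proves $(i)\Leftrightarrow(iii)$ by citing Steenrod's Theorem~4.3 and $(iii)\Leftrightarrow(iv)$ by combining the Milnor--Stasheff global-frame criterion with a constructive argument (itself routed through the paper's Proposition~\ref{prop:synchronizability_flat_bundle_one_skelenton} and Corollary~\ref{cor:cor-trivial-hol}); whereas you close the cycle $(ii)\Rightarrow(i)\Rightarrow(iii)\Rightarrow(iv)\Rightarrow(ii)$ with an explicit holonomy argument for $(iv)\Rightarrow(ii)$ that is more self-contained than the paper's delegation to cited theorems. Your treatment also surfaces the key point the paper leaves implicit: the effectiveness (faithfulness) of the $G$-action on $F$, which in Steenrod's framework is baked into the definition of a ``topological transformation group.'' You are right that this is precisely what lets the argument pass from $\tau(\hol_\rho(\omega^{-1}))=I_F$ in $\GL(F)$ back to $\hol_\rho(\omega)=e$ in $G$, and you correctly observe that without it condition~$(iv)$ would only certify $\GL(F)$-triviality and the equivalence would fail (e.g.\ take a cycle graph, $\rho$ a nonidentity element of $\ker\tau$ on one edge and the identity elsewhere). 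One small cosmetic note: in your $(iii)\Rightarrow(iv)$ step the gauge $g_i$ lives in $\GL(F)$ rather than in $G$; this is unproblematic for the cycle because $(iv)\Rightarrow(ii)\Rightarrow(i)$ recovers $G$-triviality afterward, but it is worth being explicit that the $\GL(F)$ and $G$ notions of triviality for the flat bundle coincide only after the faithfulness hypothesis is invoked.
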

\begin{proof}
  The equivalence of \eqref{item:I} and \eqref{item:III} follows from \cite[Theorem 4.3]{Steenrod1951}. The equivalence \eqref{item:I}$\Leftrightarrow$\eqref{item:II} follows from standard differential geometry, see e.g. \cite{Steenrod1951,MilnorStasheff1974}; similarly standard is the equivalence between \eqref{item:III} and the existence of $n$ linearly independent global sections on $\mathscr{B}_{\rho}\left[ F \right]$. To show the equivalence \eqref{item:IV}$\Leftrightarrow$\eqref{item:III}, it suffices to prove that a trivial flat vector bundle $\mathscr{B}_{\rho}\left[ F \right]$ admits $d$ linearly independent global sections that are also locally constant. To see this, recall from Proposition~\ref{prop:synchronizability_flat_bundle_one_skelenton} and Corollary~\ref{cor:cor-trivial-hol} that
  \begin{equation*}
    \begin{aligned}
      \mathscr{B}_{\rho}\left[ F \right]\textrm{ is trivial}\,\,&\Leftrightarrow\,\,\mathscr{B}_{\rho}\textrm{ is trivial}\,\,\Leftrightarrow\,\,\Hol \left( \left[ \rho \right] \right)\textrm{ is trivial}\\
      &\Leftrightarrow\exists \,g\in\Omega_{\rho}^0\left( \Gamma; G \right)\,\textrm{ s.t. } \,g_i^{-1}\rho_{ij}g_j=e\,\,\forall \left( i,j \right)\in E\\
      &\Leftrightarrow\exists \,g\in\Omega_{\rho}^0\left( \Gamma; G \right)\,\textrm{ s.t. } \,\rho_{ij}=g_ig_{j}^{-1}\,\,\forall \left( i,j \right)\in E.
    \end{aligned}
  \end{equation*}
Let $\left\{ e_1,\cdots,e_d \right\}$ be a basis for $F$, and for each $k=1,\cdots,d$ define $s^k:\Gamma\rightarrow \mathscr{B}_{\rho}\left[ F \right]$ as
\begin{equation*}
  s^k \left( x \right)=p_{i,x}^{-1} \left( g_i e_k \right)\quad\forall x\in U_i.
\end{equation*}
It is straightforward to verify by definition that $s^k$ is a well-defined global section and is locally constant. That $s^1,\cdots,s^d$ are linearly independent as global sections follows from the fact that $p_{i,x}: F\rightarrow \mathscr{B}_{\rho}\left[ F \right]_x$ are isomorphisms between vector spaces.
\end{proof}

\begin{remark}
  The global section in \eqref{item:II} is also ``locally constant'' in a sense analogous to Definition~\ref{defn:locally_constant_global_section} but for principal bundles; we do not introduce this definition here since global sections on principal bundles will not be pursued directly in this work. 
\end{remark}

Proposition~\ref{prop:triviality_of_principal_and_associated_bundles} points out an alternative approach for determining the synchronizability of an edge potential $\rho\in C^{1} \left( \Gamma;G \right)$, at least when $G$ is a matrix group $\GL \left( F \right)$: it suffices to check the existence of $d=\dim F$ linearly independent locally constant global sections on the flat associated vector bundle $\mathscr{B}_{\rho} \left[ F \right]$. Such existence can be stated as a cohomological obstruction. We will pursue such a formulation in the next section. In Section~\ref{sec:twisted-laplacians} we will utilize the inner product structure on $F$ to reduce the structure group of a $\GL \left( F \right)$-bundle to $O \left( d \right)$ or $U \left( d \right)$, as commonly seen in synchronization problems \cite{BKS2016,BSS2013,CKS2015}. If the underlying fibre bundle is orientable, the same procedures further reduce the structure group to $\SO \left( d \right)$ or $\SU \left( d \right)$, corresponding to synchronization problems considered in \cite{SZSH2011,WangSinger2013}.

\subsubsection{Twisted One-Forms and De Rham Cohomology}
\label{sec:twisted-de-rham-coho}

In a smooth category, sections on a fibre bundle can be differentiated by a \emph{covariant derivative}. The resulting object is a skew-symmetric ``direction-dependent'' section on the same bundle, or equivalently a section of a new fibre bundle which is the tensor product of the original fibre bundle with the bundle of $1$-forms on the base manifold. We shall generalized this picture to the discrete/combinatorial setting for flat associated bundles $\mathscr{B}_{\rho}\left[ F \right]$ that naturally arise in synchronization problems.

Recall from discrete Hodge theory \cite{Crane:2013:DGP,Lim2015,DHLMJ2005,JLYY2011,JohnsonGoldring2013} that discrete $0$-forms and $1$-forms on a graph $\Gamma$ are defined as
\begin{equation*}
  \Omega^0 \left( \Gamma \right) := \left\{ f: V\rightarrow \mathbb{K} \right\},\quad \Omega^1 \left( \Gamma \right) := \left\{ \omega:E\rightarrow \mathbb{K}\mid\omega_{ij}=-\omega_{ji}\,\,\forall \left( i,j \right)\in E \right\},
\end{equation*}
where $\mathbb{K}=\mathbb{C}$ or $\mathbb{R}$, which we will assume to be the number field for the vector space $F$. Let us define a local version of $\Omega^1 \left( \Gamma \right)$ by
\begin{equation}
  \label{eq:local-one-forms-graph}
  \Omega_i^1 \left( \Gamma \right):=\left\{ \omega:N_i\rightarrow \mathbb{K}\mid\omega_{jk}=-\omega_{kj}\,\,\forall \left( j,k \right)\in N_i \right\},\quad\textrm{where }N_i:= \left\{ \left( j,k \right)\in E\mid j=i\textrm{ or }k=i \right\}.
\end{equation}
In other words, elements of $\Omega^1_i \left( \Gamma \right)$ are restrictions of elements of $\Omega^1 \left( \Gamma \right)$ to $U_i$. By a partition of unity argument, it is straightforward to identify $\Omega^1 \left( \Gamma \right)$ with
\begin{equation}
  \label{eq:graph-one-form-identification}
  \left\{ \left( \omega^{\left( 1 \right)},\cdots,\omega^{\left( n \right)} \right)\in \prod_{i\in V}\Omega_i^1 \left( \Gamma \right)\,\Big|\, \omega^{\left( i \right)}_{ij}=\omega^{\left( j \right)}_{ij}(=-\omega_{ji}^{\left( j \right)}=-\omega_{ji}^{\left( i \right)}) \right\}.
\end{equation}

\begin{definition}[Constant Local $1$-forms]
  A \emph{constant twisted local $1$-form} on open set $U_i\in\mathfrak{U}$ is a local section of  $\Omega^0_i \left( \Gamma; \mathscr{B}_{\rho}\left[ F \right] \right)\otimes \Omega^1_i \left( \Gamma \right)$. Equivalently, a constant twisted local $1$-form on $U_i$ is a map $\omega:U_i\times N_i\rightarrow \mathscr{B}_{\rho}\left[ F \right]$ such that:
\begin{enumerate}[(i)]
\item For any $\left( j,k \right)\in N_i$, $\omega_{jk}:U_i\rightarrow\mathscr{B}_{\rho}\left[ F \right]$ is a constant local section on $U_i$, i.e.
  \begin{equation*}
    p_{i,x}\left(\omega_{ij} \left( x \right)\right)\equiv\textrm{const.};
  \end{equation*}
\item For any $x\in U_i$ and any $U_j\in\mathfrak{U}$ such that $U_i\cap U_j\neq \emptyset$, $\omega_{ij}\left( x \right)=-\omega_{ji}\left( x \right)$.
\end{enumerate}
We denote the linear space of all constant twisted local $1$-forms on $U_i$ as $\Omega_i^1 \left( \Gamma; \mathscr{B}_{\rho}\left[ F \right] \right)$.
\end{definition}

A similar notion of globally defined twisted $1$-forms will also be of interest. In the discrete setting of synchronization problems, it suffices to consider twisted global $1$-forms that are locally constant under fibre projections.

\begin{definition}[Locally Constant Global $1$-forms]
  A \emph{locally constant twisted global $1$-form} is a section of the tensor product bundle $\Omega^0 \left( \Gamma; \mathscr{B}_{\rho}\left[ F \right] \right)\otimes\Omega^1 \left( \Gamma \right)$. In other words, a locally constant twisted global $1$-form is a map
  \begin{equation*}
    \omega: \left\{ \left( i, \left( j,k \right) \right)\mid i\in V, \left( j,k \right)\in N_i \right\}\rightarrow\mathscr{B}_{\rho}\left[ F \right]
  \end{equation*}
such that:
\begin{enumerate}[(i)]
\item For any $U_i\in \mathfrak{U}$, $\omega|_{U_i}$ is a constant twisted local $1$-form on $U_i$;
\item For any $x\in U_i\cap U_j\neq \emptyset$, $p_{i,x}\left(\omega|_{U_i} \left( x \right)\right)=\rho_{ij}p_{j,x}\left(\omega|_{U_j} \left( x \right)\right)$.
\end{enumerate}
We denote the linear space of all locally constant twisted global $1$-forms on $\Gamma$ as $\Omega^1 \left( \Gamma; \mathscr{B}_{\rho}\left[ F \right] \right)$.
\end{definition}

The definition of $\Omega^1 \left( \Gamma; \mathscr{B}_{\rho}\left[ F \right] \right)$ already characterized the condition under which a given collection of constant twisted local $1$-forms $\left\{\omega^{\left( i \right)}\in\Omega_i^1 \left( \Gamma; \mathscr{B}_{\rho}\left[ F \right] \right)\right\}$, one for each $U_i\in\mathfrak{U}$, can be patched to form a locally constant twisted global $1$-form. Similar to \eqref{eq:constant-local-sections-patch-together}, it suffices to check the compatibility under ``change of coordinates''.

\begin{lemma}
  A collection of constant twisted local $1$-forms $\left\{\omega^{\left( i \right)}\in\Omega_i^1 \left( \Gamma; \mathscr{B}_{\rho}\left[ F \right] \right)\right\}$ defines a locally constant twisted global $1$-form if and only if
\begin{equation}
\label{eq:global-one-form}
  p_{i,x}\left(\omega_{ij}^{\left( i \right)} \left( x \right)\right)=\rho_{ij}p_{j,x}\left(\omega_{ij}^{\left( j \right)}\left( x \right)\right),\quad\forall \left( i,j \right)\in E.
\end{equation}
Since both sides of equality \eqref{eq:global-one-form} are constants, we shall simplify \eqref{eq:global-one-form} as
\begin{equation}
\label{eq:global-one-form-simplified}
  p_i\left(\omega_{ij}^{\left( i \right)} \right)=\rho_{ij}p_j\left(\omega_{ij}^{\left( j \right)}\right),\quad\forall \left( i,j \right)\in E.
\end{equation}
\end{lemma}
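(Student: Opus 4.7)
The plan is to show that \eqref{eq:global-one-form} is exactly the condition demanded by the definition of $\Omega^1 \left( \Gamma; \mathscr{B}_{\rho}\left[ F \right] \right)$, by translating the change-of-coordinates rule from the local trivializations of $\mathscr{B}_{\rho}\left[ F \right]$ into a relation between fibre projections. Concretely, I will start from the observation that on any non-empty overlap $U_i\cap U_j$, the fibre projections satisfy $p_i = \rho_{ij}\circ p_j$, since the bundle transition function from the trivialization over $U_j$ to the trivialization over $U_i$ is the constant map $\rho_{ij}\in\mathrm{Aut}(F)$. This is precisely the content of the change-of-coordinates formula noted right after \eqref{eq:characterize-constant-local-section}, which I will reuse here directly.

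For the forward implication, suppose the collection $\left\{ \omega^{\left( i \right)} \right\}$ assembles into a locally constant twisted global $1$-form $\omega\in\Omega^1 \left( \Gamma; \mathscr{B}_{\rho}\left[ F \right] \right)$, meaning in particular that $\omega|_{U_i}=\omega^{\left( i \right)}$. Then condition (ii) in the definition of $\Omega^1 \left( \Gamma; \mathscr{B}_{\rho}\left[ F \right] \right)$, applied to any $x\in U_i\cap U_j$ and to the edge $\left( i,j \right)\in N_i\cap N_j$, gives $p_{i,x}\bigl(\omega^{\left( i \right)}_{ij}(x)\bigr)=\rho_{ij}\,p_{j,x}\bigl(\omega^{\left( j \right)}_{ij}(x)\bigr)$, which is exactly \eqref{eq:global-one-form}. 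For the converse, I will define $\omega$ on each pair $\bigl(i, \left( j,k \right)\bigr)$ with $\left( j,k \right)\in N_i$ by $\omega_{jk}\left( x \right):=\omega^{\left( i \right)}_{jk}\left( x \right)$ for $x\in U_i$; condition (i) is immediate since each $\omega^{\left( i \right)}$ is a constant twisted local $1$-form, and condition (ii) reduces, for the only overlap possibilities $x\in U_i\cap U_j$ with $\left( i,j \right)\in E$, to \eqref{eq:global-one-form} itself. This shows that $\omega$ is well-defined and lies in $\Omega^1 \left( \Gamma; \mathscr{B}_{\rho}\left[ F \right] \right)$.

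Finally, the simplification from \eqref{eq:global-one-form} to \eqref{eq:global-one-form-simplified} is immediate: by the first condition in the definition of $\Omega^1_i \left( \Gamma; \mathscr{B}_{\rho}\left[ F \right] \right)$, the maps $x\mapsto p_{i,x}\bigl(\omega^{\left( i \right)}_{ij}\left( x \right)\bigr)$ and $x\mapsto p_{j,x}\bigl(\omega^{\left( j \right)}_{ij}\left( x \right)\bigr)$ are constant on $U_i$ and $U_j$ respectively, so the $x$ dependence drops out and one may just write $p_i\bigl(\omega^{\left( i \right)}_{ij}\bigr)$ and $p_j\bigl(\omega^{\left( j \right)}_{ij}\bigr)$. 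There is no real obstacle here; the main bookkeeping subtlety — and the only place where care is needed — is to keep straight which fibre projection ($p_i$ versus $p_j$) is applied to which local $1$-form, and to verify that the skew-symmetry built into the definition of $\Omega^1_i \left( \Gamma \right)$ is compatible with, rather than independent of, the transition relation encoded by $\rho_{ij}$. Since $\rho_{ij}$ is linear and $\omega^{\left( i \right)}_{ij}=-\omega^{\left( i \right)}_{ji}$, this compatibility is automatic from \eqref{eq:global-one-form} together with $\rho_{ij}=\rho_{ji}^{-1}$, so no further consistency check is required.
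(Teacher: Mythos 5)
Your proof is correct and follows exactly the same route as the paper; in fact the paper does not even write out a proof for this Lemma, remarking only that the statement is a direct reading of the definition of $\Omega^1\left(\Gamma;\mathscr{B}_{\rho}\left[F\right]\right)$ (condition (ii) there is literally \eqref{eq:global-one-form} once $\omega|_{U_i}\left(x\right)$ is unpacked as $\omega^{\left(i\right)}_{ij}\left(x\right)$ for $x$ in the interior of the edge $\left(i,j\right)$). Your preliminary observation $p_i=\rho_{ij}\circ p_j$ is not actually invoked in the two implications, but it is the correct transcription of the change-of-coordinates rule stated after \eqref{eq:characterize-constant-local-section}, and the closing check that the $\left(j,i\right)$-version of \eqref{eq:global-one-form} is equivalent to the $\left(i,j\right)$-version via skew-symmetry and $\rho_{ji}=\rho_{ij}^{-1}$ is accurate and a useful thing to record even though the paper omits it.
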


A significant difference between $\Omega^1 \left( \Gamma; \mathscr{B}_{\rho}\left[ F \right] \right)$ and $\Omega^0 \left( \Gamma; \mathscr{B}_{\rho}\left[ F \right] \right)$ is that a locally constant twisted global $1$-form does not naturally arise from an $F$-valued $1$-cochain in $C^1 \left( \Gamma; F \right):=\left\{ \omega:E\rightarrow F\mid \omega_{ij}=-\omega_{ji} \right\}$,
and these cochains play an essential role in the discrete Hodge theory. For instance, if we set $\omega_{ij}^{\left( i \right)} \left( x \right):=p_{i,x}^{-1}\left( \omega_{ij} \right)=:-\omega^{\left( i \right)}_{ji} \left( x \right)$ for all $x\in U_i$, then $\left\{ \omega^{\left( i \right)}\mid i\in V \right\}$ gives rise to a twisted global $1$-form if and only if
\begin{equation*}
  \omega_{ij} = p_{i,x}\left(\omega_{ij}^{\left( i \right)} \left( x \right)\right)=\rho_{ij}p_{j,x}\left(\omega_{ij}^{\left( j \right)}\left( x \right)\right)=-\rho_{ij}p_{j,x}\left( \omega_{ji}^{\left( j \right)}\left( x \right) \right)=-\rho_{ij}\omega_{ji},\quad\forall \left( i,j \right)\in E,
\end{equation*}
a condition that is generally not satisfied unless $\rho_{ij}\equiv e\in G$ for all $\left( i,j \right)\in E$. This observation indicates that $C^1 \left( \Gamma; F \right)$ is not a geometric object naturally associated with the structure of the vector bundle $\mathscr{B}_{\rho}\left[ F \right]$, but rather a special case of $\Omega^1 \left( \Gamma; \mathscr{B}_{\rho}\left[ F \right] \right)$ when the vector bundle $\mathscr{B}_{\rho}\left[ F \right]$ is trivial. In this case $\rho_{ij}=g_{i}g_j^{-1},\,\forall \left( i,j \right)\in E$ for a $G$-valued vertex potential $g:V\rightarrow G$ and the ``gauge-transformed'' constant twisted local $1$-forms $\left\{ p_{i,x}^{-1} \left( g_i\omega_{ij} \right) \mid i\in V, \left( i,j \right)\in E \right\}$ satisfy the compatibility condition \eqref{eq:global-one-form}. Exploring the action of the gauge group on twisted forms will be considered in the future.

With an appropriate notion of twisted $1$-forms, we are ready to define the twisted differential operator on twisted $0$-forms. This operation is a discrete analog of the covariant derivatives for smooth fibre bundles, and in the meanwhile, a fibre bundle analog of the discrete exterior derivative in discrete Hodge theory.

\begin{definition}[Twisted Differential on Twisted $0$-Cochains]
For $\rho\in C^1 \left( \Gamma; G \right)$ and $U_i\in \mathfrak{U}$, the \emph{$\rho$-twisted differential} is a linear operator taking any $f\in C^0 \left( \Gamma; F \right)=\prod_{i\in V}\Omega_i^0 \left( \Gamma; \mathscr{B}_{\rho}\left[ F \right] \right)$ to a collection of $n$ constant twisted local $1$-forms, one for each $U_i\in\mathfrak{U}$ 
\begin{equation*}
  \begin{aligned}
    d_{\rho}:\prod_{i\in V}\Omega_i^0 \left( \Gamma; \mathscr{B}_{\rho}\left[ F \right] \right) &\longrightarrow \prod_{i\in V}\Omega_i^1 \left( \Gamma; \mathscr{B}_{\rho}\left[ F \right] \right)\\
    f &\longmapsto \left(\left( d_{\rho}f \right)^{\left( 1 \right)},\cdots, \left( d_{\rho}f \right)^{n}\right)
  \end{aligned}
\end{equation*}
where each $\left( d_{\rho}f \right)^{\left( i \right)}\in\Omega^1_i \left( \Gamma; \mathscr{B}_{\rho}\left[ F \right] \right)$ is defined as
\begin{equation}
  \label{eq:twisted_differential_vertical_projection_defn}
  \left(d_{\rho} f\right)^{\left( i \right)}_{ij}\left( x \right) := p_{i,x}^{-1}\left(f_i-\rho_{ij}f_j\right) =: -\left(d_{\rho} f\right)^{\left( i \right)}_{ji}\left( x \right),\quad\forall U_i\in\mathfrak{U},\,\,\forall x\in U_i \cap U_j \neq \emptyset,\,\,f\in\Omega_{\rho}^0 \left( \Gamma; F \right).
\end{equation}
\end{definition}

Though $d_{\rho}$ is defined as a linear operator mapping into a collection of constant twisted local $1$-forms, a somewhat surprising fact is that these constant twisted local $1$-forms do patch together to form an element of $\Omega^1 \left( \Gamma; \mathscr{B}_{\rho}\left[ F \right] \right)$.
\begin{proposition}
  The twisted differential $d_{\rho}$ maps $C^0 \left( \Gamma; F \right)$ into $\Omega^1 \left( \Gamma; \mathscr{B}_{\rho}\left[ F \right] \right)$.
\end{proposition}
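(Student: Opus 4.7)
The plan is to verify the compatibility condition that characterizes $\Omega^1(\Gamma;\mathscr{B}_{\rho}[F])$ as a subset of $\prod_{i\in V}\Omega_i^1(\Gamma;\mathscr{B}_{\rho}[F])$. Concretely, by the lemma preceding the proposition, it suffices to show that the tuple $((d_{\rho}f)^{(1)},\dots,(d_{\rho}f)^{(n)})$ produced by \eqref{eq:twisted_differential_vertical_projection_defn} satisfies equation \eqref{eq:global-one-form-simplified} for every edge $(i,j)\in E$, i.e. $p_i((d_{\rho}f)^{(i)}_{ij})=\rho_{ij}\,p_j((d_{\rho}f)^{(j)}_{ij})$.

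First I would quickly confirm that each $(d_{\rho}f)^{(i)}$ actually lies in $\Omega_i^1(\Gamma;\mathscr{B}_{\rho}[F])$. The constancy on $U_i$ is immediate because the vector $f_i-\rho_{ij}f_j\in F$ does not depend on $x$, so $x\mapsto p_{i,x}^{-1}(f_i-\rho_{ij}f_j)$ is a constant local section; the skew-symmetry in $(j,k)\in N_i$ is explicitly built into the definition via $(d_{\rho}f)^{(i)}_{ji}(x):=-(d_{\rho}f)^{(i)}_{ij}(x)$.

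The heart of the proof is then a one-line computation. The left-hand side of \eqref{eq:global-one-form-simplified} is
\[
p_i\bigl((d_{\rho}f)^{(i)}_{ij}\bigr)=p_{i,x}\circ p_{i,x}^{-1}(f_i-\rho_{ij}f_j)=f_i-\rho_{ij}f_j.
\]
For the right-hand side, I first express $(d_{\rho}f)^{(j)}_{ij}$ from the $j$-th chart: by the skew-symmetry in $\Omega_j^1$ and the definition applied to the oriented edge $(j,i)\in N_j$,
\[
(d_{\rho}f)^{(j)}_{ij}(x)=-(d_{\rho}f)^{(j)}_{ji}(x)=-p_{j,x}^{-1}(f_j-\rho_{ji}f_i)=p_{j,x}^{-1}(\rho_{ji}f_i-f_j).
\]
Applying the bundle transition function and using the cocycle relation $\rho_{ij}\rho_{ji}=e$ (which is part of the standing assumption that $\rho\in C^1(\Gamma;G)$ satisfies $\rho_{ij}=\rho_{ji}^{-1}$), I obtain
\[
\rho_{ij}\,p_j\bigl((d_{\rho}f)^{(j)}_{ij}\bigr)=\rho_{ij}(\rho_{ji}f_i-f_j)=f_i-\rho_{ij}f_j,
\]
matching the left-hand side.

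There is no serious obstacle here: the statement is really a consistency check asserting that the skew-symmetric convention for the local differential across two charts is compatible with the transition law via the inverse relation $\rho_{ji}=\rho_{ij}^{-1}$. The only subtlety worth flagging is the sign flip that arises because $(j,i)$ is the oppositely oriented copy of $(i,j)$, which is precisely what allows the factor of $\rho_{ij}\rho_{ji}$ to collapse to the identity.
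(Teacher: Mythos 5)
Your proof is correct and follows essentially the same route as the paper's: verify the compatibility condition \eqref{eq:global-one-form-simplified} by unwinding the definition of $d_{\rho}$, using skew-symmetry on the $j$-th chart and $\rho_{ij}\rho_{ji}=e$ to collapse the transition factor. The only difference is cosmetic — the paper manipulates the left-hand side to reach the right-hand side while you simplify the right-hand side down to $f_i-\rho_{ij}f_j$ — and your extra preliminary check that each $(d_{\rho}f)^{(i)}$ lies in $\Omega_i^1(\Gamma;\mathscr{B}_{\rho}[F])$ is a harmless addition the paper leaves implicit.
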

\begin{proof}
  It suffices to check \eqref{eq:global-one-form-simplified} for the collection of constant twisted local $1$-forms $\left\{ \left( d_{\rho}f \right)^{\left( i \right)}\,\big|\,i=1,\cdots,n\right\}$. In fact,
  \begin{equation*}
    p_i\left( \left(d_{\rho} f\right)^{\left( i \right)}_{ij} \right)=f_i-\rho_{ij}f_j=-\rho_{ij} \left( f_j-\rho_{ji}f_i \right)=-\rho_{ij}p_j\left( \left(d_{\rho} f\right)^{\left( j \right)}_{ji} \right)=\rho_{ij}p_j\left( \left(d_{\rho} f\right)^{\left( j \right)}_{ij}\right)\quad\forall \left( i,j \right)\in E.
  \end{equation*}
\end{proof}

Since the graph $\Gamma$ (viewed as a simplicial complex) does not contain any $2$-simplices, $d_{\rho}$ is the only differential needed for specifying the \emph{$\rho$-twisted chain complex}
\begin{equation}
\label{eq:twisted_chain_complex}
  0\longrightarrow C^0 \left( \Gamma;F \right) \stackrel{\textrm{\footnotesize $d_{\rho}$}}{\longrightarrow} \Omega^1 \left( \Gamma; \mathscr{B}_{\rho}\left[ F \right] \right) \longrightarrow 0.
\end{equation}
The only non-trivial cohomology group in this de Rham-type chain complex is at the $0$-th order
\begin{equation*}
  H_{\rho}^0 \left( \Gamma; \mathscr{B}_{\rho}\left[ F \right] \right) := \ker d_{\rho}.
\end{equation*}
\begin{proposition}
\label{prop:twisted-diff-ker-global-sections}
  $\ker d_{\rho}=\Omega^0 \left( \Gamma; \mathscr{B}_{\rho}\left[ F \right] \right)$.
\end{proposition}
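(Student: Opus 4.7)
The plan is to unwind the definition of $d_\rho$ and invoke Lemma~\ref{lem:local-compatibility-form-global}; essentially no work beyond matching definitions is required, since the twisted differential was designed precisely so that its vanishing encodes the compatibility condition \eqref{eq:global_section}.

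First I would take $f \in C^0(\Gamma; F)$ and observe that, by \eqref{eq:twisted_differential_vertical_projection_defn}, $d_\rho f = 0$ means $(d_\rho f)^{(i)}_{ij}(x) = p_{i,x}^{-1}(f_i - \rho_{ij} f_j) = 0$ for every $i \in V$, every $(i,j) \in E$, and every $x \in U_i$. Since each fibre projection $p_{i,x}: \mathscr{B}_\rho[F]_x \to F$ is a vector space isomorphism, this is equivalent to
\begin{equation*}
f_i - \rho_{ij} f_j = 0 \quad \forall (i,j) \in E,
\end{equation*}
which is precisely the synchronization constraint \eqref{eq:global_section}. Note that the two tautologies $(d_\rho f)^{(i)}_{ii} = 0$ at self-loops and $(d_\rho f)^{(i)}_{ji} = -(d_\rho f)^{(i)}_{ij}$ on reversed edges produce no extra content; the condition is genuinely only over unoriented edges of $\Gamma$.

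Next I would invoke Lemma~\ref{lem:local-compatibility-form-global}: the constant local sections $\{f^{(i)}\}_{i \in V}$ defined via \eqref{eq:local_sections_assoc_omega_zero} patch together to a global section of $\mathscr{B}_\rho[F]$ if and only if $f_i = \rho_{ij} f_j$ for all $(i,j) \in E$. Such a global section is automatically locally constant in the sense of Definition~\ref{defn:locally_constant_global_section}, since its fibre projection over each $U_i$ equals the constant $f_i$ by construction \eqref{eq:characterize-constant-local-section}. Via the embedding \eqref{eq:lcwg-zero-forms-embed-vert-potentials}, this exhibits $f$ as an element of $\Omega^0(\Gamma; \mathscr{B}_\rho[F])$.

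Conversely, any $s \in \Omega^0(\Gamma; \mathscr{B}_\rho[F])$ corresponds via \eqref{eq:lcwg-zero-forms-embed-vert-potentials} to $f \in C^0(\Gamma; F)$ with $f_i := p_i(s(i))$, and since $s$ is a well-defined global section its values must agree across any nonempty $U_i \cap U_j$, forcing $f_i = \rho_{ij} f_j$ and hence $d_\rho f = 0$. Chaining the two equivalences closes the loop $\ker d_\rho \subseteq \Omega^0(\Gamma; \mathscr{B}_\rho[F]) \subseteq \ker d_\rho$. There is no real obstacle here; the only thing to be mildly careful about is distinguishing the set-theoretic equality of $C^0(\Gamma; F)$ with $\prod_i \Omega_i^0$ from the strictly smaller subspace cut out by the patching conditions, which is exactly the content of $\Omega^0(\Gamma; \mathscr{B}_\rho[F])$.
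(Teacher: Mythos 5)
Your proof is correct and follows the same route as the paper's: unwind $d_\rho f = 0$ via the fibre-projection isomorphisms to get $f_i = \rho_{ij} f_j$ for all $(i,j) \in E$, then invoke Lemma~\ref{lem:local-compatibility-form-global} to identify this with membership in $\Omega^0(\Gamma; \mathscr{B}_\rho[F])$. The paper's proof is a terse two-line version of exactly this; your added commentary on the redundant reversed-edge and local-constancy conditions is accurate but not logically necessary.
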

\begin{proof}
  Note in the definition \eqref{eq:twisted_differential_vertical_projection_defn} that
  \begin{equation*}
    f\in\ker d_{\rho}\quad\Leftrightarrow\quad f_i=\rho_{ij}f_j,\,\,\forall \left( i,j \right)\in E.
  \end{equation*}
The conclusion then follows from Lemma~\ref{lem:local-compatibility-form-global}.
\end{proof}

By Proposition~\ref{prop:triviality_of_principal_and_associated_bundles}, detecting the synchronizability of $\rho\in C^{1} \left( \Gamma;G \right)$ now reduces to checking if $\dim\ker d_{\rho}=\dim F$ holds. Furthermore, in scenarios where this dimension equality does not hold, $\dim\ker d_{\rho}$ still provides a quantitative measure for the extent to which synchronizability fails. In this sense, the cohomology group $H_{\rho}^0 \left( \Gamma;F \right)$ serves as the opposite of a ``topological obstruction'' to the synchronizability of $\rho\in C^1 \left( \Gamma; G \right)$.

\subsubsection{Twisted Hodge Theory and Synchronizability}
\label{sec:twisted-laplacians}

In the remainder of this section, we will focus on flat associated bundles $\mathscr{B}_{\rho}\left[ F \right]$ with the vector space $F$ equipped with an inner product $\left\langle \cdot,\cdot \right\rangle_F:F\times F\rightarrow \mathbb{K}$, where $\mathbb{K}=\mathbb{C}$ or $\mathbb{R}$ depending on the vector space $F$. This inner product on $F$ will be further assumed with \emph{$G$-invariance}, in the sense that
\begin{equation*}
  \langle gx, gy \rangle_F = \langle x, y \rangle_F \quad\forall x,y\in F, g\in G.
\end{equation*}
In the terminology of representation theory, we assume that the representation of $G$ on $F$ is \emph{unitary} (c.f. \cite[\S II.1]{Broecker2003GTM98}). This inner product introduces other related concepts into the geometric framework:

\begin{itemize}
\item $F$ is equipped with a $G$-invariant norm $\left\|x\right\|_F = \langle x, x \rangle_F$ for all $x\in F$, which further induces an operator norm on $G$ via duality
\begin{equation*}
  \left\| g \right\| := \sup_{\substack{m\in F\\ \left\| m \right\|\neq 0}} \frac{\left\| gm \right\|}{\left\| m \right\|},\quad\forall g\in G.
\end{equation*}
For simplicity, we will use the same notation for the norm on $F$ and the dual norm on $G$.

\item\label{item:formal_adjoint} For any $g\in G$, its \emph{formal adjoint} with respect to the inner product $\left\langle \cdot,\cdot \right\rangle_F$, denoted as $g^{*}$, is defined as
\begin{equation*}
  \langle gx, y \rangle = \langle x, g^{*}y \rangle\quad \forall x,y\in F.
\end{equation*}
Note that $\left\| g^{*} \right\|=\left\| g \right\|$ for any $g\in G$.

\item The twisted $0$-cochains $C^0 \left( \Gamma; F \right)$ and locally constant twisted global $1$-forms $\Omega^1 \left( \Gamma; \mathscr{B}_{\rho}\left[ F \right] \right)$ are equipped with inner products and norms induced from the $G$-invariant inner product $\left\langle \cdot,\cdot \right\rangle_F$, as follows:
\begin{align}
   \left\langle f, g \right\rangle := \sum_{i\in V} d_i\left\langle f_i, g_i \right\rangle_F,&\quad\forall f, g\in C^0 \left( \Gamma;F \right) \label{eq:vertex-potential-inner-product}\\
   \left\langle \omega, \eta \right\rangle := \frac{1}{2}\sum_{i\in V}\sum_{j:\left( i,j \right)\in E}w_{ij} \left\langle p_i\left(\omega^{\left( i \right)}_{ij}\right), p_i\left(\eta^{\left( i \right)}_{ij}\right) \right\rangle_F,&\quad\forall \omega,\eta\in\Omega^1 \left( \Gamma; \mathscr{B}_{\rho}\left[ F \right] \right)\label{eq:edge-potential-inner-product}\\
   \left\| f \right\| := \left\langle f,f \right\rangle^{\frac{1}{2}}, \quad \left\| \omega \right\| := \left\langle \omega,\omega \right\rangle^{\frac{1}{2}}, &\quad\forall f\in C^{0}\left( \Gamma; F \right),\omega\in \Omega^1 \left( \Gamma; \mathscr{B}_{\rho}\left[ F \right] \right)\label{eq:vert-egde-potentials-norm}
 \end{align}
where $w_{ij}$ is the weight on edge $\left( i,j \right)\in E$ and $d_i=\sum_{j:\left( i,j \right)\in E}w_{ij}$ is the weighted degree of vertex $v_i$. Note that by the $G$-invariance of $\left\langle \cdot,\cdot \right\rangle_F$ the sum in \eqref{eq:edge-potential-inner-product} can be equivalently written as (see \ref{sec:appendix-proof-fibre-bundle-sync} for a quick calculation)
\begin{equation}
  \label{eq:edge-potential-inner-product-equivalent}
  \left\langle \omega, \eta \right\rangle = \sum_{\left( i,j \right)\in E}w_{ij} \left\langle p_i\left(\omega^{\left( i \right)}_{ij}\right), p_i\left(\eta^{\left( i \right)}_{ij}\right) \right\rangle_F.
\end{equation}
\end{itemize}

Through local trivializations, an inner product on the vector space $F$ also induces a \emph{bundle metric} on $\mathscr{B}_{\rho}\left[ F \right]$, i.e., a section of the second symmetric power of the dual bundle of $\mathscr{B}_{\rho}\left[ F \right]$ which restricts to each fibre as a symmetric positive definite quadratic form. As is well known (see e.g. \cite[Chapter 7]{Taubes2011DG}), a bundle metric can be used to reduce the structure group of a vector bundle from $\GL \left( F \right)$ to $O \left( d \right)$ or $U \left( d \right)$, where $d=\dim \left( F \right)$. It suffices to consider global sections of $\mathscr{B}_{\rho}\left[ F \right]$ for $\rho \in C^1 \left( \Gamma;O\left(d \right)\right)$ or $\rho \in C^1 \left( \Gamma; U\left(d \right)\right)$ for many synchronization problems of practical interest \cite{BSS2013,BKS2016,CKS2015}, instead of requiring $\rho \in C^1 \left( \Gamma;\GL \left( d;\mathbb{R}\right) \right)$ or $\rho \in C^1 \left( \Gamma;\GL \left( d;\mathbb{C} \right) \right)$. Other important types of synchronization problem involving $\SO \left( d \right)$ and $\SU \left( d \right)$ can be treated in this geometric framework as determining global sections of \emph{orientable} vector bundles (see e.g. \cite[Chapter 7]{Taubes2011DG} or \cite[Proposition 6.4]{BottTu1982}). Also note that $\rho_{ij}^{-1}=\rho_{ij}^{*}$ for edge potentials in all these special matrix groups. Since the bundle reduction allows us to focus only on synchronization problems with orthogonal or unitary matrices, without loss of generality, we will always assume the edge potentials satisfy $\rho_{ji}=\rho_{ij}^{-1}=\rho_{ij}^{*}$ for all $\left( i,j \right)\in E$. The same is assumed in \cite{BSS2013,BKS2016}.

The inner product structures on $C^0 \left( \Gamma; F \right)$ and $\Omega^1 \left( \Gamma; \mathscr{B}_{\rho}\left[ F \right] \right)$ enable us to define the \emph{$\rho$-twisted codifferential} $\delta_{\rho}:\Omega^1 \left( \Gamma; \mathscr{B}_{\rho}\left[ F \right] \right)\rightarrow C^0 \left( \Gamma; F \right)$, the formal adjoint operator of the twisted differential $d_{\rho}: C^0 \left( \Gamma; F \right)\rightarrow \Omega^1 \left( \Gamma; \mathscr{B}_{\rho}\left[ F \right] \right)$ in the chain complex \eqref{eq:twisted_chain_complex}, eventually leading to a twisted Hodge theory for synchronization problems. The definition of $\delta_{\rho}$ is consistent with the discrete divergence operator in discrete Hodge theory \cite{JLYY2011,JohnsonGoldring2013}:
\begin{equation}
\begin{aligned}
  \delta_{\rho}:\Omega^1 \left( \Gamma; \mathscr{B}_{\rho}\left[ F \right] \right)&\longrightarrow C^0 \left( \Gamma; F \right)\\
  \theta &\longmapsto \left(\left( \delta_{\rho}\theta \right)\big|_{U_1},\cdots, \left( \delta_{\rho}\theta \right)\big|_{U_{n}}\right)
\end{aligned}
\end{equation}
where each $\left( \delta_{\rho}\theta \right)\big|_{U_i}\in $ is defined by
\begin{equation}
  \label{eq:twisted_codifferential_defn}
  \left(\delta_{\rho}\theta\right)\big|_{U_i} \left( x \right) = p_{i,x}^{-1}\left( \frac{1}{d_i}\sum_{j: \left( i,j \right)\in E}w_{ij}p_i\left(\theta^{\left( i \right)}_{ij}\right)\right)\quad \forall x\in U_i,\,\,\theta\in\Omega^1 \left( \Gamma; \mathscr{B}_{\rho}\left[ F \right] \right)
\end{equation}
or equivalently
\begin{equation}
  \label{eq:twisted_codifferential_defn_equivalent}
  \left( \delta_{\rho}\theta \right)_i = p_i\left(\left(\delta_{\rho}\theta\right)\big|_{U_i} \left( i \right)\right) = \frac{1}{d_i}\sum_{j: \left( i,j \right)\in E}w_{ij}p_i\left(\theta^{\left( i \right)}_{ij}\right)\quad \forall i\in V,\,\,\theta\in\Omega^1 \left( \Gamma; \mathscr{B}_{\rho}\left[ F \right] \right).
\end{equation}

\begin{proposition}
\label{prop:codifferentail_well_defined}
With respect to the inner products \eqref{eq:vertex-potential-inner-product} and \eqref{eq:edge-potential-inner-product}, the twisted codifferential $\delta_{\rho}:\Omega^1 \left( \Gamma; \mathscr{B}_{\rho}\left[ F \right] \right)\rightarrow C^0 \left( \Gamma; F \right)$ defined by \eqref{eq:twisted_codifferential_defn} is the formal adjoint of the twisted differential $d_{\rho}:C^0 \left( \Gamma; F \right)\rightarrow \Omega^1 \left( \Gamma; \mathscr{B}_{\rho}\left[ F \right] \right)$ defined by \eqref{eq:twisted_differential_vertical_projection_defn}.
\end{proposition}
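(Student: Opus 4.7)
The plan is a direct verification that $\langle d_{\rho} f,\theta\rangle = \langle f,\delta_{\rho}\theta\rangle$ for arbitrary $f\in C^0(\Gamma;F)$ and $\theta\in\Omega^1(\Gamma;\mathscr{B}_{\rho}[F])$, using only (a) the definitions \eqref{eq:twisted_differential_vertical_projection_defn} and \eqref{eq:twisted_codifferential_defn}, (b) $G$-invariance of $\langle\cdot,\cdot\rangle_F$, which gives $\rho_{ij}^{*}=\rho_{ij}^{-1}=\rho_{ji}$, (c) the global compatibility relation \eqref{eq:global-one-form-simplified} $p_i(\theta^{(i)}_{ij})=\rho_{ij}\,p_j(\theta^{(j)}_{ij})$, and (d) the skew-symmetry $p_j(\theta^{(j)}_{ij})=-p_j(\theta^{(j)}_{ji})$ of each constant twisted local $1$-form.

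First I would expand the left-hand side via \eqref{eq:edge-potential-inner-product}, writing
\[
\langle d_{\rho}f,\theta\rangle \;=\; \tfrac{1}{2}\sum_{i\in V}\sum_{j:(i,j)\in E} w_{ij}\,\bigl\langle f_i-\rho_{ij}f_j,\;p_i(\theta^{(i)}_{ij})\bigr\rangle_F,
\]
and split the right-hand slot into the two pieces $f_i$ and $-\rho_{ij}f_j$. The ``diagonal'' piece $\tfrac{1}{2}\sum_{i,j}w_{ij}\langle f_i,p_i(\theta^{(i)}_{ij})\rangle_F$ collapses immediately into $\tfrac{1}{2}\langle f,\delta_{\rho}\theta\rangle$ by the definition \eqref{eq:twisted_codifferential_defn_equivalent} of $\delta_{\rho}$ together with the weighted inner product \eqref{eq:vertex-potential-inner-product}, since $d_i^{-1}$ in \eqref{eq:twisted_codifferential_defn_equivalent} cancels the factor $d_i$ in \eqref{eq:vertex-potential-inner-product}.

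The substantive step is the ``off-diagonal'' piece
\[
-\tfrac{1}{2}\sum_{i\in V}\sum_{j:(i,j)\in E} w_{ij}\,\bigl\langle \rho_{ij}f_j,\;p_i(\theta^{(i)}_{ij})\bigr\rangle_F.
\]
Using $G$-invariance I move $\rho_{ij}$ to the other slot as its adjoint $\rho_{ji}$, then invoke the global compatibility condition to rewrite $\rho_{ji}\,p_i(\theta^{(i)}_{ij}) = p_j(\theta^{(j)}_{ij})$, and finally use skew-symmetry to turn this into $-p_j(\theta^{(j)}_{ji})$. After these three substitutions the expression becomes
\[
\tfrac{1}{2}\sum_{i\in V}\sum_{j:(i,j)\in E} w_{ij}\,\bigl\langle f_j,\;p_j(\theta^{(j)}_{ji})\bigr\rangle_F,
\]
and a renaming $i\leftrightarrow j$, together with the symmetry $w_{ij}=w_{ji}$, identifies this with the same diagonal expression handled above, hence with $\tfrac{1}{2}\langle f,\delta_{\rho}\theta\rangle$. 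Summing the two halves yields $\langle f,\delta_{\rho}\theta\rangle$, completing the verification.

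The only subtle point, and thus the place I would write most carefully, is the bookkeeping in the off-diagonal term: one must make sure to apply the compatibility identity on the correct side (since it relates $p_i(\theta^{(i)}_{ij})$ to $p_j(\theta^{(j)}_{ij})$, not to $p_j(\theta^{(j)}_{ji})$), and then invoke skew-symmetry inside the fibre $F_j$ to flip the subscript before re-indexing the outer sum. The use of $\rho_{ij}^{*}=\rho_{ji}$ is where the standing reduction to an orthogonal/unitary structure group, noted in the paragraph preceding the proposition, is genuinely used; without this reduction one would have to either restrict to bi-invariant inner products or insert adjoints explicitly throughout the statement.
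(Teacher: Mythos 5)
Your proof is correct and takes essentially the same approach as the paper: both verify $\langle d_{\rho}f,\theta\rangle=\langle f,\delta_{\rho}\theta\rangle$ via the same three algebraic inputs ($G$-invariance with $\rho_{ij}^{*}=\rho_{ji}$, the compatibility relation \eqref{eq:global-one-form-simplified}, and skew-symmetry), paired with a double-counting/re-indexing over edges. The only cosmetic difference is that you expand $\langle d_{\rho}f,\theta\rangle$ from \eqref{eq:edge-potential-inner-product} and absorb the factor of $\tfrac{1}{2}$ via the diagonal/off-diagonal split, whereas the paper starts from $\langle f,\delta_{\rho}\theta\rangle$ and finishes by invoking the pre-simplified identity \eqref{eq:edge-potential-inner-product-equivalent}.
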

\begin{proof}
  Note that for any $f\in C^0 \left( \Gamma; F \right)$, $\theta\in\Omega^1 \left( \Gamma; \mathscr{B}_{\rho}\left[ F \right] \right)$,
  \begin{equation*}
    \begin{aligned}
      \left\langle f,\delta_{\rho}\theta \right\rangle &= \sum_{i\in V}d_i\left\langle f_i, \frac{1}{d_i}\sum_{j:\left( i,j \right)\in E}w_{ij}p_{i}\left(\theta^{\left( i \right)}_{ij}\right) \right\rangle_F = \sum_{i\in V}\sum_{j:\left( i,j \right)\in E}\left\langle f_i, w_{ij}p_i \left(\theta^{\left( i \right)}_{ij}\right) \right\rangle_F\\
      &=\sum_{\left( i,j \right)\in E} \left[ \left\langle f_i, w_{ij} p_i\left( \theta^{\left( i \right)}_{ij}\right) \right\rangle_F+\left\langle f_j,w_{ji} p_j\left(\theta^{\left( j \right)}_{ji}\right) \right\rangle_F \right]\\
      &=\sum_{\left( i,j \right)\in E}\left\langle f_i, w_{ij}p_{i}\left(\theta^{\left( i \right)}_{ij}\right) \right\rangle_F+\sum_{\left( i,j \right)\in E}\left\langle f_j,w_{ji}p_{j}\left(\theta^{\left( j \right)}_{ji}\right) \right\rangle_F =: \left( I \right) + \left( II \right).
    \end{aligned}
  \end{equation*}
We keep the term $\left( I \right)$ intact and manipulate term $\left( II \right)$ using $w_{ij}=w_{ji}$ and the $G$-invariance of $\left\langle \cdot,\cdot \right\rangle_F$:
\begin{equation*}
  \left( II \right) = \sum_{\left( i,j \right)\in E}\left\langle \rho_{ij}f_j,w_{ji}\rho_{ij}p_{j}\left(\theta^{\left( j \right)}_{ji}\right) \right\rangle_F\stackrel{(*)}{=\!=}\sum_{\left( i,j \right)\in E}\left\langle \rho_{ij}f_j,w_{ij}p_{i}\left(\theta^{\left( i \right)}_{ji}\right) \right\rangle_F\stackrel{(**)}{=\!=}-\sum_{\left( i,j \right)\in E}\left\langle \rho_{ij}f_j,w_{ij}p_{i}\left(\theta^{\left( i \right)}_{ij}\right) \right\rangle_F,
\end{equation*}
where we used $\rho_{ij}p_j\left(\theta_{ji}^{\left( j \right)}\right)=p_{i}\left(\theta_{ji}^{\left( i \right)}\right)$ (the compatibility condition \eqref{eq:global-one-form-simplified}) at $(*)$, and the skew-symmetry $\theta_{ji}^{\left( i \right)}=-\theta_{ij}^{\left( i \right)}$ at $(**)$. Re-combining $\left( I \right)$ and $\left( II \right)$, we conclude that
\begin{equation*}
  \begin{aligned}
    \left\langle f,\delta_{\rho}\theta \right\rangle &= \left( I \right) + \left( II \right)=\sum_{\left( i,j \right)\in E}\left\langle f_i, w_{ij}p_i\left(\theta^{\left( i \right)}_{ij}\right) \right\rangle_F-\sum_{\left( i,j \right)\in E}\left\langle \rho_{ij}f_j,w_{ij} p_i\left( \theta^{\left( i \right)}_{ij}\right) \right\rangle_F\\
    &=\sum_{\left( i,j \right)\in E}\left\langle f_i-\rho_{ij}f_j, w_{ij} p_i\left(\theta_{ij}^{\left( i \right)}\right) \right\rangle_F = \sum_{\left( i,j \right)\in E}w_{ij}\left\langle p_i\left(\left(d_{\rho}f\right)^{\left( i \right)}_{ij}\right), p_i\left(\theta_{ij}^{\left( i \right)}\right) \right\rangle_F=\left\langle d_{\rho}f, \theta \right\rangle.
  \end{aligned}
\end{equation*}
\end{proof}

The chain complex \eqref{eq:twisted_chain_complex} is now also equipped with formal adjoints:
\begin{equation}
\label{eq:twisted_chain_complex_with_adjoint}
  0\mathrel{\substack{\xrightarrow[\hspace{0.15in}]{} \\ \vspace{-0.15in}  \\ \xleftarrow[]{\hspace{0.15in}}}}C^0 \left( \Gamma;F \right) \mathrel{\substack{\xrightarrow[\hspace{0.15in}]{\textrm{\footnotesize $d_{\rho}$}} \\ \vspace{-0.15in}  \\ \xleftarrow[\textrm{\footnotesize $\delta_{\rho}$}]{\hspace{0.15in}}}}\Omega^1 \left( \Gamma;\mathscr{B}_{\rho}\left[ F \right] \right) \mathrel{\substack{\xrightarrow[\hspace{0.15in}]{} \\ \vspace{-0.15in}  \\ \xleftarrow[]{\hspace{0.15in}}}} 0.
\end{equation}
Two \emph{twisted Hodge Laplacians} can be constructed from this chain complex:
\begin{align}
  \Delta_{\rho}^{\left( 0 \right)}:=\,\,\delta_{\rho}d_{\rho}:\, &C^0 \left( \Gamma; F \right)\longrightarrow C^0\left( \Gamma; F \right),\label{eq:twisted_hodge_laplacians_zero}\\
  \Delta_{\rho}^{\left( 1 \right)}:=\,\,d_{\rho}\delta_{\rho}:\, &\Omega^1 \left( \Gamma; \mathscr{B}_{\rho}\left[ F \right] \right)\longrightarrow \Omega^1\left( \Gamma; \mathscr{B}_{\rho}\left[ F \right] \right).\label{eq:twisted_hodge_laplacians_one}
\end{align}
It is straightforward to see from these definitions that both twisted Laplacians are positive definite. In view of Hodge theory, it would be of interest to investigate the \emph{harmonic forms} in the complex \eqref{eq:twisted_chain_complex_with_adjoint}, the kernels of $\Delta_{\rho}^{\left( 0 \right)}$ and $\Delta_{\rho}^{\left( 1 \right)}$.

\begin{lemma}
\label{lem:equal_kernels}
$\ker d_{\rho}=\ker\Delta_{\rho}^{\left( 0 \right)}$ and $\ker \delta_{\rho}=\ker\Delta_{\rho}^{\left( 1 \right)}$.
\end{lemma}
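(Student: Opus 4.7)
The plan is to prove both equalities by a standard adjoint/positive-definiteness argument, exploiting the fact that $\delta_{\rho}$ is the formal adjoint of $d_{\rho}$ with respect to the inner products \eqref{eq:vertex-potential-inner-product} and \eqref{eq:edge-potential-inner-product}, as established in Proposition~\ref{prop:codifferentail_well_defined}.

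For the first equality, the inclusion $\ker d_{\rho}\subseteq\ker\Delta_{\rho}^{(0)}$ is immediate from the definition $\Delta_{\rho}^{(0)}=\delta_{\rho}d_{\rho}$, since $d_{\rho}f=0$ forces $\delta_{\rho}d_{\rho}f=0$. For the reverse inclusion, given $f\in\ker\Delta_{\rho}^{(0)}$, I pair with $f$ itself and use the adjoint relation:
\begin{equation*}
0=\left\langle \Delta_{\rho}^{(0)}f,\,f\right\rangle=\left\langle \delta_{\rho}d_{\rho}f,\,f\right\rangle=\left\langle d_{\rho}f,\,d_{\rho}f\right\rangle=\left\| d_{\rho}f\right\|^{2}.
\end{equation*}
Since the inner product \eqref{eq:edge-potential-inner-product} on $\Omega^{1}(\Gamma;\mathscr{B}_{\rho}[F])$ is positive definite (inherited from the $G$-invariant inner product $\langle\cdot,\cdot\rangle_{F}$ and the positive edge weights $w_{ij}$), this forces $d_{\rho}f=0$.

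The second equality is established by the dual argument. The inclusion $\ker\delta_{\rho}\subseteq\ker\Delta_{\rho}^{(1)}$ is again immediate from $\Delta_{\rho}^{(1)}=d_{\rho}\delta_{\rho}$. Conversely, for $\omega\in\ker\Delta_{\rho}^{(1)}$, the same pairing trick yields
\begin{equation*}
0=\left\langle \Delta_{\rho}^{(1)}\omega,\,\omega\right\rangle=\left\langle d_{\rho}\delta_{\rho}\omega,\,\omega\right\rangle=\left\langle \delta_{\rho}\omega,\,\delta_{\rho}\omega\right\rangle=\left\|\delta_{\rho}\omega\right\|^{2},
\end{equation*}
and positive definiteness of the inner product \eqref{eq:vertex-potential-inner-product} on $C^{0}(\Gamma;F)$ (which holds because all weighted degrees $d_{i}$ are strictly positive under the connectivity assumption on $\Gamma$) forces $\delta_{\rho}\omega=0$.

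There is no substantial obstacle here: the proof is a direct transcription of the classical Hodge-theoretic identity $\ker(\delta d)=\ker d$ and $\ker(d\delta)=\ker\delta$ into the twisted setting, and the only nontrivial ingredients — the adjointness of $d_{\rho}$ and $\delta_{\rho}$ and the positive definiteness of the two inner products — are already in hand. The one place requiring a small observation is the positive definiteness of \eqref{eq:vertex-potential-inner-product}, which tacitly assumes $d_{i}>0$ for every $i\in V$; this is harmless since vertices of weighted degree zero are isolated and can be removed without loss of generality given the standing connectivity hypothesis on $\Gamma$.
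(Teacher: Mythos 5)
Your argument is exactly the one in the paper: both directions via the adjoint identity $\langle\Delta_\rho^{(0)}f,f\rangle=\|d_\rho f\|^2$ (and its dual), with the paper leaving the second equality as "a similar argument" which you have simply written out. The added remark about $d_i>0$ being needed for positive definiteness of \eqref{eq:vertex-potential-inner-product} is a correct and reasonable observation, but it is not a departure from the paper's approach.
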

\begin{proof}
  Clearly $\ker d_{\rho}\subset \ker\Delta_{\rho}^{\left( 0 \right)}$. For the reverse inclusion, note that by adjointness
  \begin{equation*}
    0=\left\langle f, \Delta_{\rho}^{\left( 0 \right)}f \right\rangle=\left\| d_{\rho}f \right\|^2\quad \forall f\in\ker \Delta_{\rho}^{\left( 0 \right)},
  \end{equation*}
which implies $d_{\rho}f=0$. The equality involving $\ker\Delta_{\rho}^{\left( 1 \right)}$ follows from a similar argument.
\end{proof}

The following decomposition results follow from standard Hodge-theoretic arguments.
\begin{theorem}
\label{thm:hodge_decomp}
$C^0\left( \Gamma; F \right)=\ker\Delta_{\rho}^{\left( 0 \right)}\oplus\im \delta_{\rho}=\ker d_{\rho}\oplus\im \delta_{\rho}$,$\quad$ $\Omega^1 \left( \Gamma; \mathscr{B}_{\rho}\left[ F \right] \right)=\im d_{\rho}\oplus \ker\Delta_{\rho}^{\left( 1 \right)}=\im d_{\rho}\oplus\ker \delta_{\rho}$.
\end{theorem}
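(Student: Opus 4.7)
The plan is to invoke the standard orthogonal decomposition theorem for finite-dimensional inner product spaces, applied to the adjoint pair $(d_\rho, \delta_\rho)$ from the cochain complex \eqref{eq:twisted_chain_complex_with_adjoint}. Both $C^0\!\left(\Gamma; F\right)$ and $\Omega^1\!\left(\Gamma; \mathscr{B}_\rho\!\left[F\right]\right)$ are finite-dimensional (the first has dimension $nd$; the second is a subspace of the finite-dimensional space $\prod_{i\in V}\Omega^1_i\!\left(\Gamma;\mathscr{B}_\rho\!\left[F\right]\right)$), and by Proposition~\ref{prop:codifferentail_well_defined} the maps $d_\rho$ and $\delta_\rho$ are formal adjoints with respect to the inner products \eqref{eq:vertex-potential-inner-product} and \eqref{eq:edge-potential-inner-product}. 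Those inner products are positive definite because the edge weights $w_{ij}$ and weighted degrees $d_i$ are strictly positive and $\left\langle\cdot,\cdot\right\rangle_F$ is an inner product on $F$.

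First I would establish the orthogonal relation $\ker d_\rho = \left(\im \delta_\rho\right)^\perp$ inside $C^0\!\left(\Gamma;F\right)$. For any $f\in C^0\!\left(\Gamma;F\right)$, the adjointness identity $\left\langle d_\rho f,\theta\right\rangle = \left\langle f,\delta_\rho\theta\right\rangle$ shows that $f\in\ker d_\rho$ if and only if $\left\langle f,\delta_\rho\theta\right\rangle=0$ for every $\theta\in\Omega^1\!\left(\Gamma;\mathscr{B}_\rho\!\left[F\right]\right)$, which is precisely the statement $f\perp \im\delta_\rho$. By finite-dimensionality, $C^0\!\left(\Gamma;F\right)=\ker d_\rho\oplus\left(\ker d_\rho\right)^\perp=\ker d_\rho\oplus\im\delta_\rho$. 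Lemma~\ref{lem:equal_kernels} then identifies $\ker d_\rho$ with $\ker\Delta_\rho^{(0)}$, yielding both equalities in the first decomposition.

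Second, the same argument with the roles swapped gives the dual decomposition. For any $\theta\in\Omega^1\!\left(\Gamma;\mathscr{B}_\rho\!\left[F\right]\right)$, adjointness gives $\theta\in\ker\delta_\rho$ iff $\left\langle d_\rho f,\theta\right\rangle=\left\langle f,\delta_\rho\theta\right\rangle=0$ for all $f\in C^0\!\left(\Gamma;F\right)$, i.e.\ $\ker\delta_\rho=\left(\im d_\rho\right)^\perp$. Finite-dimensionality then produces $\Omega^1\!\left(\Gamma;\mathscr{B}_\rho\!\left[F\right]\right)=\im d_\rho\oplus\ker\delta_\rho$, and applying Lemma~\ref{lem:equal_kernels} once more replaces $\ker\delta_\rho$ by $\ker\Delta_\rho^{(1)}$.

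There is essentially no analytic obstacle here; the argument is entirely algebraic and reduces to the rank-nullity/adjoint machinery of finite-dimensional linear algebra, together with the previously established adjointness (Proposition~\ref{prop:codifferentail_well_defined}) and kernel identifications (Lemma~\ref{lem:equal_kernels}). The only point that requires a modicum of care is verifying that $\Omega^1\!\left(\Gamma;\mathscr{B}_\rho\!\left[F\right]\right)$ is a genuine inner product space under \eqref{eq:edge-potential-inner-product} rather than just a seminormed one; this is immediate from the equivalent form \eqref{eq:edge-potential-inner-product-equivalent}, whose positivity follows from the $G$-invariance of $\left\langle\cdot,\cdot\right\rangle_F$ together with positivity of the edge weights. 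Thus no genuine difficulty arises, and the theorem follows cleanly in two short applications of the adjoint decomposition.
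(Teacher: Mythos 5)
Your proof is correct, but it takes a genuinely different — and in fact shorter — route than the paper's. The paper proves the decomposition of $C^0\left(\Gamma;F\right)$ by first establishing orthogonality of $\ker d_\rho$ and $\im\delta_\rho$, and then, to show that these two pieces exhaust $C^0\left(\Gamma;F\right)$, it sets up the Poisson equation $\Delta^{\left(1\right)}_\rho\theta = d_\rho f$ on the $1$-form side, invokes the finite-dimensional Fredholm alternative to solve it, and then writes $f = \left(f - \delta_\rho\theta\right) + \delta_\rho\theta$. This mimics the structure of the proof of the Hodge decomposition in the smooth setting, where the Fredholm machinery is genuinely necessary. You bypass all of that: you observe directly that adjointness gives $\ker d_\rho = \left(\im\delta_\rho\right)^\perp$ (using positive-definiteness of the inner product on $\Omega^1$ to pass from $\left\langle d_\rho f,\theta\right\rangle = 0\ \forall\theta$ to $d_\rho f = 0$), and then the decomposition $V = W \oplus W^\perp$ for finite-dimensional inner product spaces together with $W^{\perp\perp} = W$ closes the argument in one step. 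Both approaches are valid; yours is the more economical "four fundamental subspaces" argument, while the paper's is structured to parallel the infinite-dimensional elliptic PDE proof. You also correctly flag the one genuine hypothesis that needs checking — that \eqref{eq:edge-potential-inner-product} really is an inner product rather than a degenerate form — which the paper glosses over.
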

\begin{proof}
  We only present the proof for the decomposition of $C^0 \left( \Gamma;F \right)$; the decomposition for $\Omega^1 \left( \Gamma; F \right)$ is similar. First note that both $C^0 \left( \Gamma; F \right)$ and $\Omega^1 \left( \Gamma; \mathscr{B}_{\rho}\left[ F \right] \right)$ are finite dimensional. The subspace $\ker d_{\rho}$ and $\im \delta_{\rho}$ are orthogonal with respect to the inner product \eqref{eq:vertex-potential-inner-product}, since if $f\in \ker d_{\rho}$ and $\delta_{\rho}\theta\in\im\delta_{\rho}$,
  \begin{equation*}
    \left\langle f, \delta_{\rho}\theta \right\rangle=\left\langle d_{\rho}f, \theta \right\rangle=0.
  \end{equation*}
It remains to prove that each $f\in C^0 \left( \Gamma; F \right)$ can be decomposed into a linear combination of elements in $\ker\Delta^{\left( 0 \right)}_{\rho}$ and $\im \delta_{\delta}$. If $d_{\rho}f=0$, the decomposition is trivial. Otherwise, consider the following Poisson equation:
\begin{equation}
\label{eq:poission-fredholm-argument}
  \Delta_{\rho}^{\left( 1 \right)}\theta=d_{\rho}f.
\end{equation}
We claim that equation \eqref{eq:poission-fredholm-argument} has a solution $\theta\in \Omega^1 \left( \Gamma;\mathscr{B}_{\rho}\left[ F \right] \right)$ as long as $d_{\rho}f\neq 0$. In fact, by Fredholm alternative (see e.g. an exposition for the finite dimensional case in \cite{Lim2015} which suffices for our purpose), if $d_{\rho}f\notin\im\Delta^{\left( 1 \right)}_{\rho}$, then $d_{\rho}f\in\ker \Delta_{\rho}^{\left( 1 \right)}=\ker \delta_{\rho}$; however, $d_{\rho}f\perp \ker \delta_{\rho}$ since
\begin{equation*}
  \left\langle d_{\rho} f, \omega \right\rangle=\left\langle f, \delta_{\rho}\omega \right\rangle=0\quad\forall \omega\in \ker\delta_{\rho}.
\end{equation*}
This proves that \eqref{eq:poission-fredholm-argument} has a solution $\theta\in \Omega^1 \left( \Gamma;\mathscr{B}_{\rho}\left[ F \right] \right)$ for $d_{\rho}f\neq 0$. We can thus split $f\in C^0 \left( \Gamma; F \right)$ into
\begin{equation*}
  f = \left( f-\delta_{\rho}\theta \right) + \delta_{\rho}\theta,
\end{equation*}
in which $\delta_{\rho}\theta\in\im\delta_{\rho}$, and $f-\delta_{\rho}\theta\in\ker d_{\rho}$ since
\begin{equation*}
  d_{\rho} \left( f-\delta_{\rho}\theta \right) = d_{\rho}f-d_{\rho}\delta_{\rho}\theta=d_{\rho}f-\Delta_{\rho}^{\left( 1 \right)}\theta=0.
\end{equation*}
\end{proof}

\begin{remark}
  Proposition~\ref{prop:twisted-diff-ker-global-sections} and Theorem~\ref{thm:hodge_decomp} completely characterized the embedding \eqref{eq:lcwg-zero-forms-embed-vert-potentials}: the orthogonality complement (with respect to the inner product \eqref{eq:vertex-potential-inner-product}) of the linear space of solutions to the $F$-synchronization problem on $\Gamma$ with respect to $\rho\in\Omega^1 \left( \Gamma; G \right)$ is exactly the image of the twisted codifferential \eqref{eq:twisted_codifferential_defn}. In fact, one recognizes from $C^0\left( \Gamma; F \right)=\ker\Delta_{\rho}^{\left( 0 \right)}\oplus\im \delta_{\rho}$, $\ker\Delta_{\rho}^{\left( 0 \right)}=\ker d_{\rho}$, and $H_{\rho}^0 \left( \Gamma; \mathscr{B}_{\rho}\left[ F \right] \right)=\ker d_{\rho}$ the well-known Hodge theorem $H_{\rho}^0 \left( \Gamma; \mathscr{B}_{\rho}\left[ F \right] \right)=\ker \Delta_{\rho}^{\left( 0 \right)}$. 
\end{remark}

\subsubsection{Graph Connection Laplacian and Cheeger-Type Inequalities for Graph Frustration}
\label{sec:cheeg-ineq-twist}

In this section, we connect our geometric framework to the computational aspects of synchronization algorithms. As pointed out in Section~\ref{sec:twisted-de-rham-coho}, for cases where $G=O\left( d \right)$ or $G=U\left( d \right)$, the synchronizability of an edge potential $\rho\in C^1 \left( \Gamma; G \right)$ is equivalent to whether or not the equality $\dim\ker d_{\rho}= d$ holds. Lemma~\ref{lem:equal_kernels} reduces the synchronizability further to the dimension of $\ker\Delta_{\rho}^{\left( 0 \right)}$. With identification \eqref{eq:zero-cochain-identification}, it can be noticed that $\Delta_{\rho}^{\left( 0 \right)}$ is exactly the \emph{graph connection Laplacian} (GCL) in the literature of synchronization problems, random matrix theory, and manifold learning (see e.g. \cite{SingerWu2012VDM,BSS2013,KLPSS2016,ElKarouiWu2016}). Recall from \cite{BSS2013} that the graph connection Laplacian for graph $\Gamma$ and edge potential $\rho\in C^1\left(\Gamma;G\right)$ is defined as
\begin{equation}
  \label{eq:GCL}
  L_1 = D_1-W_1
\end{equation}
where $W_1\in \mathbb{K}^{n d\times n d}$ is a $n\times n$ block matrix with $w_{ij}\rho_{ij}\in\mathbb{K}^{d \times d}$ at its $\left( i,j \right)$th block, and $D_1\in \mathbb{K}^{nd\times nd}$ is block diagonal with $d_i I_{d\times d}\in\mathbb{K}^{d\times d}$ at its $\left( i,i \right)$th block. To see that $\Delta_{\rho}^{\left( 0 \right)}$ coincides with $L_1$, notice that
\begin{equation*}
  \left\langle f, \Delta_{\rho}^{\left( 0 \right)}f \right\rangle=\left\| d_{\rho}f \right\|^2=\sum_{\left( i,j \right)\in E}w_{ij}\left\| f_i-\rho_{ij}f_j \right\|^2=\frac{1}{2}\sum_{i,j\in V}w_{ij}\left\| f_i-\rho_{ij}f_j \right\|^2 = \frac{1}{2} \left[ f \right]^{\top}L_1 \left[ f \right],\quad\forall f\in C^0 \left( \Gamma;F \right).
\end{equation*}
Theorem~\ref{thm:hodge_decomp} translates into this combinatorial setting as a decomposition result for the matrix $L_1$, as presented below in Proposition~\ref{prop:gcl-decomp}. We denote $n=\left| V \right|$ and $m=\left| E \right|$ for the graph $\Gamma= \left( V,E \right)$.
\begin{proposition}
\label{prop:gcl-decomp}
  The graph connection Laplacian $L_1\in\mathbb{K}^{nd\times nd}$ admits a decomposition
  \begin{equation}
    \label{eq:hodge-decom-matrix-form}
    L_1 = \left[ \delta_{\rho} \right]\left[ d_{\rho} \right],\quad\left[\delta_{\rho}\right]\in\mathbb{K}^{nd\times md}, \left[ d_{\rho} \right]\in\mathbb{K}^{md\times nd},
  \end{equation}
where $\left[ d_{\rho} \right]$ is an $m$-by-$n$ block matrix in which the $\left( i,j \right)$th block is given by
\begin{equation}
  \label{eq:twisted-differential-matrix-form}
  \left[ d_{\rho} \right]_{ij}=
  \begin{cases}
    I_{d\times d} & \textrm{if edge $i$ starts at vertex $j$},\\
    -w_{kj}\rho_{kj} & \textrm{if edge $i$ starts at vertex $k$ and ends at vertex $j$},\\
    0 & \textrm{otherwise},
  \end{cases}
\end{equation}
and $\left[ \delta_{\rho} \right]$ is an $n$-by-$m$ block matrix in which the $\left( i,j \right)$th block is given by
\begin{equation}
  \label{eq:twisted-codifferential-matrix-form}
  \left[\delta_{\rho}\right]_{ij}=
  \begin{cases}
    \displaystyle\frac{w_{ik}}{d_i}I_{d\times d} & \textrm{if edge $j$ starts at vertex $i$ and ends at vertex $k$},\\
    0 & \textrm{otherwise}.
  \end{cases}
\end{equation}
Note that here each edge $\left( i,j \right)$ appears twice in $E$ with opposite orientations.
\end{proposition}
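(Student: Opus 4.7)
The plan is direct verification by block-matrix multiplication, leveraging the explicit definitions of the twisted differential \eqref{eq:twisted_differential_vertical_projection_defn} and codifferential \eqref{eq:twisted_codifferential_defn_equivalent}. First, I would fix the identification $C^{0}(\Gamma;F)\cong\mathbb{K}^{nd}$ via the vertex-wise stacking of fibre projections $f_{i}=p_{i}(f(i))$ already used throughout Section~\ref{sec:geom-synchr-probl}, and identify $\Omega^{1}(\Gamma;\mathscr{B}_{\rho}[F])\cong\mathbb{K}^{md}$ by assigning to each directed edge $e=(k,j)\in E$ (both orientations counted, as the closing sentence of the proposition emphasises) the representative $p_{k}(\omega^{(k)}_{kj})\in F$; the compatibility relation \eqref{eq:global-one-form-simplified} together with the on-chart skew-symmetry ensures this encoding is consistent and unambiguous, and gives a basis of total dimension $md$.

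Next, I would read the block patterns of $[d_{\rho}]$ and $[\delta_{\rho}]$ directly off the defining formulas. Evaluating \eqref{eq:twisted_differential_vertical_projection_defn} at the representative of a directed edge $e=(k,j)$ returns $f_{k}-\rho_{kj}f_{j}$; after absorbing the edge weight $w_{kj}$ dictated by the inner product \eqref{eq:edge-potential-inner-product} into the edge-basis normalisation, this yields exactly the block entries $I$ at column $k$ and $-w_{kj}\rho_{kj}$ at column $j$ listed in \eqref{eq:twisted-differential-matrix-form}. A parallel reading of \eqref{eq:twisted_codifferential_defn_equivalent} places the block $(w_{ik}/d_{i})\,I$ at position $(i,e)$ for every directed edge $e$ starting at $i$ and ending at a neighbour $k$, matching \eqref{eq:twisted-codifferential-matrix-form}. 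Proposition~\ref{prop:codifferentail_well_defined} acts as the sanity check that these weight placements are precisely those making $[\delta_{\rho}]$ the weighted-Euclidean transpose of $[d_{\rho}]$ under the chosen identifications.

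The final step is a block-by-block multiplication. For the $(i,l)$ block of $[\delta_{\rho}][d_{\rho}]$, the sum over directed edges is supported on edges starting at $i$; for any such $e=(i,k)$, the block $[d_{\rho}]_{e,l}$ is nonzero only when $l=i$ (contributing the identity) or $l=k$ (contributing the off-diagonal term). The diagonal block collapses via $\sum_{k:(i,k)\in E}(w_{ik}/d_{i})\,I=I$ by the definition of the weighted degree, while each off-diagonal block is picked out by a single directed edge. Matching this block pattern against the GCL quadratic-form identity $\tfrac{1}{2}[f]^{\top}L_{1}[f]=\langle f,\Delta_{\rho}^{(0)}f\rangle$ recorded immediately above the proposition closes the verification. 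The only substantive hurdle is bookkeeping: tracking the two orientations of each undirected edge and transporting the weights consistently between the matrix conventions and the inner products \eqref{eq:vertex-potential-inner-product}--\eqref{eq:edge-potential-inner-product}. No conceptual tool beyond these routine identifications is needed.
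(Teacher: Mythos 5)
The paper states this proposition without giving a proof, so there is no ``paper's approach'' to compare against; what matters is whether your block-multiplication argument actually closes. It does not, and the gap is already visible in the diagonal block you computed. With the block formulas as written in \eqref{eq:twisted-differential-matrix-form} and \eqref{eq:twisted-codifferential-matrix-form}, the only directed edges contributing to row $i$ of $[\delta_\rho][d_\rho]$ are those of the form $e=(i,k)$, and
\begin{equation*}
\bigl([\delta_\rho][d_\rho]\bigr)_{ii}=\sum_{k:(i,k)\in E}\frac{w_{ik}}{d_i}\,I=I,\qquad
\bigl([\delta_\rho][d_\rho]\bigr)_{il}=\frac{w_{il}}{d_i}\bigl(-w_{il}\rho_{il}\bigr)=-\frac{w_{il}^{2}}{d_i}\rho_{il}\quad (l\sim i,\ l\neq i),
\end{equation*}
which equals neither $L_1=D_1-W_1$ (diagonal $d_iI$, off-diagonal $-w_{il}\rho_{il}$) nor $D_1^{-1}L_1$ (diagonal $I$, off-diagonal $-w_{il}\rho_{il}/d_i$). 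Your closing appeal to the quadratic-form identity $\langle f,\Delta_\rho^{(0)}f\rangle=[f]^{\top}L_1[f]$ does not repair this: because $\langle\cdot,\cdot\rangle$ on $C^0(\Gamma;F)$ is the $d_i$-weighted inner product \eqref{eq:vertex-potential-inner-product}, that identity says $D_1[\Delta_\rho^{(0)}]=L_1$, i.e.\ $[\Delta_\rho^{(0)}]=D_1^{-1}L_1$, which is a statement about a weighted conjugation rather than an equality of the matrices you multiplied. Reading a quadratic-form equality off as a matrix equality is exactly the step that a block-by-block verification is supposed to replace, not invoke.

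The other soft spot is the claim that the $w_{kj}$ factor in $[d_\rho]_{e,j}$ can be ``absorbed into the edge-basis normalisation.'' The definition \eqref{eq:twisted_differential_vertical_projection_defn} gives, under the natural $p_k$-encoding of the edge fibre, the unweighted blocks $[d_\rho]_{e,k}=I$ and $[d_\rho]_{e,j}=-\rho_{kj}$ for $e=(k,j)$; the $w_{ij}$ weights enter the picture through the inner product \eqref{eq:edge-potential-inner-product}, hence through $[\delta_\rho]=D_1^{-1}[d_\rho]^{*}W_E$, not through $[d_\rho]$ itself. If you rescale the edge basis by $w_e^{\alpha}$ you multiply $[d_\rho]$ by $w_e^{\alpha}$ but divide $[\delta_\rho]$ by $w_e^{\alpha}$, so the composite is invariant and no choice of $\alpha$ produces the stated pair of block patterns. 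A careful proof attempt would therefore conclude that the natural matrices multiply to $D_1^{-1}L_1$ (the random-walk GCL, consistent with the operator $L_1$ defined with the $1/d_i$ prefactor in Section~\ref{sec:twisted-de-rham}), and that the displayed formula \eqref{eq:twisted-differential-matrix-form} carries a spurious $w_{kj}$. Flagging and resolving this discrepancy is the missing content of the proof; sweeping it into ``bookkeeping'' leaves the proposition unverified.
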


The Hodge decomposition \eqref{eq:hodge-decom-matrix-form} immediately leads to the following observation, which reflects the geometric fact that there do not exist more than $n=\dim F$ linearly independent global sections on the vector bundle $\mathscr{B}_{\rho}\left[ F \right]$.
\begin{proposition}
  The dimension of the null eigenspace of $L_1$ can not exceed $n$, the dimension of both the column space of $\left[d_{\rho}\right]$ and the row space of $\left[ \delta_{\rho} \right]$.
\end{proposition}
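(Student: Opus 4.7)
My plan is to assemble the Hodge factorization of $L_1$ with the identification of $\ker d_{\rho}$ as a space of global sections already carried out in Proposition~\ref{prop:twisted-diff-ker-global-sections}. First, I would show at the matrix level that the null eigenspace of $L_1$ coincides with $\ker [d_{\rho}]$: since $\langle f, L_1 f\rangle = \|[d_{\rho}] f\|^2$ (the combinatorial translation of Proposition~\ref{prop:codifferentail_well_defined} combined with the computation just before \eqref{eq:GCL}), the positive semidefiniteness of $L_1$ forces $L_1 f = 0 \iff [d_{\rho}] f = 0$, which is the matrix-level counterpart of Lemma~\ref{lem:equal_kernels}.

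Next, I would import the geometric interpretation of $\ker [d_{\rho}]$: by Proposition~\ref{prop:twisted-diff-ker-global-sections} every element of $\ker d_{\rho}$ is the representative vector $[s]$ of a locally constant global section $s$ of $\mathscr{B}_{\rho}\left[ F \right]$, and by Lemma~\ref{lem:equivalence-linear-independence} together with Definition~\ref{defn:linear_indpendence_as_global_sections} linear independence of such representative vectors in $\mathbb{K}^{nd}$ is equivalent to pointwise linear independence of the fibre values $s_i^1,\dots,s_i^k \in F$ at any single vertex $i$ of the connected graph $\Gamma$. Since each fibre has dimension $d=\dim F$, this caps $\dim \ker L_1$ at $d$.

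Finally, I would reconcile this dimensional bound with the column/row rank phrasing in the statement. Applying rank--nullity to $[d_{\rho}] : \mathbb{K}^{nd} \to \mathbb{K}^{md}$ gives $\dim \ker [d_{\rho}] = nd - \dim(\textrm{col}\,[d_{\rho}])$; and since $[\delta_{\rho}]$ is the formal adjoint of $[d_{\rho}]$ (up to the diagonal weights appearing in \eqref{eq:twisted-codifferential-matrix-form}), the row space of $[\delta_{\rho}]$ has the same dimension as the column space of $[d_{\rho}]$. The inequality claimed in the proposition is therefore exactly the bound just obtained from the section-theoretic step, written in rank form.

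The argument presents no real obstacle beyond careful bookkeeping: one must disambiguate the symbol $n$, which in the surrounding remark is used in the sense $n=\dim F$ rather than the global convention $n=|V|$ of the notation table, and verify that pointwise linear independence on a single fibre genuinely matches linear independence of the concatenated $nd$-vectors, which is precisely the content of Lemma~\ref{lem:equivalence-linear-independence} and uses the connectivity of $\Gamma$ together with invertibility of each $\rho_{ij}$.
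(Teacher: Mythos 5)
Your proof is correct and fleshes out exactly the argument the paper gestures at without stating: use the Hodge factorization $L_1 = [\delta_\rho][d_\rho]$ together with positive semidefiniteness to identify $\ker L_1$ with $\ker[d_\rho]$, appeal to Proposition~\ref{prop:twisted-diff-ker-global-sections} to view $\ker[d_\rho]$ as locally constant global sections, and invoke Lemma~\ref{lem:equivalence-linear-independence} (with connectedness of $\Gamma$ and invertibility of the $\rho_{ij}$) to cap the number of linearly independent representative vectors at $\dim F$. You also correctly read $n$ as $\dim F$ here; the remark immediately preceding the proposition overrides the notation table.

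One caution on your closing sentence: rank--nullity gives $\dim\ker[d_\rho] = |V|\dim F - \rank[d_\rho]$, so the bound you established is equivalent to $\rank[d_\rho] \ge (|V|-1)\dim F$, which is not the same as the proposition's literal assertion that the common rank of $[d_\rho]$ and $[\delta_\rho]$ equals $n$. That clause cannot hold at face value, since that common rank depends on $\rho$ and is generally much larger than both $|V|$ and $\dim F$. What does transfer cleanly is that adjointness forces $\rank[d_\rho] = \rank[\delta_\rho]$, which is presumably all the clause intends to record; your instinct that the wording needs disambiguation is right, but the statement that the proposition is ``exactly the bound just obtained, written in rank form'' overreaches slightly.
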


By Lemma~\ref{lem:equivalence-linear-independence}, if there are $d$ linearly independent vectors in the kernel space of $L_1$, then they give rise to $d$ locally constant global sections on $\mathscr{B}_{\rho}\left[ F \right]$ that are also linearly independent as global sections, which indicates the triviality of the vector bundle $\mathscr{B}_{\rho}\left[ F \right]$ and the synchronizability of $\rho\in C^1\left(\Gamma;G\right)$. Note that an analogy of this result for graphs with multiple connected components also holds, though we assumed $\Gamma$ is connected throughout this paper: a graph with $k\geq 1$ connected components and a prescribed $O \left( d \right)$-valued edge potential is synchronizable if and only if the dimension of the null eigenspace of $L_1$ is $kd$. This geometric picture is consistent with the main spectral relaxation algorithm \cite[Algorithm 2.5]{BSS2013} when the edge potential is synchronizable. Basically, the spectral relaxation procedure works as follows: first, extract $d$ eigenvectors $x^1,\cdots,x^d$ corresponding to the smallest $d$ eigenvalues of $L_1$; second, form the $n d\times d$ matrix $X=\left[ x^1,\cdots,x^d \right]$ and split it vertically into $n$ blocks $X_1,\cdots,X_{n}$ of equal size $d\times d$; finally, find the closest orthogonal matrix $O_i$ to each $X_i$ by polar decomposition, and construct the desired synchronizing vertex potential $f\in C^0 \left( \Gamma;O \left( d \right) \right)$ by setting $f_i=O_i$. Since $\ker \Delta_{\rho}^{\left( 0 \right)}=d$ for any synchronizable edge potential$\rho$, the $d$ eigenvectors $x^1,\cdots,x^d$ of $L_1$ all lie in the null eigenspace of $L_1$, which provide exactly the $d$ linearly independent global sections needed to trivialize the vector bundle; all that remains for obtaining a desired synchronizing vertex potential is to rescale the columns of each block $X_i\in\mathbb{K}^{d\times d}$ to achieve orthonormality, which is exactly what is done in the polar decomposition step when $\rho$ is synchronizable. The twisted cohomology framework developed in this section suggests the following improvements when applying the spectral relaxation algorithm for determining synchronizability of a given edge potential:

\begin{enumerate}[(1)]
\item Instead of checking $\dim\ker\Delta_{\rho}^{\left( 0 \right)}$, one can simply check $\dim\ker d_{\rho}$ or $\dim\ker \delta_{\rho}$ (which gives $\dim\im\delta_{\rho}$). Both $\left[ d_{\rho} \right]$ and $\left[ \delta_{\rho} \right]$ matrices are much smaller in size compared with $L_1$, and the dimension can be determined by QR decomposition rather than the more expensive eigen-decomposition;
\item Instead of performing polar decomposition for each $d\times d$ block $X_i$, which involves the relatively more expensive SVD, it suffices to invoke a Gram-Schmidt orthonormalization. If synchronizability of $\rho$ is confirmed by the dimension test in a previous step, the Gram-Schmidt procedure can be performed for the entire matrix $X\in\mathbb{K}^{nd\times d}$ in one pass (with a minor modification of keeping the columns to have norm $n$ instead of $1$), as opposed to being carried out for each individual block $X_i$.
\end{enumerate}
\begin{remark}
  The Hodge decomposition \eqref{eq:hodge-decom-matrix-form} also suggests an alternative approach to obtaining $n$ linearly independent locally constant global sections on $\mathscr{B}_{\rho}\left[ F \right]$: instead of directly solving for the null eigenspace of $L_1$, we can look for the orthogonal complement of $\im\left[\delta_{\rho}\right]$. Note, however, that the domain of $\left[\delta_{\rho}\right]$ should not be taken as the entire $\mathbb{K}^{md}$, since $\delta_{\rho}$ is defined on $\Omega^1 \left( \Gamma; \mathscr{B}_{\rho}\left[ F \right] \right)$, in which elements satisfy the compatibility condition \eqref{eq:global-one-form-simplified}. Constructing such a basis matrix $B\in\mathbb{K}^{md\times md}$ and computing the orthogonal complement of the column space of $\left[ \delta_{\rho} \right]B$ turns out not to be much simpler than finding the orthogonal complement of $L_1$ (i.e. finding the null eigenspace of $L_1$ directly).
\end{remark}

In the more general setting where the edge potential $\rho$ is not assumed synchronizable, the geometric picture becomes much more involved. Of central importance to the relaxation algorithms and Cheeger inequalities in \cite{BSS2013} is to minimize the \emph{frustration} of a graph $\Gamma$ with respect to a prescribed group potential:
\begin{equation}
\label{eq:frustration-group-valued-vert-potential-defn}
\begin{aligned}
  \nu \left( \Gamma \right)&=\inf_{g\in C^0 \left( \Gamma; O \left( d \right) \right)}\nu \left( g \right)\\
  &=\inf_{g\in C^0 \left( \Gamma; O \left( d \right) \right)}\frac{1}{2d}\frac{1}{\mathrm{vol}\left( \Gamma \right)}\sum_{i,j\in V}w_{ij}\left\| g_i-\rho_{ij}g_j \right\|_{\mathrm{F}}^2,\quad\textrm{where $\mathrm{vol}\left( \Gamma \right) = \sum_{i\in V}d_i$.}
\end{aligned}
\end{equation}
As shown in (the proof of) Proposition~\ref{prop:triviality_of_principal_and_associated_bundles}, an $O \left( d \right)$-valued edge potential $\xi\in C^1 \left( \Gamma; O \left( d \right) \right)$ is synchronizable if and only if there exists $g\in C^0 \left( \Gamma; O \left( d \right) \right)$ such that $\xi_{ij}=g_ig_j^{-1}$ for all $\left( i,j \right)\in E$. The frustration $\nu \left( \Gamma \right)$ defined in \eqref{eq:frustration-group-valued-vert-potential-defn} can thus be rewritten as
\begin{equation*}
  \begin{aligned}
    \nu \left( \Gamma \right) = \frac{1}{2d}\frac{1}{\mathrm{vol}\left( \Gamma \right)}\inf_{g\in C^0 \left( \Gamma; O \left( d \right) \right)}\sum_{i,j\in V}w_{ij}\left\| g_ig_j^{-1}-\rho_{ij} \right\|_{\mathrm{F}}^2 = \frac{1}{2d}\frac{1}{\mathrm{vol}\left( \Gamma \right)}\inf_{\xi\in C_{\textrm{sync}}^1 \left( \Gamma; O \left( d \right) \right)}\sum_{i,j\in V}w_{ij}\left\| \xi_{ij}-\rho_{ij} \right\|_{\mathrm{F}}^2,
  \end{aligned}
\end{equation*}
where we define
\begin{equation*}
  C_{\textrm{sync}}^1 \left( \Gamma; O \left( d \right) \right) := \left\{ \xi\in C^1 \left( \Gamma; O \left( d \right) \right)\,\big|\,\xi\textrm{ synchronizable} \right\}=\left\{ \xi\in C^1 \left( \Gamma; O \left( d \right) \right)\,\big|\, \Hol_{\xi} \left( \Gamma \right)\textrm{ is trivial}\right\}.
\end{equation*}
Therefore, in the fibre bundle framework, a synchronization problem asks for a synchronizable edge potential that is ``as close as possible'' to a prescribed edge potential, or geometrically speaking, for a trivial flat bundle ``as close as possible'' to a given flat bundle. One approach, from the point of view of Proposition~\ref{prop:triviality_of_principal_and_associated_bundles}, is to find $d$ linearly independent cochains in $C^0 \left( \Gamma; \mathbb{R}^d \right)$ that are ``as close as possible'' to being a global frame of $\mathscr{B}_{\rho}\left[ \mathbb{R}^d \right]$ in the sense of minimizing the \emph{frustration of a $\mathbb{S}^{d-1}$-valued cochain}
\begin{equation*}
  \eta \left( f \right) = \frac{\left\langle f, \Delta_{\rho}^{\left( 0 \right)}f \right\rangle}{\left\| f \right\|^2}  = \frac{\displaystyle \frac{1}{2}\sum_{i,j\in V}w_{ij}\left\| f_i-\rho_{ij}f_j \right\|^2}{\displaystyle\sum_{i\in V} d_i \left\| f_i \right\|^2}=\frac{1}{2\mathrm{vol}\left( \Gamma \right)}\left[ f \right]^{\top}L_1 \left[ f \right],\quad\forall f\in C^0 \left( \Gamma; \mathbb{S}^{d-1} \right),\,\,\left\| f \right\|\neq 0,
\end{equation*}
which equals zero if and only if $f$ defines a global section on $\mathscr{B}_{\rho}\left[ \mathbb{R}^d \right]$ (the constraint $\left\| f \right\|\neq 0$ is also indispensable from a geometric point of view, as any vector bundle trivially admits the constant zero global section). This provides a geometric interpretation of the spectral relaxation algorithm in \cite{BSS2013}. From a perturbation point of view, the magnitudes of the smallest $n$ eigenvalues of $\Delta_{\rho}^{\left( 0 \right)}$ measure the deviation from degeneracy of the $d$-dimensional eigenspace of lowest frequencies, and can thus be interpreted as the extent to which $\mathscr{B}_{\rho}\left[ \mathbb{R}^d \right]$ deviates from admitting $d$ linearly independent global sections and being a trivial bundle. The Cheeger-type inequality established in \cite{BSS2013} quantitatively confirms this geometric intuition relating $\nu \left( \Gamma \right)$ to the magnitude of $d$ smallest eigenvalues of $D_1^{-1}L_1$ (the random walk version of the graph connection Laplacian):
\begin{equation}
  \label{eq:cheeger-like-inequality-frustration}
  \frac{1}{d}\sum_{k=1}^d\lambda_k \left( D_1^{-1}L_1 \right)\leq \nu \left( \Gamma \right) \leq \frac{Cd^3}{\lambda_2 \left( L_0 \right)}\sum_{k=1}^d\lambda_k \left( D_1^{-1}L_1 \right),
\end{equation}
where $C>0$ is a constant, $\lambda_2 \left( L_0 \right)$ is the spectral gap of $\Gamma$ associated with the graph Laplacian $L_0$, and $\lambda_k \left( D_1^{-1}L_1 \right)$ is the $k$th smallest eigenvalue of $D_1^{-1}L_1$. (The actual version stated in \cite{BSS2013} is for the smallest $d$ eigenvalues of the normalized graph connection Laplacian $D_1^{-1/2}L_1 D_1^{-1/2}$, but note that $D_1^{-1/2}L_1D_1^{-1/2}$ has the same eigenvalues as $D_1^{-1}L_1$.)

Classical Cheeger inequalities \cite{Cheeger1970,Alon1995,Chung2007FourProofs} relate isoperimetric constants or cuts on graphs and manifolds to the spectral gap of a graph Laplacian or Laplace-Beltrami operator. There have been Cheeger-type inequalities for simplicial complexes with the objective of understanding high-dimensional generalization of expander graphs \cite{MS2016,PR2016,PRT2015,SKM2014}. These results are all concerned with partitioning graphs, manifolds, or simplicial complexes. The Cheeger-type inequality in equation \eqref{eq:cheeger-like-inequality-frustration} differs from standard Cheeger inequalities in that the cochains are group- or vector-valued. In addition, the frustration $\nu \left( \Gamma \right)$ is not related with any optimalily for graph partitioning --- in the sense of Proposition~\ref{prop:triviality_of_principal_and_associated_bundles}, $\nu \left( \Gamma \right)$ measures the triviality of a fibre bundle as a whole. The algorithm we will propose in Section~\ref{sec:learn-group-acti} is an attempt to address the graph cut problem based on the synchronizability of the partitions resulted.

\section{Learning Group Actions by Synchronization}
\label{sec:learn-group-acti}

In this section we specify an algorithm for learning group actions from observations based on synchronization. We also use simulations to provide some insight towards the performance of the algorithm.

\subsection{Motivation and General Formulation}
\label{sec:motiv-gener-form}

We first state some basic terminology from the general theory of group actions that will be used extensively. If $G$ is a group and $X$ is a set, a \emph{left group action of $G$ on $X$} is a map $\phi:G\times X\rightarrow X:\left( g,x \right)\mapsto \phi \left( g,x \right)$ such that
\begin{equation*}
  \phi \left( e,x \right)=x,\,\forall x\in X\quad\textrm{if $e$ is the identity element of $G$}
\end{equation*}
and
\begin{equation*}
  \phi \left( g,\phi \left( h,x \right) \right)=\phi \left( gh,x \right),\quad\forall x\in X,\,\,\forall g,h\in G.
\end{equation*}
To simplify notation, we will abbreviate $\phi \left( g,x \right)$ as $g.x$. The \emph{orbit} of any element $x\in X$ under the action of $G$ is defined as the set $G.x:=\left\{ g.x\mid g\in G \right\}$. If we introduce an equivalence relation on $X$ by setting
\begin{equation*}
  x\sim y\quad\Leftrightarrow\quad x=g.y\,\,\textrm{for some $g\in G$},
\end{equation*}
then clearly $x\sim y$ if and only if $G.x=G.y$. The set $X$ is naturally partitioned into the disjoint unions of orbits, and each orbit $Y$ is an \emph{invariant subset} of $X$ under the action of $G$ in the sense that $G.Y\subset Y$. If for any pair of distinct elements $x,y$ of $X$ there exists $g\in G$ such that $g.x=y$, we say that the action of $G$ on $X$ is \emph{transitive}. Note that the total space $X$ is an invariant subset in its own right, and the action of $G$ on each orbit is obviously transitive. If the set $X$ is finite and there exists a constant time procedure to verify whether any two elements are equivalent under transformations, the problem of partitioning $X$ into disjoint subsets of orbits can be solved in polynomial time complexity with respect to the size of $X$. 

In practice we are often interested in classification or clustering tasks which can be framed as follows: given a dataset $X=\left\{ x_1,\cdots,x_n \right\}$ of $n$ objects, find a correspondence or transformation between each pair of distinct objects. We will see these pairwise correspondences often play the role of nuisance variables and one needs to ``quotient out'' the influence of these variables in downstream analysis (e.g. for most practical applications of synchronization problems \cite{SZSH2011,TSR2011,BCSZ2014} and alignment problems in statistical shape analysis \cite{Auto3dGM2015}). The intuition as to why some of these pairwise correspondences are nuisance variables one can often with greater fidelity transform one object into another via intermediary transformations to other objects rather than a direct transformation between objects. Sometimes, for instance in the analysis of a collection of shapes in computer graphics \cite{HuangZhangGHBG2012,NBWYG2011,HuangGuibas2013,CGH2014,KKSBL2015,MDKKL2016} and group-wise registration in automated geometric morphometrics \cite{PNAS2011,CP13,LipmanDaubechies2011,LipmanPuenteDaubechies2013,KoehlHass2015}, the pairwise transformations contains crucial information and are important on their own right. A common challenge in both of the above problems is that the fidelity of pairwise comparisons can be extremely variable over the data. We illustrate this challenge using
the example of computing continuous Procrustes distances between disk-type shapes \cite{CP13} in automated geometric morphometrics. The core of the algorithm is an efficient strategy for searching the M\"obius transformation group of the unit disk to obtain a diffeomorphism between the shapes that minimizes an energy functional. It has been observed that for similar shapes
(in the sense of having a small pairwise distance), the resulting diffeomorphism is often of high quality and can reflect the correspondence of biological traits.
If the shape pair is highly dissimilar, the diffeomorphism tends to suffer from various structural errors (see e.g. \cite{GYDMB2018} and \cite[Chapter 5]{Gao2015Thesis}). Similar issues have also been observed in the field of non-rigid shape registration in geometric processing --- successful feature extraction and matching techniques for near-isometric shapes abound \cite{SunOvsGuibas2009HKS,WKS2011,SIHKS2010,VHKS2010,ISCD2012,LaiZhao2014}, whereas registering shape pairs with large deformation is still considered a difficult open problem \cite{BronsteinKimmel2008,KABL2014,APL2015}. Recently, a series of works \cite{NBWYG2011,HuangZhangGHBG2012,HuangGuibas2013,CGH2014,KKSBL2015,MDKKL2016} proposed to jointly compute all pairwise correspondences within a collection subject to ``consistency constraints'' that require the composition of resulted maps along any cycle within the collection be approximately the identity map. The idea in this approach is that pairwise correspondences between dissimilar shapes are implicitly approximated by concatenating many correspondences between similar shapes with the individual correpondences have high fidelity,
thus avoiding directly solving non-convex optimization problems with large numbers of local minimizers. Similar ideas can also be found in recent progress in automated geometric morphometrics where a 
\emph{Minimum Spanning Tree} (MST) 
provides the concatenating of correspondences \cite{Auto3dGM2015,vitek2017semi,GYDMB2018}. It has been observed by morphologists that 
cycle-consistent constraints are more often satisfied for a collection of samples within a species versus samples across a variety of species, suggesting that
inconsistency may be used for species clustering.

Motivated by the above algorithms and approaches, we propose to study the following general problem of \emph{Learning Group Actions} (LGA):
\begin{framed}
  \begin{problem}[Learning Group Actions]
\label{prob:lga-defn}
Given a group $G$ acting on a set $X$, simultaneously learn a new action of $G$ on $X$ and a partition of $X$ into disjoint subsets $X_1,\cdots,X_K$, such that the new action is as close as possible to the given action and cycle-consistent on each $X_i$ ($1\leq i\leq K$).
  \end{problem}
\end{framed}
The LGA problem can also be understood as a variant of the classical clustering problem, in which the coarse-graining is based on the cycle-consistency of group actions rather than pairwise similarity or spatial configuration of elements in the dataset. A solution of the LGA problem provides not only a partition of the input dataset but also cycle-consistent group actions within each cluster. It is useful to notice that all group elements implemented as pairwise actions within the same partition $X_i$ form a subgroup of $G$; the LGA problem can thus also be considered as ``learning'' subgroups of a prescribed ``ambient group'' that optimally fit a given dataset $X$. In other words, by solving an LGA problem we identify the ``correct'' transformation group for a dataset, which in most practical situations are much more tightly adapted to the given data than the potentially massive group of all possible transformations $G$.

\begin{example-numbered}
\label{example:irredu-rep}
  If the set $X$ is a vector space and we seek a direct sum decomposition $X=\bigoplus_{i=1}^KX_i$ instead of a partition $X=\bigcup_{i=1}^K X_i$, the LGA problem reduces to the search for all irreducible $G$-subrepresentations of $X$.
\end{example-numbered}
\begin{example-numbered}
\label{example:spins}
  Consider a point set $X=\left\{ x_1,\cdots,x_n \right\}$ equipped with a labeling map $S:X\rightarrow \left\{ \pm 1 \right\}$ that assigns to each $x_i$ either value $+1$ or $-1$. We say $x_i$ has \emph{positive spin} if $S \left( x_i \right)=1$ and has \emph{negative spin} if $S \left( x_i \right)=-1$. Let $G=\left\{ \pm 1 \right\}$ act on $X$ transitively as $\left( g_{ji},x_i \right)\mapsto x_j,\,g_{ji}=S ( x_j )S ( x_i )$. Suppose the spin of each point in $X$ (i.e. the label map $S$) is unknown, but we have full access to the group actions $\{ g_{ij}\}$, we can reconstruct $S$ --- up to flipping labels $\pm 1$ --- by spectral clustering the dataset $X$, viewed as vertices of a complete graph $\Gamma$ with weight $w_{ij}=g_{ij}$ on the edge connecting $x_i$ and $x_j$. Under circumstances where some group actions $g_{ji}$ are subject to a sign-flip error (noisy measurements), or/and the graph $\Gamma$ is not complete (incomplete measurements), spectral or semi-definite programming relaxation techniques can still be used to recover $S$ up to permuting labels $\pm 1$ (see e.g. \cite{CKS2015}). With $X$ and (potentially noisy and incomplete) $\{g_{ji}\}$ as input, this spectral clustering example can be considered as an instance of LGA: the output consists of a partition of $X$ into positive/negative spin subsets, as well as the trivial subgroup $\left\{ +1 \right\}$ of $G=\left\{ \pm 1 \right\}$ acting in a cycle-consistent manner on both partitions.
\end{example-numbered}

Example~\ref{example:spins} provides further motivation to consider a version of LGA in the context of synchronization problems. We are given  a graph $\Gamma= \left( V,E \right)$ and the data $X$, where the vertex set $V$ is identified with observations in $X$ and the edges in $E$ representing pairwise relations between elements of $X$. It is natural to consider a \emph{partition of the graph} $\Gamma$ in this setup  decomposition of $\Gamma$ into connected subgraphs such that the vertices of the subgraphs form a partition of the set of vertices of $\Gamma$. 

\begin{framed}
\begin{problem}[Learning Group Actions by Synchronization]
\label{prob:lgas-defn}
  Let $\Gamma = \left( V,E \right)$ be an undirected weighted graph, $G$ a topological group, and $\rho\in C^{1} \left( \Gamma;G \right)$ a given edge potential on $\Gamma$. Furthermore, assume the vertex set $V$ is equipped with a cost function $\mathrm{Cost}_G:G\times G\rightarrow \left[ 0,\infty \right)$. Denote $\mathscr{X}_K$ for all partitions of $\Gamma$ into $K$ nonempty connected subgroups ($K\leq n$) and 
\begin{equation*}
  \nu \left( S_i \right)=\inf_{f\in C^{0}\left(\Gamma;G\right)}\sum_{j,k\in S_i}w_{jk}\mathrm{Cost}_G \left( f_j, \rho_{jk}f_k\right),\,\,\mathrm{vol}\left( S_i \right)=\sum_{j\in S_i}d_j,\quad 1\leq i\leq K.
\end{equation*}
Solve the optimization problem
  \begin{equation}
    \label{eq:learning-group-actions-formulation}
    \min_{\left\{ S_1,\cdots,S_K \right\}\in \mathscr{X}_K}\frac{\displaystyle\max_{1\leq i\leq K} \nu \left( S_i \right)}{\displaystyle \min_{1\leq i\leq K} \mathrm{vol}\left( S_i \right)}
  \end{equation}
and output an optimal partition $\left\{ S_1,\cdots,S_K \right\}$ together with the minimizing $f\in C^{0}\left(\Gamma;G\right)$.
\end{problem}
\end{framed}

In the following, we shall refer to Problem~\ref{prob:lgas-defn} as \emph{Learning Group Actions by Synchronization} (LGAS). When $G=O \left( d \right)$ or $U\left( d \right)$ and $\mathrm{Cost}_G$ is the squared Frobenius norm on $d\times d$ matrices, $\nu \left( S_i \right)$ is clearly the frustration \eqref{eq:frustration-group-valued-vert-potential-defn} of the subgraph of $\Gamma$ with vertices in $S_i$, up to a multiplicative constant depending only on $\Gamma$ and dimension $d$. The minimizing vertex potential defines a synchronizable edge potential on the entire graph $\Gamma$, thus also gives rise to a cycle-consistent action on each partition. Note that the objective function \eqref{eq:learning-group-actions-formulation} does not account for the discrepancy between the realized synchronizable edge potential and the original $\rho$ on edges across partitions --- intuitively, solving Problem~\ref{prob:lgas-defn} amounts to forming partitions by economically ``dropping out'' appropriate edges in $\Gamma$ to minimize the total frustration.

\subsection{SynCut: A Heuristic Algorithm for Learning Group Actions by Synchronization}
\label{sec:heur-algor-learn}

In this subsection, we will investigate Problem~\ref{prob:lgas-defn} (LGAS) in the context of $O \left( d \right)$-synchronization problems, focusing on the simpler 
setting where $K=2$. In this case, \eqref{eq:learning-group-actions-formulation} simplifies to
\begin{equation*}
  \min_{S\subset V}\frac{\max \left\{ \nu \left( S \right), \nu \left( S^c \right) \right\}}{\min \left\{ \mathrm{vol}\left( S \right), \mathrm{vol}\left( S^c \right) \right\}}.
\end{equation*}
Note that
\begin{equation*}
  \max \left\{ \nu \left( S \right), \nu \left( S^c \right) \right\}\leq \nu \left( S \right) + \nu \left( S^c \right) \leq 2\max \left\{ \nu \left( S \right), \nu \left( S^c \right) \right\},
\end{equation*}
we can thus consider --- drawing an analogy with the standard approach of studying Cheeger numbers through normalized cuts --- the following optimization problem closely related with \eqref{eq:learning-group-actions-formulation}:
\begin{equation}
  \label{eq:frustration-cut-measure}
  \begin{aligned}
    \xi_{\Gamma}&:=\min_{S\subset V}\xi \left( S \right)\\
    &:=\min_{S\subset V}\,\,\left[\nu \left( S \right) + \nu \left( S^c \right) \right]\left( \frac{1}{\mathrm{vol}\left( S \right)}+\frac{1}{\mathrm{vol}\left( S^c \right)} \right).
  \end{aligned}
\end{equation}
Recall from \eqref{eq:frustration-group-valued-vert-potential-defn} that $\xi_{\Gamma}$ further simplifies into
\begin{equation}
  \label{eq:lgas-constant}
  \begin{aligned}
    \xi_{\Gamma}&=\min_{S\subset V}\inf_{g\in C^0 \left( \Gamma;O \left( d \right) \right)}\frac{1}{2d}\frac{1}{\mathrm{vol}\left( \Gamma \right)}\sum_{\substack{i,j\in V\\ \left( i,j \right)\notin \partial S}}w_{ij}\left\| g_i-\rho_{ij}g_j \right\|_{\mathrm{F}}^2\cdot \frac{\mathrm{vol}\left( \Gamma \right)}{\mathrm{vol}\left( S \right)\mathrm{vol}\left( S^c \right)}\\
    &=\min_{S\subset V}\inf_{g\in C^0 \left( \Gamma;O \left( d \right) \right)}\frac{1}{2d}\frac{1}{\mathrm{vol}\left( S \right)\mathrm{vol}\left( S^c \right)}\sum_{\substack{i,j\in V\\ \left( i,j \right)\notin \partial S}}w_{ij}\left\| g_i-\rho_{ij}g_j \right\|_{\mathrm{F}}^2,
  \end{aligned}
\end{equation}
where
\begin{equation*}
  \partial S:= \left\{ \left( u,v \right)\in E\mid u\in S, v\in S^c\textrm{ or }u\in S^c, v\in S \right\}.
\end{equation*}
In other words, the goal of solving the optimization problem \eqref{eq:lgas-constant} is to lower the total frustration of the graph $\Gamma$ by dropping out a minimum set of edges under the constraint that the residual graph consists of two connected components; this is equivalent to say that we seek a most economic graph cut in terms of reducing total frustration. To simplify statements, we shall refer to $\left\| g_i-\rho_{ij}g_j \right\|_{\mathrm{F}}^2$ as the \emph{frustration on edge $\left( i,j \right)\in E$} of vertex potential $g$ with respect to the edge potential $\rho$, and call the collection of frustrations on all edges the \emph{edge-wise frustrations}. The sum of all edge-wise frustrations will be referred to as the \emph{total frustration}.

Formulation \eqref{eq:lgas-constant} motivates a greedy algorithm that alternates between minimizing graph cuts and vertex potentials. We shall refer to this algorithm as \emph{Synchronization Cut}, or \verb|SynCut| for short; see Algorithm~\ref{alg:learning-group-actions}. We describe the main steps in \verb|SynCut| below:
\begin{enumerate}
{\setlength\itemindent{25pt} \item[\bf Step 1.] \emph{Initialization:} Input data include the weighted graph $\Gamma=\left(V,E,w\right)$, edge potential $\rho\in C^1 \left( \Gamma;G \right)$, and parameters required for
the spectral clustering subroutine plus termination conditions for the main loop. Initialize iteration counter $t=0$, and dynamic graph weights $\epsilon$ to be the input graph weights $w$;}

{\setlength \itemindent{25pt} \item[\bf Step 2.] \emph{Global Synchronization:} Synchronize the edge potential $\rho$ on the entire graph $\Gamma$ with respect to edge weights. Any synchronization algorithm can be used in this step, e.g. spectral relaxation \cite{BSS2013,CKS2015} or SDP relaxation \cite{BCSZ2014,Singer2011a,CKS2015,BKS2016,NaorRegevVidick2013}. Note that in this step the synchronization is performed on $\Gamma$ with dynamic weights $\epsilon$ instead of the original weights $w$. Denote $f^{\left( t \right)}$ as the
edge potential on $\Gamma$ at the edge potential at the $t$-th iteration;}

{\setlength \itemindent{25pt} \item[\bf Step 3.] \emph{Spectral Clustering (First Pass):} Update dynamic weights $\epsilon$ based on the frustration of $f^{\left( t \right)}$ on each edge by $$\epsilon_{ij}=w_{ij}\exp \left( -\frac{1}{\sigma}\left\|f^{\left( t \right)}_i-\rho_{ij}f^{\left( t \right)}_j\right\|^2_{\mathrm{F}}\right),\quad\textrm{where $\sigma>0$ is the average of all non-zero edge-wise frustrations}$$then partition the vertex set $V$ of graph $\Gamma$ into $K$ clusters $S_1,\cdots,S_K$ using spectral clustering based on the updated dynamic weights $\epsilon$. The goal is to cut the graph $\Gamma$ into more synchronizable clusters; edges causing large frustration are assigned relatively smaller weights $\epsilon_{ij}$ to increase the chance of being cut. To simplify notation, we will also use $S_{\ell}$ ($1\leq\ell\leq  K$) to denote the subgraph of $\Gamma$ spanned by the vertices in $S_{\ell}$;}

{\setlength \itemindent{25pt} \item[\bf Step 4.] \emph{Local Synchronization:} Synchronize the edge potential $\rho$ within each partition $S_{\ell}$, $1\leq\ell\leq K$. If we denote $\rho|_{S_{\ell}}$, $\epsilon|_{S_{\ell}}$ for the restrictions of $\rho$, $\epsilon$ to $S_{\ell}$, respectively, then this step solves the synchronization problem on weighted graph $\left(S_{\ell}, \epsilon|_{S_{\ell}}\right)$ for prescribed edge potential $\rho|_{S_{\ell}}$. Again, any synchronization algorithm can be used in this step. Denote $g^{\left( \ell \right)}$ for the resulting vertex potential on $S_{\ell}$;}

{\setlength \itemindent{25pt} \item[\bf Step 5.] \emph{Collage:} After obtaining $g^{\left( \ell \right)}$ for each local synchronization on $S_{\ell}$, we make a ``collage'' from these local solutions to form a global vertex potential defined on the entire graph $\Gamma$. Since each $g^{\left( \ell \right)}$ is obtained from synchronizing within $S_{\ell}$, the collected local solutions $\left\{ g^{\left( \ell \right)} \right\}_{\ell=1}^K$ generally incur large incompatibility (frustration) on edges across partitions. Our strategy is to find $K$ elements $h_1,\cdots,h_K\in G$, where each $h_{\ell}$ acts on $g^{\left( \ell \right)}$ by $g^{\left( \ell \right)}_u\mapsto g^{\left( \ell \right)}_uh_{\ell}$, $\forall u\in S_{\ell}$, such that the \emph{total cross-partition frustration}
\begin{equation}
  \label{eq:cross-partition-frustration}
  C \left(h_1,\cdots,h_K\,\big|\,\left\{ S_{\ell} \right\}_{1\leq\ell\leq K}, \left\{ g^{\left( \ell \right)} \right\}_{1\leq \ell \leq K}  \right):=\sum_{1\leq p\neq q\leq K}\sum_{\substack{\left( u,v \right)\in E\\u\in S_p, v\in S_q}}w_{uv}\left\| g^{\left( p \right)}_uh_p-\rho_{uv}g^{\left( q \right)}_vh_q \right\|_{\mathrm{F}}^2
\end{equation}
is minimized. Note that this is essentially synchronizing an edge potential
\begin{equation}
  \label{eq:cross-partition-edge-potential}
  \widetilde{\rho}_{pq}=
  \begin{cases}
    \displaystyle\sum_{\substack{\left( u,v \right)\in E\\u\in S_p, v\in S_q}}w_{uv}\left(g_u^{\left( p \right)}\right)^{-1}\rho_{uv}g_v^{\left( q \right)} & \textrm{if partitions $S_p$, $S_q$ are connected}\\
    \displaystyle 0 & \textrm{otherwise}
  \end{cases}
\end{equation}
on a reduced complete graph $\widetilde{\Gamma}_K$ consisting of $K$ vertices where each vertex represent one of the $K$ partitions $S_1,\cdots,S_K$. It thus simply requires calling the synchronization routine again to obtain $h_1,\cdots,h_K$, but this time the scale of the synchronization problem is often much smaller than the previous global and local synchronization steps. Also note that for the binary cut case $K=2$ and $G=O \left( d \right)$ this collage step is even simpler: it suffices to perform an single SVD on the $d\times d$ matrix
\begin{equation*}
  \sum_{\substack{\left( u,v \right)\in E\\u\in S_1,v\in S_2}}w_{uv}\left(g_u^{\left( p \right)}\right)^{-1}\rho_{uv}g_v^{\left( q \right)}=U\Sigma V^{\top}
\end{equation*}
and set $h_1=UV^{\top}$, $h_2=I_{d\times d}$.}

{\setlength \itemindent{25pt} \item[\bf Step 6.] \emph{Spectral Clustering (Second Pass):} Update dynamic weights $\epsilon$ based on the frustration of $f^{*}$ on each edge by $$\epsilon_{ij}=w_{ij}\exp \left( -\frac{1}{\sigma}\left\|f^{*}_i-\rho_{ij}f^{*}_j\right\|^2_{\mathrm{F}} \right),\quad\textrm{where $\sigma>0$ is the average of all non-zero edge-wise frustrations}$$then partition $\Gamma$ into $K$ clusters $S_1,\cdots,S_K$ using spectral clustering for a second time, based on the updated dynamic weights $\epsilon$.
}

{\setlength \itemindent{25pt} \item[\bf Step 7.] \emph{Repeat Step 2 to Step 6 Until Convergence.} The termination condition can be specified either by a maximum number of iterations or monitoring the change of the quantity
\begin{equation}
  \label{eq:objective-xi}
  \xi \left( \left\{ S_1,\cdots,S_K \right\} \right):=\left( \sum_{\ell=1}^K\nu \left( S_{\ell} \right) \right)\left( \sum_{k=1}^K \frac{1}{\mathrm{vol}\left( S_{k} \right)} \right).
\end{equation}
At the end of the procedure, return the partitions $\left\{ S_1,\cdots,S_K \right\}$ and the final edge potential $f^{*}$ from the most recent updates. The cycle-consistent edge potential on partition $S_{\ell}$ is encoded in the restriction of $f^{*}$ to $S_{\ell}$.}

\end{enumerate}

\begin{algorithm}
\caption{{\sc Synchronization Cut}: Learning Group Actions by Synchronization}
\label{alg:learning-group-actions}
 \begin{algorithmic}[1]
   \Procedure{SynCut}{$\Gamma$, $\rho$, $K$}\Comment{weighted graph $\Gamma=\left( V,E,w \right)$, $\rho\in C^1\left( \Gamma; G \right)$, number of partitions $K$}
   \State{$t=0$}
   \State{$\epsilon=w$}
   \While{not converge}
     \State $f^{\left( t \right)}\in C^{0}\left(\Gamma;G\right)\gets$ {\sc Synchronize}$\left( \Gamma,\rho,\epsilon \right)$
     \State $\displaystyle\sigma\gets\textrm{ average non-zero edge-wise frustrations of $f^{\left( t \right)}$}$
     \For{$\left( i,j \right)\in E$}\Comment{calculate weights $\epsilon$ on graph $\Gamma$ for spectral clustering}
       \State $\epsilon_{ij}\gets w_{ij}\exp\left(-\displaystyle\frac{1}{\sigma}\left\| f^{\left( t \right)}_i-\rho_{ij}f^{\left( t \right)}_j \right\|_{\mathrm{F}}^2\right)$
     \EndFor
     \State $\left\{S_1,\cdots,S_K\right\}\gets$ {\sc SpectralClustering}$\left( \Gamma,\epsilon \right)$
     \For{$\ell=1,\cdots,K$}
       \State $g^{\left( \ell \right)}\in\Omega^0 \left( S_{\ell}; G \right)\gets$ {\sc Synchronize}$\left( S_{\ell}, \rho|_{S_{\ell}}, \epsilon|_{S_{\ell}}\right)$
     \EndFor
     \State $f^{*}\in\Omega^0 \left( \Gamma; G \right)\gets$ {\sc Collage}$\left( \left\{ S_{\ell} \right\}_{\ell=1}^K,\left\{ g^{\left( \ell \right)} \right\}_{\ell=1}^K \right)$
     \State $\displaystyle\sigma\gets\textrm{ average non-zero edge-wise frustrations of $f^{*}$}$
     \For{$\left( i,j \right)\in E$}\Comment{update weights $\epsilon$ on graph $\Gamma$ for next iteration}
       \State $\epsilon_{ij}\gets w_{ij}\exp\left(-\displaystyle \frac{1}{\sigma}\left\| f^{*}_i-\rho_{ij}f^{*}_j \right\|_{\mathrm{F}}^2\right)$
     \EndFor
     \State $\left\{S_1,\cdots,S_K\right\}\gets$ {\sc SpectralClustering}$\left( \Gamma,\epsilon \right)$
     \State $t\gets t+1$
   \EndWhile
 \State \Return $\left\{ S_1,\cdots,S_K \right\}$, $f^{*}$\Comment{$f^{*}$ defines a cycle-consistent edge potential on each partition}
   \EndProcedure
 \end{algorithmic}
\end{algorithm}

\subsection{Results on Simulated Random Synchronization Networks}
\label{sec:numer-exper-synth}

In this subsection, we use simulations to provide some intuition for the behavior of SynCut under the setting $K=2$ (two partitions). We first specify a random procedure to simulate input data --- a connected random graph with a prescribed edge potential --- for synchronization problems. In addition, the random graph generation procedure will be controlled by a parameter that allows us to adjust the level of obstruction to the synchronizability of the prescribed edge potential over the generated graph. We then specify the metrics used for performance measure. We conclude by demonstrating that the partition generated from SynCut recovers the two synchronizable connected components with high accuracy and within relatively few numbers of iterations. For the simplicity of statements, we refer to each pair of generated graph and edge potential an instance of a \emph{random synchronization network}.

\subsubsection{Random Synchronization Network Simulation}
\label{sec:random-sync-network}

We first specify the procedure to generate the random graphs. Our intention is to sample random graphs with sufficiently variable spectral gaps, based on the intuition that a large spectral gap of the underlying graph results in greater obstruction to the synchronizability of the edge potential constructed by the procedures that will soon be described in this subsection. We first generate two partitions $S_1,S_2$ with an equal number of vertices. Each partition is a connected component built from a vertex degree sequence of random integers uniformly distributed in an interval (say $5$ to $8$), adapting an algorithm first proposed in \cite{BlitzsteinDiaconis2010}; when the interval is a single integer, the connected component is a regular graph. Random edges are than added to link the two partitions $S_1,S_2$. The number of inter-component random edges positively correlates with the spectral gap, as shown in Figure~\ref{fig:simulation-histograms-supp}, suggesting that this number can be used as a parameter to adjust the level of obstruction to cutting the graph into two connected components $S_1$ and $S_2$.

A subtlety in this random network generation procedure is that a uniform distribution on the number of inter-component links does not induce a uniform distribution on the spectral gaps of the generated random graphs, due to concentration effects. A precise characterization of the distribution of spectral gaps in our random graph model is interesting on its own right but beyond the scope of this paper. We refer interested readers to the existing literature on the spectral gaps of random graphs such as \cite{ChungLuVu2003,CoL2006,HoffmanKahlePaquette2012}. In practice, we simply use a large number of random trials to generate sufficiently many sample graphs with spectral gaps within desirable ranges; see Figure~\ref{fig:simulation-histograms}a.

After drawing an instance of the random graph, we randomly construct an edge potential that is synchronizable within $S_1$ and $S_2$, but not necessarily synchronizable on the inter-component links. The procedure to generate the random edge potentials proceeds as follows:
\begin{enumerate}[(1)]
\item Randomly generate a vertex potential $g\in C^0 \left( \Gamma;G \right)$ for the entire graph $\Gamma$;
\item Set the value of $\rho$ on edge $\left( i,j \right)$ according to
  \begin{equation*}
    \rho_{ij}=
    \begin{cases}
      g_ig_j^{-1} & \textrm{if both $i,j\in S_1$ or $i,j\in S_2$,}\\
      \textrm{a random matrix in $O \left( d \right)$} & \textrm{otherwise.}
    \end{cases}
  \end{equation*}
\end{enumerate}
The vertex potential $g\in C^0 \left( \Gamma;G \right)$ will no longer attain the minimum frustration for the entire graph with respect to the prescribed edge potential $\rho$, due to the edges added between the partitions that are much less likely synchronizable.

\begin{figure}[htbp]
\centering
\includegraphics[width=0.75\textwidth]{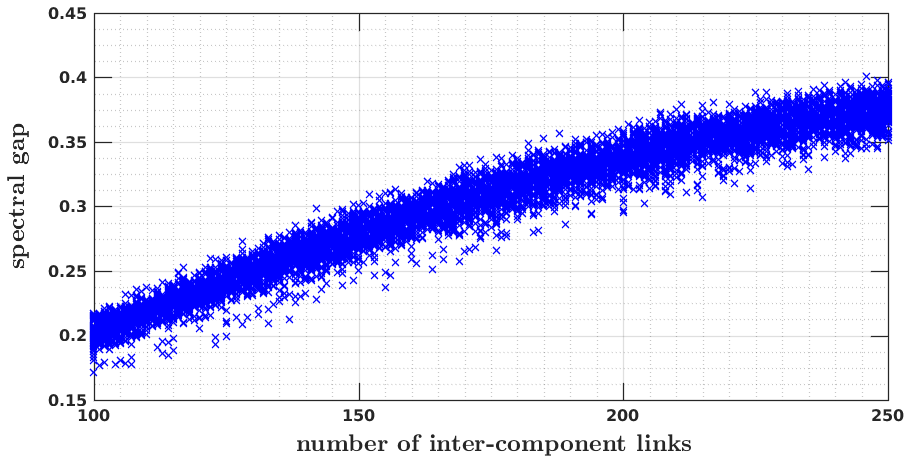}
\caption{\small A scatter plot displaying the correlation between the number of inter-component links and the spectral gap in our random graph model, with $N=100$ vertices and the (integer) number of inter-component links uniformly distributed between $100$ and $250$.}
\label{fig:simulation-histograms-supp}
\end{figure}

We consider each run of SynCut as successful if both output partitions are synchronizable connected components, i.e. if SynCut recovers the original partitions $S_1, S_2$. The performance of SynCut is measured using the \emph{error ratio} computed by dividing the number of erroneously clustered vertices by the total number of vertices. If SynCut successfully recovers $S_1,S_2$, this error ratio is $0$; if the output partition is close to a random guess, or if the algorithm fails to separate the vertices into distinct clusters, the error ratio is $0.5$. The error ratios of the partitions output from SynCut are then compared with a baseline graph cutting algorithm using normalized graph cut (NCut) that does not utilize any information in the prescribed edge potential; see e.g. \cite{ShiMalik2000,vonLuxburg2007}. We refer interested readers to \cite{BGHHL2018} for an initial attempt at analyzing this phenomenon for the scenario where the group $G$ is a symmetric group (group of permutations) and the underlying graph is generated from a stochastic block model.

\subsubsection{Simulation Results}
\label{sec:simulation-results}

\begin{figure}[htbp]
    \centering
    \includegraphics[width=1.0\textwidth]{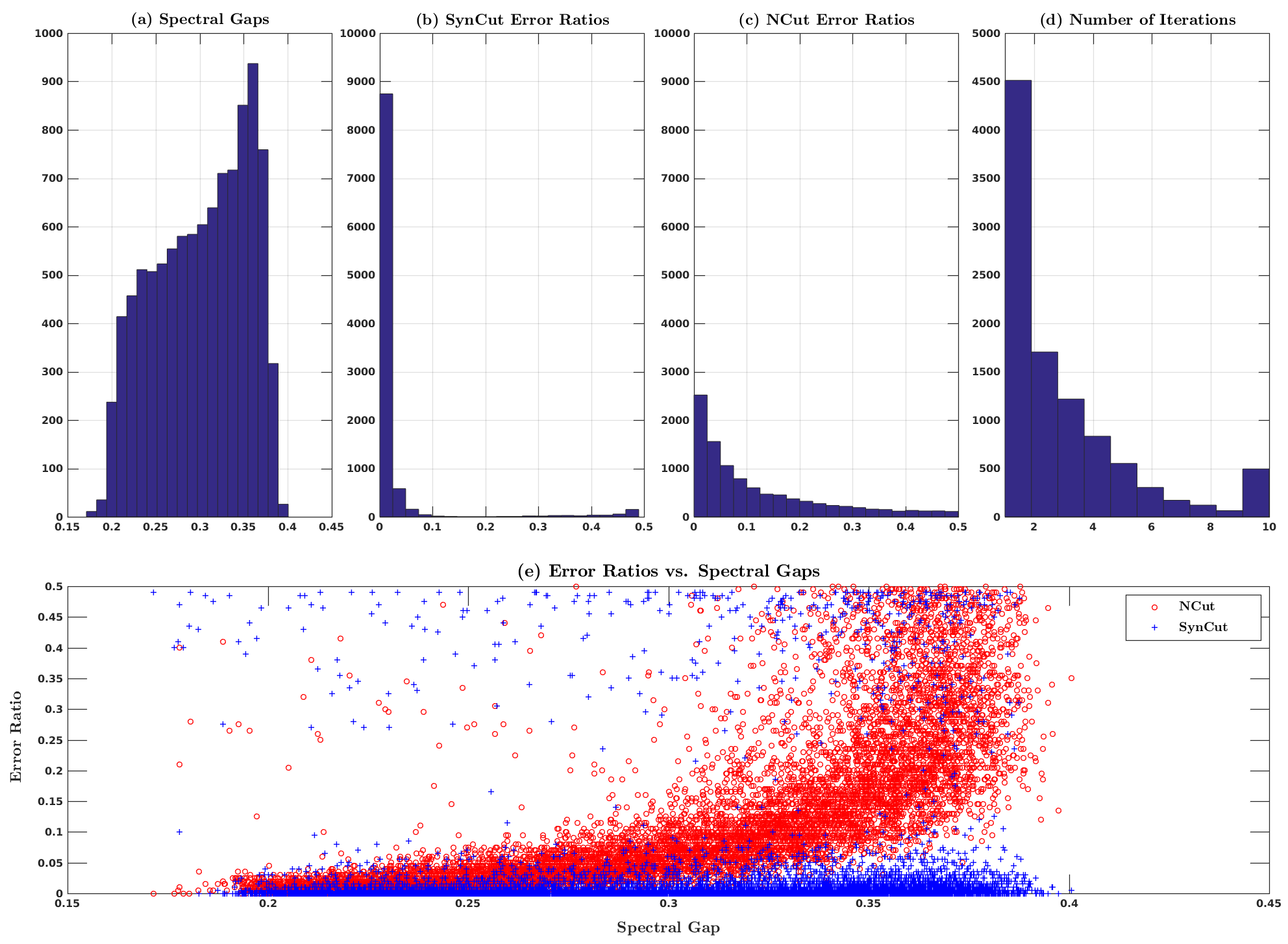}
\caption{\small\emph{(a)} Histogram of the spectral gap of the $10,000$  random graphs drawn from our model. \emph{(b)} Histogram of the error ratios of the SynCut  clustering results. \emph{(c)} Histogram of the error ratios of the baseline NCut clustering results. \emph{(d)} Histogram of the number of iterations for SynCut. \emph{(e)} Scatter plots of the error ratios of SynCut and NCut versus spectral gap. }
\label{fig:simulation-histograms}
\end{figure}

In this simulation study, we set the number of vertices in each of the two synchronizable components to $N=100$, and the entries of the vertex degree sequence of random integers are independently uniformly distributed between $4$ and $8$. The number of inter-component links between the two synchronizable components is drawn uniformly between $100$ and $250$. The edge potentials are valued in the orthogonal group $O \left( d \right)$ with $d=5$. We terminate SynCut either after $10$ iterations or if the change in the value of the objective function $\xi$ [see \eqref{eq:objective-xi}] between consecutive iterations falls below a preset tolerance of $10^{-8}$. We plot in Figure~\ref{fig:simulation-histograms}(a) the spectral gaps of $10,000$ realizations of our random network model. In Figure~\ref{fig:simulation-histograms}(b) and Figure~\ref{fig:simulation-histograms}(c) we observed that the error ratios in these $10,000$ runs of SynCut tend to be much smaller than NCut, suggesting that SynCut outputs more accurate partitions with respect to synchronizability. In Figure~\ref{fig:simulation-histograms}(e), we again see that SynCut outperforms NCut and the amount of improvement increases with the magnitude of the spectral gap. Figure~\ref{fig:simulation-histograms}(d) shows that SynCut converges quickly.

\begin{figure}[htbp]
\centering
\includegraphics[width = 1.0\textwidth]{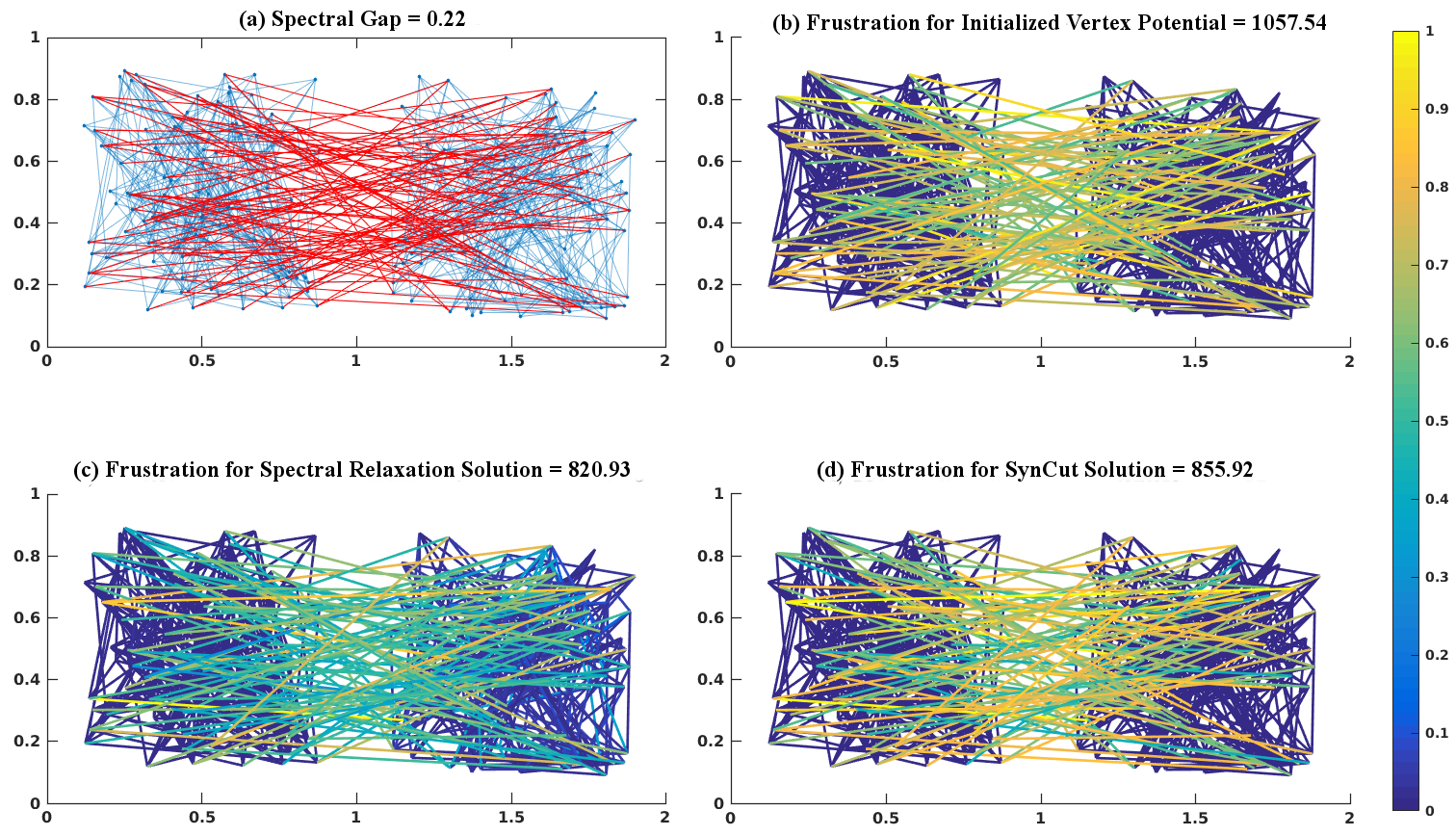}
\caption{\small \emph{(a)} A random graph consisting of two synchronizable connected components, each with $100$ vertices and $250$ edges (in blue), and $100$ non-synchronizable inter-component edges (in red). The edge potential takes value in the orthogonal group $O \left( 5 \right)$. All vertex degrees in each synchronizable component are set to $5$. \emph{(b)} Edge-wise frustration for the vertex potential $g$ used to generate the prescribed edge potential $\rho$. As expected, frustration is small within each connected component but large between components. \emph{(c)} Edge-wise frustration for the vertex potential obtained from spectral relaxation \cite{BSS2013}. The total frustration is lower than that in the top right figure, but the inter-component edges carries relatively lower frustration since the relaxation procedure tends to ``spread'' the non-synchronizability across the entire graph. \emph{(d)} Edge-wise frustration for the vertex potential obtained from SyncCut. The total frustration is higher than that for the spectral relaxation solution, but the distribution of frustrations on the edges is closer to that of vertex $g$ and can thus be used to recover the synchronizable connected components.}
\label{fig:synthetic-datasets}
\end{figure}

We now focus on a particular instance  of a random synchronization network to better understand SynCut in comparison with the spectral relaxation algorithm proposed in \cite{BSS2013}. Each synchronizable component in the random synchronization network shown in Figure~\ref{fig:synthetic-datasets}(a) is a regular graph containing $N=100$ vertices and $250$ edges, generated with a constant vertex degree sequence of $5$. We color the edges within and between synchronizable components in blue and red, respectively. In Figure~\ref{fig:synthetic-datasets}(b) we plot the edge-wise frustration for the vertex potential $g$ used to generate the edge potential $\rho$ prescribed to the network. As expected, the frustration is zero within each synchronizable component but large on the edges across components. Figure~\ref{fig:synthetic-datasets}(c) and Figure~\ref{fig:synthetic-datasets}(d) show the edge-wise frustrations for two vertex potentials obtained from the spectral relaxation algorithm \cite{BSS2013} and SynCut, respectively. Though the total frustration is larger for SynCut than spectral relaxation, the SynCut solution concentrates most of the frustration on the non-synchronizable inter-component edges, with a distribution of edge-wise frustrations closer to the distribution for the initial vertex potential $g$. This suggests that applying a spectral graph cut algorithm using the edge-wise frustration of the SynCut solution as a dissimilarity measure is advantageous, as the distribution in Figure~\ref{fig:synthetic-datasets}(d) identifies the obstructions to synchronizability in the synchronization network more accurately.

\section{Application to Automated Geometric Morphometrics}
\label{sec:example}

In this section we formulate a problem in automated geometric morphometrics in terms of LGAS, then apply the SynCut algorithm to provide a solution. In Section~\ref{sec:morph-datas} we provide some background in geometric morphometrics and its relation to synchronization problems. In Section~\ref{sec:clust-synchr} we apply SynCut to a collection of second mandibular molars of prosimian primates and non-primate close relatives. The morphological hypothesis is that the geometric traits of second mandibular molars cluster into $3$ dietary regimens: \emph{folivorous} (herbivores that eat leaves), \emph{frugivorous} (herbivores or omnivores that prefer fruit), and \emph{insectivorous} (a carnivore that eats insects). We will show the SynCut result, which is based on the synchronizability of pairwise correspondences, and compare it with a distance-based clustering result using diffusion maps \cite{CoifmanLafon2006}.

\subsection{Geometric Morphometrics and Synchronization}
\label{sec:morph-datas}

The classic tool in geometric morphometrics is \emph{Procrustes analysis}. The basic assumption underlying this analysis framework is that most of the geometric information on each shape can be efficiently encoded in a set of landmark points carefully picked to highlight the morphometrical phenotypes (variation in the geometric shape of an organism). The \emph{Procrustes distance} between two shapes is the average Euclidean distance between corresponding landmarks, after applying a rigid motion (rotations, reflections, translations, and their compositions) to optimally align the two sets of landmarks. If all the shapes are marked with an equal number of landmarks but the landmark correspondence is not known \emph{a priori}, a combinatorial search can be performed over all permutations of one-to-one landmark correspondences, and the minimum average Euclidean distance between corresponding landmarks can be taken as a dissimilar measure between the two shapes. Comparing a pair of shapes in this framework thus yields abundant pairwise information, including a scalar dissimilarity score, a rigid motion, and a permutation matrix encoding the one-to-one landmark correspondence.

In automated geometric morphometrics, landmark points are not used to represent the shapes, and algorithms search for an ``optimal'' transformation between a pair of whole shapes directly by minimizing energy functionals over a set of admissible transformations. Depending on the specific class of transformations and energy functional, the pairwise comparisons produce different types of correspondences between surfaces, such as conformal/quasiconformal transformations, isometries, area-preserving diffeomorphisms, or even transport-plans between surface area measures in a Wasserstein framework. Regardless of the type of admissible transformations, the algorithm can output a rigid motion for the optimal alignment between two shapes, as well as a dissimilarity or similarity score for such an alignment. See Figure~\ref{fig:molars} for an example of representing a collection of shapes using landmarks versus triangular meshes.

\begin{figure}[htbp]
\centering
\includegraphics[width = 0.9\textwidth]{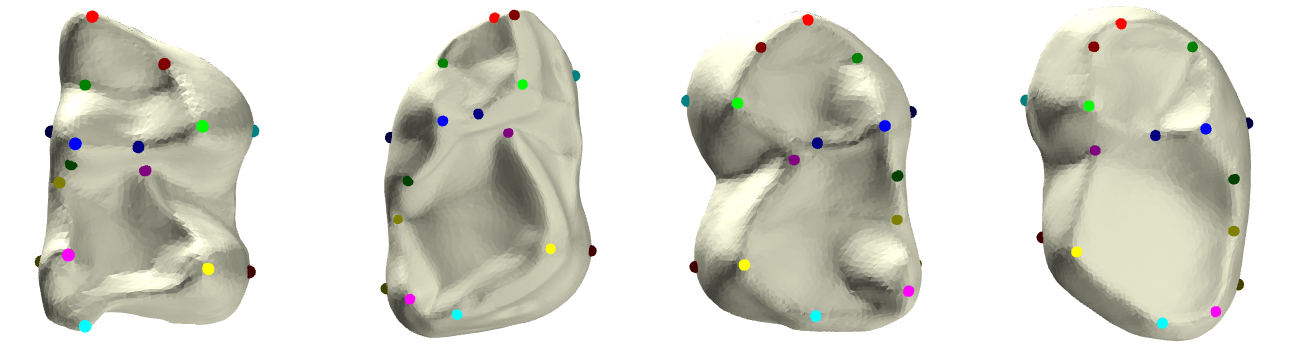}
\caption{\small The meshes of four lemur molars from an an anatomical surface dataset  first published in \cite{PNAS2011}. The colored dots on the molars are landmark points where 
identical colors indicate corresponding landmarks.}
\label{fig:molars}
\end{figure}

When the analysis is extended from comparing a single pair to a large collection of shapes, a crucial premise for downstream statistical analysis (e.g. \emph{General Procrustes Analysis} (GPA) \cite{Gower1975,DrydenMardia1998SSA,GowerDijksterhuis2004GPA}) is that the pairwise correspondences be \emph{cycle-consistent}, meaning that propagating any landmark on any shape by consecutive correspondences along a close cycle of shapes should land exactly at the original landmark. Traditional landmark-based Procrustes analysis begins with consistently picking an equal number of landmarks on each shape, resulting in a large amount of pairwise correspondence relations that are cycle-consistent by construction. This is, however, not the situation with automated geometric morphometrics, where the correspondence transformations produced by automated algorithms are rarely cycle-consistent, even when one localizes the transformations within relatively ``stable'' regions where landmarks are affixed with the knowledge of an experienced geometric morphometrician. The necessity of cycle-consistent correspondences links automated geometric morphometrics to synchronization problems. An automated geometric morphometric algorithm will output for each pair of shapes a triplet consisting of a dissimilarity score, a rigid motion, and a pairwise transformation. We can use the dissimilarity scores to define a weighted graph $\Gamma$ that captures the similarities within the collection, both qualitatively and quantitatively, by adjusting the number of nearest neighbors of each vertex and the weights on each edge. The rigid motions and pairwise transformations define two edge potentials on $\Gamma$, taking values in different groups. We list below some interesting synchronization problems arising from this formulation:

\textbf{Three-Dimensional Euclidean Group.} The rigid motions $R_{ij}$ between shapes $S_i$ and $S_j$ that share an edge in $\Gamma$ define an edge potential $R\in C^1 \left( \Gamma; E \left( 3 \right) \right)$, where $E \left( 3 \right)$ is the three-dimensional Euclidean group. Solving an $E \left( 3 \right)$-synchronization problem over $\Gamma$ with respect to $R$ results in a globally consistent alignment for a collection of shapes, which is often crucial for initializing geometric morphometrical analysis algorithms such as \emph{Dirichlet Normal Energy}~\cite{BBLSJD2011}, \emph{Orientation Patch Analysis}~\cite{EWFJ2007}, and \emph{Relief Index}~\cite{Boyer2008}. Algorithms that automatically align a collection of anatomical shapes in a globally consistent manner can also be viewed as primitive approaches for solving $E \left( 3 \right)$-synchronization problems; see e.g. \cite{PuenteThesis2013,Auto3dGM2015,MolaR2016,GBJLBP2016}.

\textbf{Orthogonal Group and Orientability Detection.} If the shapes are preprocessed to superimpose the centers of mass at the same point, the translation component of each $R_{ij}$ output from a pairwise landmark-based Procrustes analysis vanishes\footnote{Note this is not the case for jointly analyzing a collection of shapes in a landmark-based Procrustes analysis framework; see e.g. \cite{CKS2015}.}. This reduces the global alignment problem to standard synchronization problems over the compact Lie group $O \left( 3 \right)$. Spectral and semidefinite programming (SDP) relaxation methods can then be applied directly to solve the global alignment problem. If we consider the edge potential $\rho\in C^1 \left( \Gamma; \mathbb{Z}_{2} \right)$ defined by $\rho_{ij}=\det R_{ij}$, a $\mathbb{Z}_2$-synchronization solution can be used to either partition the dataset into ``left-handed'' and ``right-handed'' subsets or conclude that such an orientation-based partition does not exist. We stated a similar situations in Example~\ref{example:spins}; other examples in this setting can be found in applications of \emph{Orientable Diffusion Maps}~\cite{SingerWu2011ODM}.

\textbf{Automorphism Groups.} Certain classes of transformations $C_{ij}$ between each pair of shapes $S_i$, $S_j$ give rise to an edge potential on the graph $\Gamma$ valued in an automorphism group of a canonical domain. For instance, algorithms such as \emph{M\"obius Voting} and the \emph{Continuous Procrustes Distance}~\cite{CP13} between disk-type surfaces rely on the computation of conformal maps between two shapes, based on uniformization parametrization techniques \cite{PinkallPolthier1993,AHTK1999} that map each surface conformally to a canonical unit disk on the plane. By intertwining $C_{ij}$ with the parametrizations of the source and the target shape, the correspondence between $S_i$and $S_j$ can be equivalently considered as an element of the conformal automorphism group $\Aut \left( \mathcal{D} \right)$ of the planar unit disk $\mathcal{D}$. The group $\Aut \left( \mathcal{D} \right)$ is isomorphic to the \emph{projective special linear group} $\PSL \left(2,\mathbb{R}  \right)$, a non-compact simple real Lie group that is equivalent to the quotient of the \emph{special linear group} $\SL \left( 2,\mathbb{R} \right)$ by $\left\{\pm I_2\right\}$, where $I_2$ denotes the $2\times 2$ identity matrix. Synchronization problems over $\PSL \left( 2,\mathbb{R} \right)$ or $\SL \left( 2,\mathbb{R} \right)$ require non-trivial extensions of the non-unique games (NUG) framework \cite{BCS2015NUG} over compact Lie groups.

\textbf{Groupoids.} Other types of transformations $C_{ij}$ between each pair of shapes $S_i,S_j$ require further generalizations of the synchronization framework to
edge potentials taking values in a \emph{groupoid} rather than in a group. As an example, consider surface registration techniques based on area-preserving maps \cite{CP13,ZhuHakerTannenbaum2003,AreaPresOT2013,SZSWSG2013}. These techniques use conformal or area-preserving parametrizations to push forward surface area measures on $S_i,S_j$ to measures $\mu_i,\mu_j$ on the planar unit disk $\mathcal{D}$, respectively, then solve for a \emph{transport map} on $\mathcal{D}$ that pulls back $\mu_j$ to $\mu_i$ (or equivalently $\mu_i$ to $\mu_j$). To formulate such ``transport-map-valued'' edge potentials in a synchronization framework, an edge potential should be allowed to take values in different classes of maps on different edges, with the only constraint that maps on consecutive edges can be composed; these ingredients have much in common in spirit with \emph{fundamental groupoids} \cite{Brown1967,BHS2011} and Haefliger's \emph{complexes of groups} \cite{Haefliger1991,Haefliger1992}. Such a generalized framework for synchronization problems can also be used to analyze correspondences $\left\{C_{ij}\right\}$ that are \emph{soft maps} \cite{RangarajanChuiBookstein1997,SNBBG2012SoftMaps} or \emph{transport plans} \cite{LipmanDaubechies2011,LipmanPuenteDaubechies2013,LaiZhao2014}, where one replaces the set of transport maps between $\mu_i$ and $\mu_j$ with (probabilistic) couplings $\Pi \left( \mu_i,\mu_j \right)$ as in Kantorovich's relaxation to the Monge optimal transport problem \cite{Villani2003,Villani2008}. The \emph{Horizontal Diffusion Maps} (HDM) framework \cite{HDM2016} and the application in automated geometric morphometrics \cite{Gao2015Thesis} are among the initial attempts in this direction.

\subsection{Clustering Lemurs by Dietary Regimens using Synchronizability of Molar Surfaces}
\label{sec:clust-synchr}

We focus on a real anatomical surface mesh dataset of second mandibular molars from $5$ genera of prosimian primates and nonprimate close relatives. There are a total of $50$ molars with $10$ specimens from each genus; see Figure~\ref{fig:SyncAlignment}. The five genera divide into three dietary regimens: the \emph{Alouatta} and \emph{Brachyteles} are folivorous, the \emph{Ateles} and \emph{Callicebus} are frugivorous, and the \emph{Saimiri} are insectivorous. In Figure~\ref{fig:molars} we display four lemur molars from an anatomical surface dataset first published in \cite{PNAS2011}, together with landmarks on each molar placed by evolutionary anthropologists. Similar datasets have been studied in a series of papers developing algorithms for automatic geometric morphometrics \cite{PNAS2011,CP13,LipmanDaubechies2011,LipmanPuenteDaubechies2013,KoehlHass2015,GKD2019,GKBD2019}. The chewing surface of each molar is digitized as a two-dimensional triangular mesh in $\mathbb{R}^3$ of disk-type topology (i.e. conformally equivalent with a planar disk).  We will apply SynCut to these $50$ molars and examine if the clustering is consistent with dietary regimens.

\begin{figure}[htbp]
\centering
\includegraphics[width=1.0\textwidth]{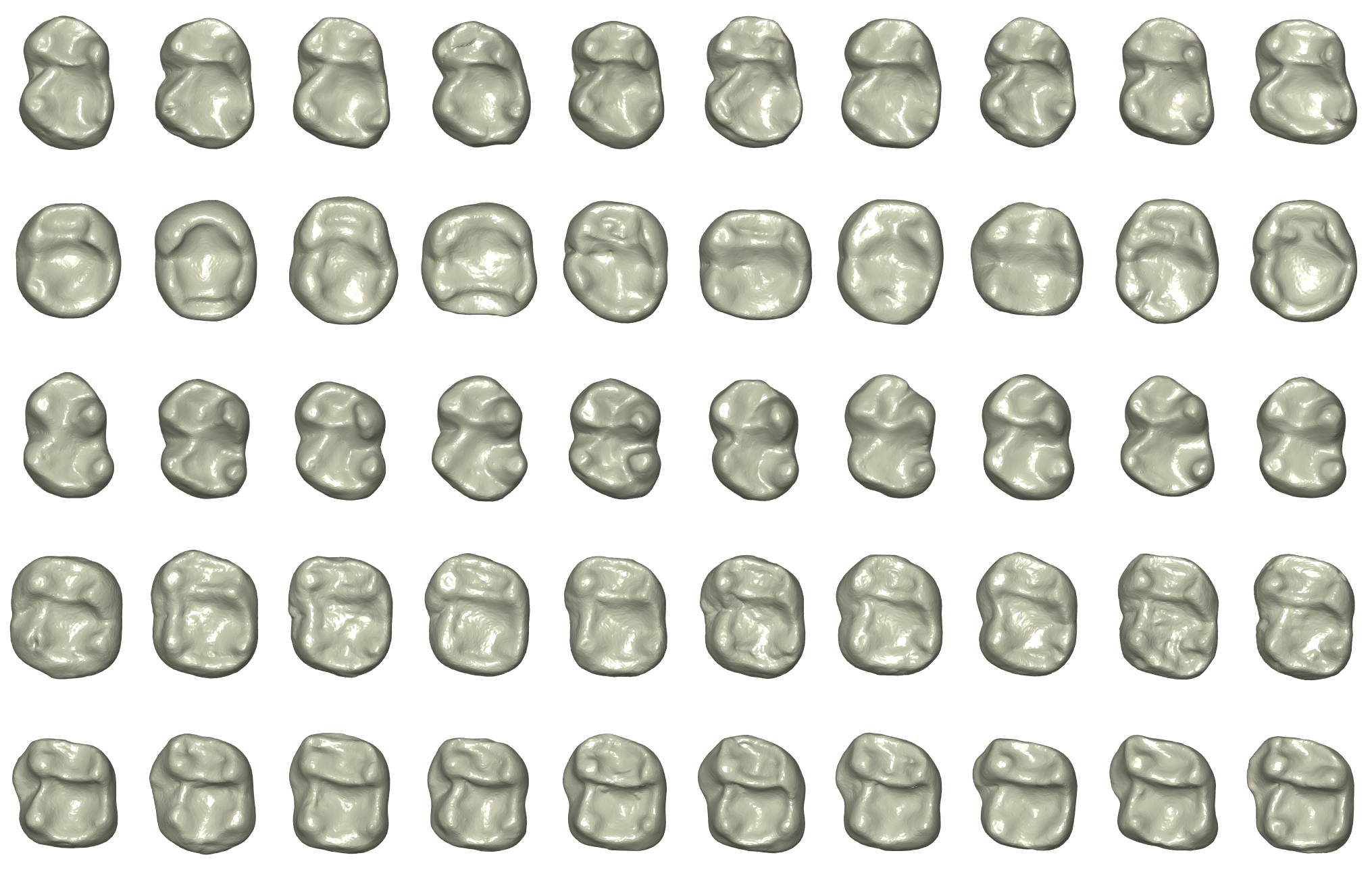}
\caption{\small Consistent alignment of $50$ lemur teeth based on applying SynCut to all pairwise alignments from the continuous Procrustes analysis~\cite{CP13}. 
Each row corresponds to teeth from a genus, from top to bottom:  \emph{Alouatta}, \emph{Ateles}, \emph{Brachyteles}, \emph{Callicebus}, \emph{Saimiri}.}
\label{fig:SyncAlignment}
\end{figure}

\paragraph{Method} We first pre-process the dataset by translating and scaling each shape so that all surface meshes center at the origin and enclose unit surface area. We then apply the continuous Procrustes distance algorithm for each pair of teeth, generating a a distance score $d_{ij}$, a diffeomorphism $C_{ij}$, and an orthogonal matrix $R_{ij}\in O \left( 3 \right)$ that optimally aligns $S_j$ to $S_i$ with respect to the diffeomorphism $C_{ij}$. We use the distance scores to construct a weighted $K$-nearest-neighbor graph $\Gamma$. The  weights are defined as $w_{ij}=\exp \left( -d_{ij}^2\Big/\sigma^2 \right)$ with the bandwidth parameter  $\sigma>0$ set to be of the order of the average smallest non-zero distances.
We apply SynCut to the edge potential $\rho\in C^1 \left( \Gamma;O \left( 3 \right) \right)$ defined by the alignments $R_{ij}$ on the weighted graph $\left( \Gamma,w \right)$. Finally, we compare the clustering performance of SynCut with applying diffusion maps and spectral clustering directly to the weighted graph without the alignment information.

\paragraph{Results}  SynCut and diffusion maps both require the choice of a parameter $K$ determining the number of nearest neighbors in the construction of the graph $\Gamma$. When $6\leq K \leq 10$ both procedures accurately cluster the $50$ molars in the dataset into the three distinct dietary regimens, see Figure~\ref{fig:hdm-diet-tsne} for the two-dimensional embedding plots for $K=7$.  SynCut produces slightly tighter and more distinguishable species clusters. Not surprisingly, for $K>10$ --- when the number of nearest neighbors exceeds the number of specimens in each genus --- both algorithms are less accurate as $K$ increases, with the accuracy of SynCut dropping faster than diffusion maps. This empirical observation is
 consistent with our intuition that the performance of SynCut is more sensitive to increased spectral gaps than diffusion maps.


\begin{figure}[htbp]
\centering
\includegraphics[width=1.0\textwidth]{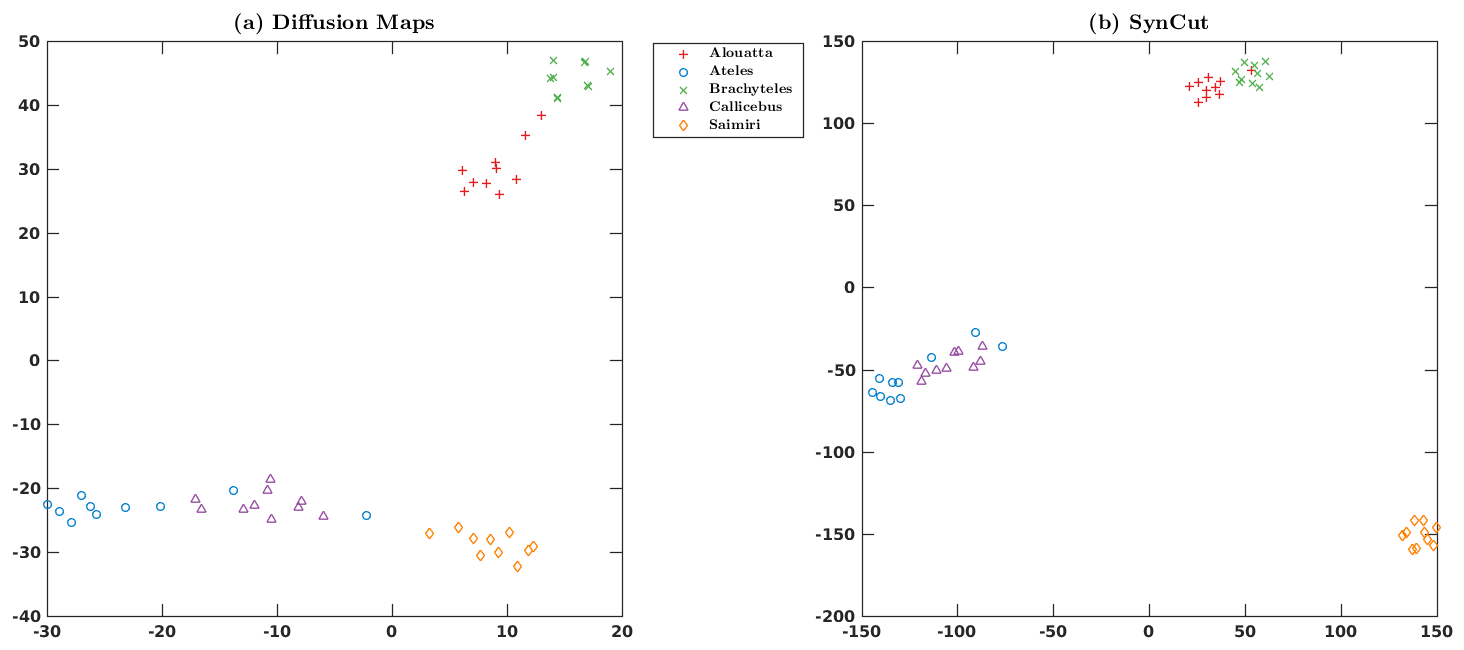}
\caption{\small Embeddings of the $50$ lemur teeth dataset into $\mathbb{R}^2$, obtained by applying diffusion maps (left) and SynCut (right) to the $7$-nearest-neighbor graph. Both plots are post-processed using t-SNE \cite{MH2008tSNE}. \emph{(a)} Diffusion maps applied to the weighted graph $\left(\Gamma,w\right)$ successfully distinguishes three diet groups, but the genera are less distinguishable. \emph{(b)} SynCut produces an edge-wise frustration matrix after the final iteration that can be used by diffusion maps to generate a low-dimensional embedding, in which both dietary groups and genera are more distinguishable.}
\label{fig:hdm-diet-tsne}
\end{figure}

\section{Conclusion and Discussion}
\label{sec:concl-discu}

We provided in this paper a geometric framework for synchronization problems. We first related the synchronizability of an edge potential on a connected graph to the triviality of a flat principal bundle over the topological space underlying the graph, then characterized synchronizability from two aspects: the holonomy of the principal bundle, and the twisted cohomology of an associated vector bundle. On the holonomy side, we established a correspondence between two seemingly distant objects on a connected graph $\Gamma$, namely, the orbit space of the action of $G$-valued vertex potentials on $G$-valued edge potentials, and the representation variety of the fundamental group of $\Gamma$ into $G$; on the cohomology side, we built a twisted de Rham cochain complex on an associated vector bundle $\mathscr{B}_{\rho}\left[ F \right]$ of the synchronization principal bundle $\mathscr{B}_{\rho}$, of which the zero-th degree cohomology group characterizes the obstruction to the synchronizability of the prescribed edge potential.

With the presence of a metric on the associated vector bundle $\mathscr{B}_{\rho}\left[ F \right]$, we also developed a twisted Hodge theory on graphs. Independent of the contribution to synchronization problems, this theory is both a discrete version of the Hodge theory of elliptic complexes and a fibre bundle analogue of the discrete Hodge theory on graphs. Specifically for synchronization problems, this twisted Hodge theory realizes the graph connection Laplacian operator as the zero-th degree Hodge Laplacian in the twisted de Rham cochain complex. A Hodge-type decomposition theorem is also proven, stating that the image of the twisted codifferential is the orthogonal complement of the linear space of $F$-valued synchronization solutions, with respect to the bundle metric.

Motivated by the geometric intuitions gained from these theoretical results, we coined the problem of learning group actions (LGA), and proposed a heuristic algorithm, which we referred to as SynCut, based on iteratively applying synchronization and spectral graph techniques. Numerical simulations on synthetic and real datasets indicated that SynCut has the potential to cluster a collection of objects according to the synchronizability of a subset of partially observed pairwise transformations.

We conclude this paper by listing several problems of interest for future exploration. These are only a subset of a vast collection of potential directions concerning the mathematical, statistical, and algorithmic aspects of synchronization problems:
\begin{enumerate}[1)]
\item \emph{The Representation Variety of Synchronization Problems.} When a prescribed edge potential $\rho$ is not synchronizable over graph $\Gamma$, the goal of the synchronization problem is to find a synchronizable edge potential $\tilde{\rho}$ that is as close as possible to $\rho$ in a sense that has been made clear in this paper. The point of view adopted in Section~\ref{sec:synchr-probl} is that the synchronization problem essentially concerns the orbits of $\rho$ and $\tilde{\rho}$ under the action of all vertex potentials. It is natural to conceive a synchronization algorithm based on the geometry of the orbit space $C^1 \left( \Gamma;G \right)/C^0 \left( \Gamma;G \right)$ that enables efficiently ``moving across'' the orbits. Since the fundamental group of any connected graph is simply a free product of copies of $\mathbb{Z}$, we expect the representation variety $\Hom \left( \pi_1 \left( \Gamma \right), G \right)/G$ to possess relatively simple structures that could be used for guiding the design of novel synchronization algorithms with provable guarantees.
\item \emph{Higher-order Synchronization Problems.} As a simplicial complex, the graph $\Gamma$ only has $0$- and $1$-simplices, which results in only one cohomology group of interest in the de Rham cochain complex \eqref{eq:twisted_chain_complex_with_adjoint}. By extending the twisted de Rham and Hodge theory developed in Section~\ref{sec:geom-synchr-probl} to simplicial complexes of higher dimensions, we expect higher-order synchronization problems can be formulated and studied using tools and insights from high-dimensional expanders and the Hodge theory of elliptic complexes. Generalizing the current regime of synchronization problems, in which only pairwise transformations are considered, the higher-order analogies would enable the study of relations and interactions among multiple vertices in the graph $\Gamma$, which potentially opens doors towards higher-order graphical models and related statistical inference questions as well.
\item \emph{Hierarchical Partial Synchronization Algorithms with Provable Guarantees.} The SynCut algorithm we proposed in this paper can be understood as an iterative hierarchical partial synchronization algorithm, based on the assumption that edge-wise synchronization is an indicator of the synchronizability of a prescribed edge potential over a proper subgraph. The numerical experiments on synthetic and real datasets suggested the validity of this intuition under our random graph model, but no provable guarantees exist either for the convergence or the effectiveness of algorithms similar or related to SynCut, to the best of our knowledge. Building a Cheeger-type inequality as the performance guarantee for SynCut attracted our attention, but even the analogy of Cheeger number (or graph conductance) in the setting of SynCut or LGA is not clear --- whereas the Cheeger number depends only on the graph weights, which are fixed numbers on each edge independent of the graph cut, the notion of edge-wise frustration is highly non-local as the frustration depends on the behavior of the synchronization solution on the entire graph. We conjecture that a Cheeger-type inequality for SynCut, if exists, will reflect the global geometry information encoded by geometric quantities associated with the fibre bundle.
\item \emph{Statistical Framework for Learning Group Actions.} The LGA problem presented in this paper is not formulated with a natural generative model for the dataset of objects with pairwise transformations; nor is assumed any concrete noise models. It would be of interest to provide a systematic, statistical framework under which the problem of LGA and LGAS can be quantitatively analyzed and understood; we believe such a framework also has the potential to bridge statistical inference with synchronization problems.
\end{enumerate}

\appendix
\renewcommand*{\thesection}{Appendix~\Alph{section}}
\normalsize

\section{Proofs of Proposition \ref{prop:synchronizability_flat_bundle} and Formula \eqref{eq:edge-potential-inner-product-equivalent}}
\label{sec:appendix-proof-fibre-bundle-sync}

\begin{proof}[Proof of Proposition \ref{prop:synchronizability_flat_bundle}]
  The construction of $\mathfrak{U}$ using the stars of the vertices of $\Gamma$ ensures that
\begin{enumerate}[(1)]
\item\label{item:1} $U_i\cap U_j\neq \emptyset$ if and only if $\left( i,j \right)\in E$;
\item\label{item:2} $U_i\cap U_j\cap U_k \neq \emptyset$ if and only if the $2$-simplex $\left( i,j,k \right)$ is in $X$.
\end{enumerate}
For such pair $\left( i,j \right)$, define constant map $g_{ij}:U_i\cap U_j\rightarrow G$ as
\begin{equation*}
  g_{ij} \left( x \right)=\rho_{ij}\qquad \forall x\in U_i\cap U_j.
\end{equation*}
Set $g_{ii}=e$ for all $1\leq i\leq \left| V \right|$, and note that $g_{ij} \left( x \right)=g_{ji}^{-1} \left( x \right)$ for all $x\in U_i\cap U_j$ by our assumption on $\rho$. If $\rho$ is synchronizable over $G$, let $f:V\rightarrow G$ be a vertex potential satisfying $\rho$, then $\rho_{ij}=f_i f_j^{-1}$ for all $\left( i,j \right)\in E$ from \eqref{eq:vertex-pot-satisfies-edge-pot}. Thus $\rho_{kj}\rho_{ji}=\rho_{ki}$ for any triangle $\left( i,j,k \right)$ in $\Gamma$, or equivalently that $g_{kj} \left( x \right)g_{ji} \left( x \right)=g_{ki} \left( x \right)$ for all $x\in U_i\cap U_j \cap U_k$. Therefore, $\left\{ g_{ij}\mid 1\leq i,j\leq \left| V \right|\right\}$ defines a system of \emph{coordinate transformations} \cite[\S2]{Steenrod1951} with values in $G$. These data determine a principal fibre bundle $\mathscr{P}_{\rho}$ with base space $\mathcal{X}$ and structure group $G$ --- by a standard construction in the theory of fibre bundles (see e.g. \cite[\S3.2]{Steenrod1951}) --- of which local trivializations are defined on the open sets in $\mathfrak{U}$ with constant transition functions $g_{ij}$; this principal bundle is thus flat by definition. Furthermore, the vertex potential $f$ and the compatibility constraints \eqref{eq:vertex-pot-satisfies-edge-pot} ensure that the following global section $s:\mathcal{X}\rightarrow \mathscr{P}_{\rho}$ is well-defined on this bundle:
\begin{equation*}
  s \left( x \right)=\phi_i\left( x, f_i\right),\quad x\in U_i
\end{equation*}
where $\phi_i:U_i\times G\rightarrow \mathscr{P}_{\rho}$ is the local trivialization of $\mathscr{P}_{\rho}$ over $U_i$. The triviality of this principal bundle then follows from the existence of such a global section; see e.g. \cite[\S8.3]{Steenrod1951}. The other direction of the proposition follows immediately from this triviality criterion for principal bundles.
\end{proof}

\begin{proof}[Proof of Formula \eqref{eq:edge-potential-inner-product-equivalent}]
  \begin{align*}
  \left\langle \omega, \eta \right\rangle &= \frac{1}{2}\sum_{\left( i,j \right)\in E}\left[w_{ij} \left\langle p_i\left(\omega^{\left( i \right)}_{ij}\right), p_i\left(\eta^{\left( i \right)}_{ij}\right) \right\rangle_F+w_{ji} \left\langle p_j\left(\omega^{\left( j \right)}_{ji}\right), p_j\left(\eta^{\left( j \right)}_{ji}\right) \right\rangle_F\right]\\
  &=\frac{1}{2}\sum_{\left( i,j \right)\in E}\left[w_{ij} \left\langle p_i\left(\omega^{\left( i \right)}_{ij}\right), p_i\left(\eta^{\left( i \right)}_{ij}\right) \right\rangle_F+w_{ij} \left\langle\rho_{ij} p_j\left(\omega^{\left( j \right)}_{ji}\right), \rho_{ij}p_j\left(\eta^{\left( j \right)}_{ji}\right) \right\rangle_F\right]\\
  &=\frac{1}{2}\sum_{\left( i,j \right)\in E}\left[w_{ij} \left\langle p_i\left(\omega^{\left( i \right)}_{ij}\right), p_i\left(\eta^{\left( i \right)}_{ij}\right) \right\rangle_F+w_{ij} \left\langle p_i\left(\omega^{\left( i \right)}_{ji}\right), \rho_{ij}p_i\left(\eta^{\left( i \right)}_{ji}\right) \right\rangle_F\right]\qquad\textrm{(see compatibility condition \eqref{eq:global-one-form-simplified})}\\
  &=\frac{1}{2}\sum_{\left( i,j \right)\in E}\left[w_{ij} \left\langle p_i\left(\omega^{\left( i \right)}_{ij}\right), p_i\left(\eta^{\left( i \right)}_{ij}\right) \right\rangle_F+w_{ij} \left\langle p_i\left(\omega^{\left( i \right)}_{ij}\right), \rho_{ij}p_i\left(\eta^{\left( i \right)}_{ij}\right) \right\rangle_F\right]\qquad\textrm{(skew-symmetry)}\\
  &=\sum_{\left( i,j \right)\in E}w_{ij} \left\langle p_i\left(\omega^{\left( i \right)}_{ij}\right), p_i\left(\eta^{\left( i \right)}_{ij}\right) \right\rangle_F.
  \end{align*}
\end{proof}

\section{Graph Laplacian in Discrete Hodge Theory}
\label{sec:graph-lapl-discr}

Define $\mathbb{K}$-valued $0$- and $1$-forms on weighted graph $\Gamma=\left( V,E,w \right)$ as
\begin{equation*}
  \Omega^0 \left( \Gamma \right) := \left\{ f: V\rightarrow \mathbb{K} \right\},\quad \Omega^1 \left( \Gamma \right) := \left\{ \omega:E\rightarrow \mathbb{K}\mid\omega_{ij}=-\omega_{ji}\,\,\forall \left( i,j \right)\in E \right\},
\end{equation*}
equipped with natural inner products
\begin{equation*}
  \begin{aligned}
    \left\langle f,g \right\rangle&:=\sum_id_i \left\langle f_i, g_i  \right\rangle_{\mathbb{K}}, \quad\forall f,g\in \Omega^0 \left( \Gamma \right),\\
    \left\langle \omega,\eta \right\rangle&:=\sum_{\left( i,j \right)\in E}w_{ij}\left\langle \omega_{ij}, \eta_{ij} \right\rangle_{\mathbb{K}},\quad\forall \omega,\eta\in \Omega^1 \left( \Gamma \right),
  \end{aligned}
\end{equation*}
where $\left\langle \cdot,\cdot \right\rangle_{\mathbb{K}}$ is an inner product on $\mathbb{K}$, and $d_i=\sum_{j:\left( i,j \right)\in E}w_{ij}$ is the weighted degree at vertex $i\in V$. Analogous to the study of differential forms on a smooth manifold, one can define the \emph{differential} $d:\Omega^0 \left( \Gamma \right)\rightarrow\Omega^1 \left( \Gamma \right)$ and \emph{codifferential} $\delta:\Omega^1 \left( \Gamma \right)\rightarrow\Omega^0 \left( \Gamma \right)$ operators that are formal adjoints of each other:
\begin{equation*}
  \left(df\right)_{ij}=f_i-f_j,\quad\forall f\in\Omega^0 \left( \Gamma \right),\qquad \left(\delta\omega\right)_i:=\frac{1}{d_i}\sum_{j:\left( i,j \right)\in E}w_{ij}\omega_{ij},\quad\forall\omega\in\Omega^1 \left( \Gamma \right).
\end{equation*}
These constructions can be encoded into a de Rham cochain complex
\begin{equation*}
  0\mathrel{\substack{\xrightarrow[\hspace{0.15in}]{} \\ \vspace{-0.15in}  \\ \xleftarrow[]{\hspace{0.15in}}}}\Omega^0 \left( \Gamma \right) \mathrel{\substack{\xrightarrow[\hspace{0.15in}]{\textrm{\footnotesize $d$}} \\ \vspace{-0.15in}  \\ \xleftarrow[\textrm{\footnotesize $\delta$}]{\hspace{0.15in}}}}\Omega^1 \left( \Gamma \right) \mathrel{\substack{\xrightarrow[\hspace{0.15in}]{} \\ \vspace{-0.15in}  \\ \xleftarrow[]{\hspace{0.15in}}}} 0,
\end{equation*}
which realizes $L_0^{\mathrm{rw}}$, the \emph{graph random walk Laplacian}, as the Hodge Laplacian of degree zero:
\begin{equation*}
  \left(\Delta^{\left( 0 \right)}f\right)_i:=\left(\delta df\right)_i=\frac{1}{d_i}\sum_{j:\left( i,j \right)\in E}w_{ij} \left( f_i-f_j \right)=\left(L_0^{\mathrm{rw}}f\right)_i,\quad\forall i\in V,\,\,\forall f\in\Omega^0 \left( \Gamma \right).
\end{equation*}
It is well known that $L_0^{\mathrm{rw}}$ differs from the \emph{normalized graph Laplacian} $L_0$ by a similarity transform $L_0=D^{-1/2}L_0^{\mathrm{rw}}D^{1/2}$, where $D$ is a diagonal matrix with weighted degrees of each vertex on its diagonal.
\newline

\noindent\textbf{Software} MATLAB code implementing SynCut for the numerical simulations and application in automated geometric morphometrics is publicly available at \url{https://github.com/trgao10/GOS-SynCut}.
\newline

\noindent\textbf{Acknowlegement} The authors would like to thank Pankaj Agarwal, Douglas Boyer, Robert Bryant, Ingrid Daubechies, Pawe\l{} D\l{}otko, Kathryn Hess, Lek-Heng Lim, Vidit Nanda, Steve Smale, and Shmuel Weinberger for many inspirational discussions. 


\bibliographystyle{plain}
\bibliography{bib/references}

\end{document}